\newcommand{\CC}{\mathbb{C}}
\newcommand{\NN}{\mathbb{N}}
\newcommand{\Aff}{\mathbb{A}}
\newcommand{\PP}{\mathbb{P}}
\newcommand{\Spec}[1]{\text{Spec\;}#1}
\newcommand{\tensor}{\otimes}
\newcommand{\OO}{\mathcal{O}}
\newcommand{\und}[1]{\underline{#1}}
\newcommand{\Kim}{\mathcal{M}_{\Gamma}^{\textup{log}}(\mathcal{U/B})}
\newcommand{\Li}{\mathcal{M}_{\Gamma}(X,D)}
\newcommand{\Kims}[1]{\mathcal{M}_{#1}^{\textup{log}}(\mathcal{U/B})^{\textup{sim}}}
\newcommand{\Kimc}[1]{\mathcal{M}_{#1}^{\textup{log}}(\mathcal{U/B})^{\sim}}
\theoremstyle{definition}
\newtheorem{definition}{Definition}
\newtheorem{remark}{Remark}
\theoremstyle{plain}
\newtheorem{thm}{Theorem}
\newtheorem{lem}{Lemma}
\newtheorem{cor}{Corollary}
\title{Localization for Logarithmic Stable Maps}
\date{}
\author{S. Molcho}
\address{Department of Mathematics
Campus Box 395 
Boulder, Colorado 80309-0395}
\email{Samouil.Molcho@colorado.edu}
\author{E. Routis}
\address{Brown University, Department of Mathematics,
Box 1917
151 Thayer Street
Providence, RI 02912}
\email{r0utis@math.brown.edu}
\title{Localization for Logarithmic Stable Maps}
\begin{document}
\maketitle
\bibliographystyle{amsalpha}

\begin{abstract}
We prove a virtual localization formula for Bumsig Kim's space of Logarithmic Stable Maps. The formula is closely related and can in fact recover the relative virtual localization formula of Graber-Vakil. 
\end{abstract}

\begin{section}{Introduction and Background}
In his paper \cite{Li}, Jun Li introduced and studied the space of relative stable maps. We recall the setup: fix a pair $(X,D)$ of a smooth variety with a smooth divisor, and discrete data $\Gamma = (g, \vec{c} = (c_1,\cdots, c_h), \beta)$, consisting of the arithmetic genus $g$ of a nodal curve, a vector $\vec{c}$ of integers, and a homology class $\beta$ in $X$. We wish to parametrize stable maps $f:(C,\vec{y},\vec{x}) \rightarrow X$ from a genus $g$ nodal curve $C$ with two sets of marked points $\vec{y}=(y_1,\cdots,y_m)$ and $\vec{x}=(x_1,\cdots,x_h)$ into the variety $X$, whose image lies in the given homology class and with prescribed incidence conditions along the divisor, namely, $f^{-1}(D)= \sum c_ix_i$. \\

The moduli space parametrizing is not proper: a limit of such maps may fail to exist, as in the limit the whole curve may lie entirely into the divisor $D$. Jun Li, following ideas of Li-Ruan \cite{LR} and Ionel-Parker \cite{IP} from symplectic Gromov-Witten theory, gave the following beautiful solution to this issue. When a limit of maps tends to collapse into the divisor, the space $X$ sprouts a new component, which is isomorphic to the projective completion of the normal bundle $\PP(N_{D/X} \oplus 1)$ of $D$ to contain the image, in a manner similar to a blowup. We then require that the prescribed behavior along the divisor does not happen along the original divisor $D$, but rather the divisor at infinity in $\PP(N_{D/X} \oplus 1)$, which we denote $D[1]$. We call $X$ with this new component $X[1]$; we then have a new pair $(X[1],D[1])$ and we may consider stable maps as above to this pair. When a family of maps to $X[1]$ ten
 ds to collapse into $D[1]$, the variety $X[1]$ sprouts a new component that replaces $D[1]$, as above, to create a new space $X[2]$ with a divisor $D[2]$ at infinity, and so forth. In general, a pair $(X[n],D[n])$ is constructed from the pair $(X[n-1],D[n-1])$ inductively. It is called the $n$-th expansion of $(X,D)$. Li's moduli space $\Li$ parametrizes stable maps whose target is allowed to be any of the expansions $(X[n],D[n])$ above, with prescribed behavior along the divisor at infinity $D[n]$, and with a certain compatibility requirement along the divisor $D[k]$, $k<n$: only nodes of the source curve can map to $D[k]$, and when a node maps to $D[k]$, the two components of the curve containing the node have the same contact order with the divisor $D[k]$; this is called the predeformability or ''kissing'' condition. This space is proper and is shown to carry a virtual fundamental class, so one can define in a standard manner a type of Gromov-Witten invariants for $(X,D)$, called \und{relative} Gromov-Witten invariants. For details of the construction, the reader should consult Jun Li's original paper \cite{Li}. \\

Jun Li also considers a variant of this situation, where instead of a pair $(X,D)$ we consider a semistable nodal variety of the form $X=Y_1 \cup_D Y_2$. This means that $X$ is the union of two smooth varieties $Y_i$ along a common smooth divisor $D$ in both of them, that satisfies the following technical condition on the normal bundles: $N_{D/Y_1} \tensor N_{D/Y_2} \cong 1$. Stable maps into $X$ must satisfy a similar predeformability condition as above, and the space is compactified by allowing the targets to vary as before. $X$ maydeform to a target $X[1]$, where $D$ is replaced by $\PP(N_{D/Y_1} \oplus 1) \cong \PP(N_{D/Y_2} \oplus 1)$, with $Y_1$ glued along the $0$ section and $Y_2$ along the infinity section, $X[1]$ may deform to $X[2]$ where the divisor connection $Y_2$ with $\PP(N_{D/X} \oplus 1)$ is replaced by another copy of $\PP(N_{D/X} \oplus 1)$, and so forth. The space $X[n]$ are called the expanded degenerations of $X$. The space of expanded degenerations also carries a virtual fundamental class and one is thus able to extend the notion of Gromov-Witten invariants for targets $X=Y_1 \cup Y_2$, which are mildly singular. These are the correct Gromov-Witten invariants, in the sense that they satisfy deformation invarance: If $W \rightarrow B$ is a family with smooth total space, smooth general fiber and central fiber $X = Y_1 \cup_D Y_2$, the Gromov-Witten invariants of $X$ as defined by Jun Li coincide with the usual Gromov-Witten invariants of the general fiber, at least when such a comparison makes sense, i.e for homology classes restricted from $W$. \\

The relative Gromov-Witten invariants are related to the singular Gromov-Witten invariants by the \und{degeneration} formula. This was also proven by Jun Li and had also been previously considered in the symplectic category in the work of Li-Ruan \cite{LR} and Ionel-Parker \cite{IP}. The degeneration formula allows one to compute Gromov-Witten invariants of expanded degenerations from the relative ones and the combinatorics of the expansions. This can be useful because it is often possible to degenerate a smooth variety into a semistable one with very simple components $Y_i$. Thus one can calculate Gromov-Witten invariants from relative Gromov-Witten invariants of simpler targets. \\

Computations of relative Gromov-Witten invariants can still be difficult, as calculations in Gromov-Witten theory often are, even if the targets are very simple. These calculations can be greatly facilitated by the use of Atiyah-Bott localization. Localization fromulas for the spaces $\Li$ were established by Graber-Vakil in \cite{GV}. The applications of such formulas are far reaching: for example, in \cite{GV}, as applications of the formulas the authors recover the ELSV formula and certain striking results about the tautological ring. \\

Jun Li's constructions are beautiful and geometrically transparent, but suffer from one technical drawback. The virtual fundamental classes defined are hard to work with. The reason for this is that the space of relative stable maps is not an open subset of all maps, but rather, it is locally closed. The perfect obstruction theory used to define the virtual fundamental class is thus constructed by hand and not by standard machinery. This is the main reason the paper \cite{GV} is technically difficult. \\

One way to avoid this issue is to use a different compactification of the space of maps to the pair $(X,D)$ or $Y_1 \cup_D Y_2$, by endowing the sources and targets of all maps with logaritmic structures and requiring that the maps between them are log maps. We will explain this more precisely in the sequel, but here we would like to remark that this idea agrees with a general philosophy in the modern theory of moduli that states that instead of compactifying a moduli space of certain objects, on may try to build the moduli space of such objects with logarithmic structures; since logarithmic structures allow mild singularities, this moduli space is often already proper. The space of logarithmic stable maps was constructed by B.Kim in his paper \cite{Kim}. Kim's space is shown to be an open substack of the space of all logarithmic maps and thus carries a natural virtual fundamental class by restriction, which is simpler than the virtual fundamental class of $\Li$: its formal properties are almost identical to the fundamental class in the classical Gromov-Witten theory of smooth targets. The situation may be summarized pictorially as follows:

\begin{align}
\xymatrix{\ar @{} [dr] |{}
\textrm{log stable maps} \ar[d]_{\textrm{forgetful}} \ar[r]^{\textrm{open}} & \textrm{log maps} \ar[d]^{\textrm{forgetful}} \\
\textrm{rel stable maps} \ar[r]^{\textrm{loc closed}} & \textrm{all maps} }
\tag{1} \label{eq:1}
\end{align}

In this paper, we derive an analogous localization formula for the space $\Kim$ of logarithmic stable maps; the formula is analogous to the formula of \cite{GV} but its derivation is closer in spirit with the proofs of localization in classical Gromov-Witten theory, as in \cite{KTorus},\cite{GP}. Specifically, we show

\newtheorem*{thm:1}{Theorem \ref{thm:1}}
\begin{thm:1}
$\Kim$ is a global quotient stack and admits a $\mathbb{C}^*$ equivariant immersion to a smooth Deligne-Mumford stack. 
\end{thm:1}

\noindent This in particular shows that $\Kim$ admits a localization formula. Then, following the work of Graber-Vakil \cite{GV}, we obtain explicitly that for suitable splittings of the discrete data $\Gamma$ into subsets $\Gamma_1,\Gamma_2$, we have 

\newtheorem*{thm:2}{Theorem \ref{thm:2}}
\begin{thm:2}{Log Virtual Localization}:
\begin{align*}
[\Kim]^{\textup{vir}} = [\Kims{\Gamma}]^{\textup{vir}} + \sum_{\Gamma_1,\Gamma_2} \frac{\prod{\alpha_i}}{d}\frac{[\Kims{\Gamma_1} \times_{D^k} \Kimc{\Gamma_2}]^{\textup{vir}}}{Aut(\Gamma_1,\Gamma_2)(\frac{w-\psi}{d})e(N_{\Gamma_1})}
\end{align*}
\end{thm:2}

\noindent The formula is essentially the same as the relative virtual localization formula of \cite{GV}. The difference is that the stacks of simple relative maps and unrigidified relative stable maps of \cite{GV} are replaced by their logarithmic analogues. These are defined carefully in section 5. In fact, the log virtual localization formula can be used to recover the formula of \cite{GV} - this is our corollary 7. \\

\end{section}

\begin{section}{Logarithmic Stable Maps}

For completeness, we will recall here the necessary definitions and constructions that we will use. For proofs and more detailed explanations the reader should consult Kim's paper \cite{Kim}. \\

A family of $n$-marked prestable curves, $C \rightarrow S$ carries a canonical structure of a logarithmic map, as shown in F. Kato's paper \cite{fK}. The log structures and morphisms are defined as follows. The curve $C \rightarrow S$ corresponds to a diagram

\begin{align*}
\xymatrix{\ar @{} [dr] |{}
C \ar[d] \ar[r] & \mathfrak{C}_{g,n} \ar[d] \\
S \ar[r] & \mathfrak{M}_{g,n} }
\end{align*}
\noindent where $\mathfrak{M}_{g,n}$ and $\mathfrak{C}_{g,n}$ is the moduli stack of $n$-marked prestable curves and its universal family respectively. Both stacks carry natural logarithmic structures: in $\mathfrak{M}_{g,n}$ the log structure is given by the divisor corrseponding to singular curves, and in $\mathfrak{C}_{g,n}$ the log structure is given by the divisors corresponding to singular curves and the markings. The log structures on $C$ and $S$ are the ones pulled back from $\mathfrak{C}_{g,n}$ and $\mathfrak{M}_{g,n}$ respectively; we denote the log structure on $S$ by $N^{C/S}$ and on $C$ by $M^{C/S}$ and refer to them as the canonical log structures. The morphism $C \rightarrow S$ is automatically a log morphism. An explicit description of the log structures when $S = \Spec {k}$ is a geometric point can be given in terms of charts as follows: $N^{C/S}$ has a chart isomorphic to $\NN^{m}$, $m = \textrm{the number of nodes of } C$; $M^{C/S} = N^{C/S}$ at smooth points; $M^{C/S}= N^{C/S} \oplus \NN$ at a marked point; and at a node $M^{C/S}$ is given by the following pushout diagram:

\begin{align*}
\xymatrix{\ar @{} [dr] |{}
\NN \ar[d] \ar[r] & \NN^2 \ar[d] \\
N^{C/S} \ar[r] & M^{C/S} }
\end{align*}

\noindent where the horizontal map is the diagonal and the vertical map is the inclusion corresponding to the appropriate node. \\

\begin{definition} A genus $g$, $n$-marked log curve is a morphism $f:(C,M) \rightarrow (S,N)$ of log schemes such that $C \rightarrow S$ is a family of genus $g$, $n$-marked prestable curves and the morphism $f$ is obtained from a cartesian diagram of the form

\begin{align*}
\xymatrix{\ar @{} [dr] |{}
(C,M) \ar[d] \ar[r] & (C,M^{C/S}) \ar[d] \\
(S,N) \ar[r] & (S,N^{C/S}) }
\end{align*}

\noindent where the horizontal maps are the identities on underlying schemes. Therefore, a log curve is the same thing as the choice of a prestable curve $C \rightarrow S$ and the choice of a homomorphism of log structures $s^{C/S}: N^{C/S} \rightarrow N$. We will reserve the notation $s^{C/S}$ to always refer to this homomorphism, and denote it by $s$ to simplify notation if no confusion may arise. 

\end{definition}

We denote the stack parametrizing log curves by $\mathfrak{M}_{g,n}^{\textup{log}}$. \\

\begin{definition} A log curve $(M,C) \rightarrow (S,N)$ is called \emph{minimal} if the log structure $N$ is locally free and there is no locally free submonoid $N' \subset N$ that contains the image of $N^{C/S}$.
\end{definition}

Here, we call a log structure locally free if around every point it has a chart isomorphic to $\NN^{r}$ for some $r$, possibly depending on the point. For example, over $S = \Spec k$, where $N^{C/S}=\NN^{m}$, all surjections $\NN^{m} \rightarrow \NN^{r}$, $r \le m$ give minimal log curves but no map $\NN^{m} \rightarrow \NN^{r}$, $r>m$.  \\

Minimal log curves are essentially the sources of log stable maps - for a more precise statement, see Definition 5. Next, we discuss the possible targets, which Kim calls ``twisted expanded log Fulton-Macpherson type spaces''. We will refer to them more concisely as log FM spaces. \\

Fix a smooth projective variety $X$.

\begin{definition}
A family of schemes or algebraic spaces $W \rightarrow S$ is called a log FM type space of $X$ if, at every point $s \in S$, there is \'{e}tale locally an \'{e}tale map 

\begin{align*}
W_{\bar{s}} \rightarrow \Spec k(\bar{s})[x,y,z_1,\cdots, z_{r-1}]/(xy)
\end{align*} 

\noindent These families of spaces are required to admit log structures $N^{W/S}$ on $S$ and $M^{W/S}$ on $W$, such that $M^{W/S}$ is given by the cocartesian diagram

\begin{align*}
\xymatrix{\ar @{} [dr] |{}
\NN \ar[d] \ar[r] & \NN^2 \ar[d] \\
N^{W/S} \ar[r] & M^{W/S} }
\end{align*}

\noindent and such that the morphism $W \rightarrow S$ is in fact a log morphism $(W,M^{W/S}) \rightarrow (S,N^{W/S})$. We will further require that $N^{W/S}$ is locally free. Its rank at $s \in S$ equals the number of irreducible components of the singular locus of the fiber $W_s$. We further require that the spaces $W$ come equipped with a map $W \rightarrow X$.

\end{definition}

\begin{definition}
A \emph{twisted expanded} log FM type space of $X$ is a log morphism $(W,M) \rightarrow (S,N)$, where

\begin{itemize}
\item $W \rightarrow S$ is as in Definition 3 above, and all relevant logarithmic data are obtained from a cartesian diagram

\begin{align*}
\xymatrix{\ar @{} [dr] |{}
(W,M) \ar[d] \ar[r] & (W,M^{W/S}) \ar[d] \\
(S,N) \ar[r] & (S,N^{W/S}) }
\end{align*}

\noindent that is, the logarithmic data simply corresponds to a morphism of log structures $t^{W/S}:N^{W/S} \rightarrow N$. We will reserve the notation $t^{W/S}$ to always indicate this morphism and denote it by $t$ to simplify notation when no confusion may arise. \\
\item There is a chart for the morphism $t^{W/S}:N^{W/S} \rightarrow N$ of the form

\begin{align*}
\xymatrix{\ar @{} [dr] |{}
\NN^{m} \ar[d] \ar[r] & \NN^{m}\oplus \NN^{m'} \ar[d] \\
N^{W/S} \ar[r] & N  }
\end{align*}

\noindent Here the top map is of the form $(d,0)$, where $d=(d_1,\cdots,d_m)$ is a diagonal matrix of natural numbers. \\

\end{itemize}

\end{definition}

We will refer to twisted expanded log FM type spaces of $X$ simply as log FM spaces for brevity, contrary to the literature. Having defined both the sources and the targets of log stable maps, we can now give the definition of the central objects of study of this paper. \\

\begin{definition} \label{def5}A log stable map from a log curve $(C,M)/(S,N)$ to an FM space $(W,K)/(S,N)$ is a log morphism $f:(C,M) \rightarrow (W,K)$ over $(S,N)$ such that, over each point $s \in S$

\begin{itemize}
\item The cokernel of the map $N^{W/S}_s \rightarrow N_s$ has rank equal to the number of non-distinguished nodes. \\
\item The map $f^{*}K_s \rightarrow M_s$ is simple at the distinguished nodes. \\
\item Stability: The automorphism group $Aut(f_s)$ is finite.\\
\item The following minimality condition holds: Either $(C,M)/(S,N)$ is a minimal log curve; if not, then $N = \NN \oplus N'$, with $N^{C/S} \rightarrow N'$ minimal and $N^{W/S} \rightarrow \NN$ surjective. 
\end{itemize}

\end{definition}

We explain the terminology: Over each $s\in S$, $C_s$ is a nodal curve and $W_s$ an FM type space. A node of $C_s$ is called distinguished if it maps into the singular locus of $W_s$ and non-distinguished otherwise. A morphism between locally free log structures is called simple if it is given by a diagonal matrix, as in the definition of FM spaces above. An automorphism of $f:(C,M) \rightarrow (W,K)$ over $(S,N)$ is a Cartesian diagram over $(S,N)$ \\

\begin{align*}
\xymatrix{\ar @{} [dr] |{}
(C,M) \ar[d] \ar[r] & (W,K) \ar[d] \\
(C,M) \ar[r] & (W,K)  }
\end{align*}

\noindent that respects the map to $X$, that is, on the level of underlying schemes we have

\begin{align*}
\xymatrix{\ar @{} [dr] |{}
C \ar[d] \ar[r] & W \ar[d] \ar[r] & X \ar[d]^{=} \\
C \ar[r] & W \ar[r] & X}  
\end{align*}

\begin{remark}\label{rmk1} The minimality condition slightly deviates from Kim's definition. In \cite{Kim}, the definition of a log stable map requires that the log curve $(C,M) \rightarrow (S,N)$ is minimal. This neglects the possibility that there is no node in the curve $C$ mapping to the original divisor $D=D[0]$. The typical example of this situation is in $(X,D)=(\PP^2,\PP^1)$, where a line $\PP^1$ rotates to collapse into $D$: the limit map is the map from $\PP^1$ to $X[1]$ that sends all of $\PP^1$ into $X[1]-X$, with the image line intersecting $D[1]$ transversely. There is a more satisfying, intrinsic explanation of the minimality condition. Minimality is a categorical property: Minimal log schemes $(S,N)$ are precisely the log schemes one must restrict to in order to be able to consider a stack over log schemes, such as $\mathcal{M}_{g,n}^{\textup{log}}$ or the stack of all log maps into FM spaces, as a log stack. This is the content of the paper \cite{GMinimality}. Kim's minimal log curves are precisely the minimal objects for the stack $\mathcal{M}_{g,n}^{\textup{log}}$. However, the minimal objects for the stack of all log maps into FM spaces that satisfy the first three properties include the case that no node of the curve maps into $D[0]$, and we must include this case in the definition. Here we also remark that what we call minimal here is the same thing as what is called basic in \cite{AC},\cite{GS}. Both notions correspond to the same categorical notion of minimality.
\end{remark}

Let us describe the logarithmic data concretely in the case when $S = \Spec k$ is a geometric point. This description will be useful in the sequel. The locally free log structures on $S$ is free in this case, described by a chart $\NN^r$ for certain integers $r$; specifically, we have

\begin{align*}
N^{C/S} &= \NN^{m''} \oplus \NN^{m'} \oplus k^{\ast}\\
N^{W/S} &= \NN^{m} \oplus k^{*}\\
N &= \NN^{m} \oplus \NN^{m'} \oplus k^{*}
\end{align*}

\noindent The morphisms $s^{C/S}:=s,t^{W/S}:=t$ are described on the level of characteristic monoids as follows: $\bar{t}:\bar{N}^{W/S} \rightarrow \bar{N}$ is given by a diagonal matrix of the form $(d_1,\cdots,d_m,0,\cdots,0)$, as above. The morphism $\bar{s}:\bar{N}^{C/S} \rightarrow \bar{N}$ is given by a matrix of the form $(\Gamma, id)$, where $\Gamma$ is a generalized diagonal matrix:

\begin{align}
\begin{pmatrix}
\Gamma_{1,1} \cdots \Gamma_{1,k_1} & 0 \cdots 0 & \cdots  & 0 \cdots 0 \\
0 \cdots 0 & \Gamma_{2,1} \cdots \Gamma_{2,k_2} & \cdots  & 0 \cdots 0 \\
\vdots & \vdots & \ddots & \vdots \\
0 \cdots 0 & 0 \cdots 0 & \cdots & \Gamma_{m,1} \cdots \Gamma_{m,k_m}
\label{(1)}
\end{pmatrix}
\end{align}

\noindent The integers $k_1, \cdots, k_m$ add up to $m''$. It is possible that the first row of the matrix is $0$, in which case $d_1=1$. This happens when no node of the curve maps to the original divisor $D=D[0]$. Otherwise, the log curve $(C,M) \rightarrow (S,N)$ is minimal, which means that there is no common divisor between the integers $\Gamma_{i,k_1}, \cdots, \Gamma_{i,k_i}$. For each $\Gamma_{i,j}$, there is an integer such that $d_i = \Gamma_{i,j}l_{i,j}$. In other words, there is a commutative diagram

\begin{align*}
\xymatrix{\ar @{} [dr] |{}
N^{W/S} \ar[d] \ar[r] & N^{C/S} \ar[d] \\
N \ar[r] & N  }
\end{align*}

We will now fix a stack $\mathcal{B}$ of certain log FM type spaces, and denote by $\mathcal{B}^{\textup{etw}}$ the stack whose objects are FM spaces (i.e twisted expanded log FM type spaces) whose underlying spaces are in $\mathcal{B}$. We denote by $\mathcal{U}$ and $\mathcal{U}^{\textup{etw}}$ the universal family of $\mathcal{B}$ and $\mathcal{B}^{\textup{etw}}$ respectively. In other words, we consider spaces $W \in \mathcal{B}$ but endow them with log structures as above. We will consider the stack $\Kim$ of log stable maps to targets in $\mathcal{B}^{\textup{etw}}$. It is proven \cite{Kim} that if the stack $\mathcal{B}$ is algebraic, $\Kim$ is also algebraic.  \\

\begin{remark} Let us at this point explain the connection with Jun Li's original definitions and clarify this concept geometrically. A family of expansions $W \rightarrow S$ of a pair $(X,D)$, or similarily, of a $D$-semistable degeneration $X=Y_1 \cup_D Y_2$ has canonical log structures that determine FM spaces. The canonical log structure on a family of expansions is obtained in a manner formally identical to the way that the canonical log structure on a nodal curve is obtained. A detailed treatment of the canonical log structures on expansions can be found in Olsson's paper \cite{OlssonSemistable}. Briefly, there is an algebraic stack $\mathcal{B}$ parametrizing expansions. The stack $\mathcal{B}$ is in fact the open substack of the stack $\mathfrak{M}_{0,3}$ of $3$ marked, genus $0$ prestable curves where the first two markings are on the first component of the curve and the third marking on the last - see for instance \cite{GV}. In $\mathcal{B}$ there is a normal crossings divisor corresponding to singular expansions; therefore, $\mathcal{B}$ admits a log structure $\mathcal{M}_{\mathcal{B}}$. Similarily, the universal family $\mathcal{U}$ over $\mathcal{B}$ admits a log structure $\mathcal{M}_{\mathcal{U}}$. A family of expansions corresponds to a cartesian diagram

\begin{align*}
\xymatrix{\ar @{} [dr] |{}
W \ar[d] \ar[r]^{u} & \mathcal{U} \ar[d] \\
S \ar[r]_{t} & \mathcal{B} }
\end{align*}

\noindent The pullback log structures $t^{*}\mathcal{M}_{\mathcal{B}}$ and $u^{*}\mathcal{M}_{\mathcal{U}}$ on $S$ and $W$ are what we denoted by $N^{W/S}$ and $M^{W/S}$ above. Therefore, expansions are examples of FM spaces. We may thus consider log stable maps to expansions. The underlying morphism of schemes of such a log stable map is a relative stable map in the sense of Jun Li; the predeformability condition is enforced by the requirement that the map is a map of log schemes. The log structures are thus additional algebraic data on a relative stable map. The log structures encode essential geometric information very conveniently. Suppose for simplicity that $S=\Spec{k}$ is a geometric point. We have seen above the form of the log structures $N^{C/S},N^{W/S},N$ and the maps between them. The rank of $N^{W/S}$, which we denoted by the number $m$ above, indicates that the target is the $m$-th expansion $(X[m],D[m])$ of $(X,D)$. The number $m'$ is the number of nondistinguished nodes. The number $m''$ is the number of distinguished nodes. The matrix $\Gamma$ above indicates that $k_1$ of the distinguished nodes map to the first singular locus in $(X[m],D[m])$ (namely to $D[0])$, $k_2$ map to $D[1]$, and so forth. The contact order of the $j$-th node mapping to the $i$-th singular locus is $l_{i,j}$. Note that once the underlying stable map is fixed, the diagram

\begin{align*}
\xymatrix{\ar @{} [dr] |{}
\bar{N}^{W/S} \ar[d]_{\bar{t}^{W/S}} \ar[r] & \bar{N}^{C/S} \ar[d]^{\bar{s}^{C/S}} \\
\bar{N} \ar[r] & \bar{N}  }
\end{align*}

\noindent between the characteristic monoids of the log structures is determined. This means that in order to determine the full diagram

\begin{align*}
\xymatrix{\ar @{} [dr] |{}
N^{W/S} = \bar{N}^{W/S} \oplus k^{*} \ar[d]_{t^{W/S}} \ar[r] & N^{C/S} = \bar{N}^{C/S} \oplus k^{*} \ar[d]^{s^{C/S}} \\
N = \bar{N} \oplus k^{*} \ar[r] & N  }
\end{align*}

\noindent we need to determine the elements of $k^{*}$ to which the generators of $N^{C/S}$ and $N^{W/S}$ are mapping to. In fact, all generators of $N^{W/S}$ may be chosen to map into $1 \in k^{*}$ after automorphism, so it is enough to treat only $N^{C/S}$; matrix 1 in remark 1 indicates that $s^{C/S}$ has the following form: 

\begin{align*}
e_{ij} \mapsto (\Gamma_{ij} e_i,u_{ij})
\tag{1'} \label{eq:1'}
\end{align*}

\noindent Note however that the units $u_{ij}$ are restricted: they must satisfy the equation $u^{l_{ij}}=1$, in order for the diagram 

\begin{align*}
\xymatrix{\ar @{} [dr] |{}
(C,M) \ar[d] \ar[r]^f & (W,M^W) \ar[ld]\\
(S,N) &  }
\end{align*}

\noindent to commute. This shows that there is a finite number of ways to give to a relative stable map the structure of a log stable map. In other words, if $\Li$ denotes Jun Li's space of expansions and $\Kim$ Kim's space of log stable maps to expansions of $(X,D)$, which is algebraic stack since the stack of expansions $\mathcal{B}$ is algebraic, there is a forgetful morphism

\begin{align*}
\Kim \rightarrow \Li
\end{align*}

For the rigorous definition of the morphism $\pi^\flat:\Kim \rightarrow \Li$ we refer to the work of Gross and Siebert \cite{GS} and the paper \cite{AMW} of Abramovich, Marcus and Wise. This is the left vertical arrow of diagram \ref{eq:1} of the introduction. The fact that relative stable maps are a locally closed substack of the stack of all maps expresses the fact that the predeformability condition is locally closed. The fact that the stack of log stable maps is open in the stack of all log maps expresses the fact that predeformability is enforced by requiring the map from a nodal curve to an expansion be a log map. 
\end{remark}

\end{section}

\begin{section}{Equivariant Embedding}
Since Kim's moduli space does not carry a fundamental class, but rather a virtual fundamental class, in order to prove a localization formula we need to use the virtual localization formula of Graber-Pandharipande \cite{GP}. To use their results, we need to establish the technical condition:

\begin{thm}
There is a locally closed equivariant immersion of $\Kim$ into a smooth Deligne-Mumford stack.
\label{thm:1}
\end{thm}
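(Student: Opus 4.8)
The plan is to adapt to the present logarithmic, expanded setting the classical presentation of a moduli space of stable maps as a global quotient of a quasi-projective scheme (as for $\ov{M}_{g,n}(X,\beta)$; compare \S4 of \cite{GP}), and then to embed such a quotient into a smooth Deligne--Mumford stack in the standard way. First, since $\Kim$ is proper \cite{Kim}, and in particular of finite type, the discrete invariants of a log stable map $f\colon(C,M)/(S,N)\to(W,K)/(S,N)$ -- genus, numbers of marked points and of nodes, the class $\beta$, the expansion level $m$ of the target, and (via Remark \ref{rmk1}) the contact data recorded in the matrices $d$ and $\Gamma$ -- take only finitely many values; fix one such combined type $\theta$ and let $\mathcal{K}_{\theta}\subset\Kim$ be the corresponding open and closed substack. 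It suffices to build a $\CC^{*}$-equivariant locally closed immersion of each $\mathcal{K}_{\theta}$ into a smooth Deligne--Mumford stack and take the disjoint union over $\theta$.

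Next I rigidify the source and the target simultaneously. Fix a $\CC^{*}$-equivariant projective embedding $X\hookrightarrow\PP(V)$ for a linear representation $V$; the induced ``accordion'' action on expansions makes $\mathcal{U}^{\textup{etw}}\to\mathcal{B}^{\textup{etw}}$ equivariant, and over the locus of bounded expansion level this family carries a $\CC^{*}$-linearized line bundle $\mathcal{L}$ that is ample on every component of every fibre -- for instance a suitable twist of the pullback of $\OO_{\PP(V)}(1)$ by the exceptional divisors. For a family in $\mathcal{K}_{\theta}$ over $S$ put $\mathcal{A}:=\omega^{\log}_{\mathcal{C}/S}(\mathcal{D})\tensor(f^{*}\mathcal{L})^{\tensor k}$, with $\mathcal{D}$ the sum of the markings and nodes and $k$ large depending only on $\theta$. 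Each component of a geometric fibre of $\mathcal{C}$ either maps non-constantly to the target, so $f^{*}\mathcal{L}$ has positive degree on it, or is contracted by $f$, in which case finiteness of automorphisms forces at least three special points or positive genus, so $\omega^{\log}(\mathcal{D})$ is positive on it; hence for $k$ large $\mathcal{A}$ is relatively very ample with $R^{1}\pi_{*}\mathcal{A}=0$ and locally constant Euler characteristic $e$, realizing $\mathcal{C}$ as a flat family of curves in $\PP^{e-1}$, and likewise a high power of $\mathcal{L}$ realizes $\mathcal{W}$ as a flat family of closed subschemes of some $\PP^{e'-1}$. The remaining data -- the images of the markings, the graph of $f$, and the logarithmic enhancement, a torsor under $\prod\mu_{l_{ij}}$ by Remark \ref{rmk1} and hence finite -- are algebraic of finite type.

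Assembling this, $\mathcal{K}_{\theta}\iso[Z/\ov{G}]$ with $\ov{G}=PGL_{e}\product PGL_{e'}$ and $Z$ the locally closed subscheme of a product $H$ of Hilbert schemes (of $\PP^{e-1}$, of $\PP^{e'-1}$, and of $\PP^{e-1}\product\PP^{e'-1}$ for the graph), a symmetric product of $\PP^{e-1}$, and a finite scheme, cut out by the condition that the data form a log stable map of type $\theta$; thus $Z$ is quasi-projective of finite type, and since $\Kim$ is Deligne--Mumford \cite{Kim}, $\ov{G}$ acts on $Z$ with finite stabilizers. Every construction above is functorial, hence $\CC^{*}$-equivariant once $\OO_{\PP(V)}(1)$ and $\mathcal{L}$ are linearized, so $H$ and $Z$ carry commuting actions of $\CC^{*}$ and of $GL_{e}\product GL_{e'}$. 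Each factor of $H$ is, by construction, a closed subscheme of the projectivization of a representation of $GL_{e}\product GL_{e'}\product\CC^{*}$ induced from the linear actions on the projective spaces involved, so a Segre embedding gives a $(\CC^{*}\product GL_{e}\product GL_{e'})$-equivariant closed immersion $H\hookrightarrow\PP(W)$, the scalar subgroups acting trivially; restricting to the open $(\CC^{*}\product\ov{G})$-invariant locus $\PP(W)^{\circ}$ on which $\ov{G}$ acts with finite stabilizers -- which contains $Z$ -- we obtain $\mathcal{K}_{\theta}\iso[Z/\ov{G}]\hookrightarrow[\PP(W)^{\circ}/\ov{G}]$, a $\CC^{*}$-equivariant locally closed immersion into a stack that is smooth (quotient of a smooth scheme by a smooth group) and Deligne--Mumford (finite stabilizers, characteristic zero). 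Disjoint union over the finitely many $\theta$ finishes the proof; the same presentation exhibits $\Kim$ as a global quotient stack.

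The crux is the rigidification step: one must check that the logarithmic stability of Definition \ref{def5} is precisely strong enough to make $\mathcal{A}$ relatively very ample \emph{uniformly over $\mathcal{K}_{\theta}$}, and one must produce the fibrewise ample line bundle $\mathcal{L}$ on the stack of expanded targets compatibly with the $\CC^{*}$-action and with the structure of the singular loci. The rest is a routine logarithmic transcription of the construction of $\ov{M}_{g,n}(X,\beta)$ (compare \cite{GP}, \cite{GV}), together with the fact, from \cite{Kim} and \cite{AMW}, that $\Kim$ is a proper Deligne--Mumford stack.
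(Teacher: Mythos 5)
Your route is genuinely different from the paper's, and the difference matters. The paper never rigidifies $\Kim$ directly. Instead it proves two small lemmas: Lemma \ref{lem2}, that the forgetful morphism $\pi\colon\Kim\to\Li$ is finite (by an explicit computation with the log structures $N^{C/S}\to N\leftarrow N^{W/S}$ showing automorphism groups inject and fibers are finite), and a transfer lemma (attributed to Vistoli) showing that if $f\colon X\to Y$ is a $\CC^*$-equivariant finite map of DM stacks and $Y$ satisfies the strong embedding property SE, then so does $X$. The key mechanism there is the resolution property of $[Y/\CC^*]$: since $f$ is finite, $\bar f_*\mathcal{O}_{[X/\CC^*]}$ is coherent, hence a quotient of a locally free sheaf, so $X$ embeds $\CC^*$-equivariantly into a vector bundle over $Y$. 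Since \cite{GV} already proved that $\Li$ satisfies SE, the theorem follows with essentially no new geometry. Your proposal instead tries to re-run the $\ov{M}_{g,n}(X,\beta)$-style rigidification from scratch on $\Kim$ itself, embedding both the source and the expanded target into projective space and quotienting by $PGL_e\times PGL_{e'}$.

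The difficulty is exactly where you yourself flag it, and it is larger than a crux to be checked. The assertion that the universal expanded target $\mathcal{U}^{\textup{etw}}\to\mathcal{B}^{\textup{etw}}$, over the locus of bounded expansion level, carries a $\CC^*$-linearized line bundle $\mathcal{L}$ that is ample on every component of every fibre is not a routine consequence of fixing $X\hookrightarrow\PP(V)$: the pullback of $\mathcal{O}_{\PP(V)}(1)$ is trivial on the expanded bubbles (they are contracted to $D$), so one must correct by exceptional divisors over a base that is a stack, keep the result compatible with the iterated degeneration structure, and linearize the $\CC^*$-action that rotates the bubbles. That construction, together with the related work of producing a quasi-projective atlas on which the stabilizers are reduced and finite, is essentially the content of the SE property for $\Li$ in \cite{GV}; it is a substantial technical theorem, not a remark. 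Likewise ``the open locus of $\PP(W)$ on which $\ov{G}$ acts with finite stabilizers contains $Z$'' is asserted rather than shown, and the handling of the log enhancement as a mere torsor under $\prod\mu_{l_{ij}}$ glosses over the identifications by automorphism (the $u_{ij}$ are only well defined modulo the $\mu_d$-action discussed in Remark 2), which is precisely the finiteness statement in Lemma \ref{lem2}. So the proposal, as written, re-derives neither the fibrewise ampleness nor the finiteness, and these are the two facts the paper's proof is organized around. The paper's approach is genuinely shorter: it reduces the problem to (a) the already-known SE property for $\Li$ and (b) a general lemma whose proof is an application of the resolution property, at the cost of having to establish finiteness of $\Kim\to\Li$, which is a local log-structure calculation. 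If you want to pursue the direct rigidification, you should at minimum cite the construction of the relatively ample line bundle on the universal expansion and its $\CC^*$-linearization from \cite{GV} rather than treating it as constructible by inspection.
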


We will do this by proving that $\Kim$ satisfies a slightly stronger condition, also shared by Jun Li's space $\Li$. Namely we will prove that $\Kim$ satisfies the following property, which we will abbreviate as property SE (for strong embedding property): \\

\begin{itemize}
\item $\Kim = [V/G]$ is a global quotient, where $G$ is a reductive group, $V$ is a locally closed subset of a smooth projective $W$ with an action of $G$ extending that of $V$. \\
\item There is a $\CC^{*} \times G$ action on $W$ which descends to the $\CC^{*}$ action on $[V/G]$ \\
\end{itemize}

In \cite{GV}, it is shown that $\Li$ satisfies SE by an explicit construction. We have seen there is a morphism $\Kim \rightarrow \Li$; this morphism is in fact finite, as shown in lemma 2 below. Therefore, it suffices to show the following lemma; the idea of the proof is due to Vistoli.\\

\begin{lem} Suppose $f:X \rightarrow Y$ is a $\CC^{*}$ equivariant finite morphism between Deligne-Mumford stacks, and assume that $Y$ satisfies SE. Then $X$ satisfies SE as well, and thus embeds $\CC^{*}$ equivariantly into a smooth Deligne-Mumford stack.
\end{lem}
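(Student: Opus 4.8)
The plan is to reduce the statement for $X$ to the corresponding statement for $Y$ by pulling back the quotient presentation along $f$, using the finiteness of $f$ to control the resulting objects. Write $Y = [V/G]$ with $G$ reductive, $V \subset W$ locally closed in a smooth projective $W$ carrying a $\CC^* \times G$ action that restricts on $V$ and descends to the $\CC^*$-action on $[V/G] = Y$. Let $P \to Y$ be the principal $G$-bundle corresponding to this presentation, so that $P$ is an algebraic space with a $G$-action and $V = P$ as $G$-spaces (more precisely $P$ is the pullback of the atlas $V \to [V/G]$). Since $f: X \to Y$ is representable (being a morphism of Deligne--Mumford stacks that is finite, hence in particular representable), the fiber product $\widetilde{X} := X \times_Y P$ is an algebraic space, it carries a $G$-action with $X = [\widetilde{X}/G]$, and the projection $\widetilde{X} \to P = V$ is finite because it is a base change of $f$. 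Moreover $\widetilde X$ inherits a $\CC^* \times G$-action: the $\CC^*$-action on $X$ and on $Y$ lifts compatibly because $P \to Y$ is the atlas and the $\CC^*$-action on $Y$ is covered by the $\CC^* \times G$-action on $V$; one checks the two $G$- and $\CC^*$-actions on $\widetilde X$ commute and that the $G$-quotient recovers the $\CC^*$-action on $X$.

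Next I would promote the finite morphism $\widetilde X \to V$ to a closed embedding into a projective bundle over $W$. This is the step where Vistoli's idea enters: a finite morphism of (quasi-projective, or suitably nice) schemes or algebraic spaces $g: \widetilde X \to V$ is, locally on $V$, given by a sheaf of algebras $g_* \OO_{\widetilde X}$ which is a coherent $\OO_V$-module, hence a quotient of $\OO_V^{\oplus N}$ for some $N$; globally, after twisting by an ample line bundle pulled back from $W$, one realizes $\widetilde X$ as a closed subscheme of $\PP(\mathcal E)$ for a vector bundle $\mathcal E$ on $V$, or more simply of $V \times \PP^N$ once one has arranged the generation of the algebra $g_*\OO_{\widetilde X}$ by global sections. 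The key point is to perform this construction $G$-equivariantly and $\CC^*$-equivariantly: since $G$ is reductive and acts on $V$ with $V$ locally closed in the projective $W$, one can average / take $G$-invariants to choose the generating sections $G$-equivariantly, and one can further enlarge $N$ and use that $\CC^*$ is linearly reductive to make the embedding $\CC^* \times G$-equivariant, with $\CC^*$ acting on the $\PP^N$ factor through a linear representation. Then $\widetilde X$ becomes a locally closed $\CC^* \times G$-subspace of $W \times \PP^N$ (locally closed because $V$ is locally closed in $W$ and $\widetilde X$ is closed in $(V \times \PP^N) \subset (W \times \PP^N)$ restricted appropriately), and $W \times \PP^N$ is smooth projective with a $\CC^* \times G$-action extending the action on $\widetilde X$. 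Setting $W' = W \times \PP^N$ and $V' = \widetilde X$ gives exactly property SE for $X = [\widetilde X/G]$, and the final clause — that $X$ embeds $\CC^*$-equivariantly into a smooth Deligne--Mumford stack — follows by taking an open $\CC^* \times G$-invariant neighborhood $W''$ of $V'$ in $W'$ on which $G$ acts with finite stabilizers (such a neighborhood exists because stabilizers are finite on $V' = \widetilde X$, as $X$ is Deligne--Mumford, and the locus of finite stabilizers is open), so that $[W''/G]$ is a smooth Deligne--Mumford stack receiving a $\CC^*$-equivariant locally closed immersion from $[V'/G] = X$.

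The main obstacle is the equivariance of the embedding $\widetilde X \hookrightarrow V \times \PP^N$, and in particular threading the $\CC^*$-action through simultaneously with the $G$-action. The existence of \emph{some} closed embedding of a finite morphism over a quasi-projective base into a trivial projective bundle is classical; making it $G$-equivariant is where reductivity of $G$ is used, via the standard trick of taking the $G$-span of a generating set of sections and then a $G$-invariant complement — but one must be careful that $V$ itself is only locally closed, not affine, so the ``sheaf of algebras is globally generated after twisting'' argument must be run using an ample line bundle on the ambient projective $W$ together with the $G$-linearization of that line bundle (which exists, again by reductivity, possibly after replacing it by a tensor power). Layering the $\CC^*$-action on top requires that the linear representation on $\CC^{N+1}$ be chosen $\CC^*$-stable; since $\CC^*$ is linearly reductive this is formal once one checks that the relevant space of sections is a $\CC^* \times G$-representation, which it is because everything in sight — $V$, the ample bundle, the algebra $g_*\OO_{\widetilde X}$ — is $\CC^* \times G$-equivariant by construction. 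A secondary, more bookkeeping-level obstacle is verifying that the $\CC^*$-action one builds on $\widetilde X = X \times_Y P$ genuinely descends to the given $\CC^*$-action on $X$ and not to some twist of it; this is checked by noting that $P \to Y$ is $\CC^*$-equivariant for the $\CC^*$-action on $P$ induced from $V$, so the two descents agree on the level of stacks.
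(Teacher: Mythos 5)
Your proposal is correct and follows essentially the same route as the paper: both proofs exploit finiteness of $f$ to reduce to equivariantly resolving the coherent pushforward algebra by a $\CC^{*}\times G$-stable, finite-dimensional space of sections of twists of a linearized ample bundle (you do this for $g_{*}\OO_{\widetilde X}$ on the atlas $V$; the paper packages the identical step as the resolution property of $[Y/\CC^{*}]$ applied to $\bar f_{*}\OO_{[X/\CC^{*}]}$), and both then obtain SE by embedding into a projectivized equivariant bundle over the projective ambient variety, with the open locus of finite stabilizers supplying the smooth Deligne--Mumford target. The only differences are bookkeeping (working with $\CC^{*}\times G$-equivariant sheaves on $V$ versus sheaves on the Romagny quotient stack) and some mildly misattributed hypotheses on your side (the finite-dimensional invariant generating subspace and the linearization of a power of the ample bundle come from the GIT lemma and normality of $W$, not from reductivity), which do not affect the argument.
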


\begin{proof} Since $Y$ satisfies SE, we may write $Y=[V/G] \subset [W/G]$ with $V \subset W$ a $\CC^* \times G$ equivariant locally closed subset and $W$ a smooth projective variety. We can find a $\CC^* \times G$ equivariant open smooth subvariety $W_{o} \subset W$ such that:
\begin{enumerate}
\item$G$ acts on $W_0$ with reduced finite stabilizers
\item $V$ is a closed subvariety of $W_0$.
\end{enumerate}

(Note that this is clearly possible since $[V/G]$ is Deligne Mumford by the hypothesis, or equivalently $G$ acts on $V$ with finite reduced stabilizers.) The composed morphism $f:X \rightarrow Y \rightarrow [W_{o}/G]$ is then still $\CC^*$ equivariant and finite, so by replacing $V$ with $W_{o}$ we may assume $V$ is smooth. Since the morphism $X \rightarrow Y$ is $\CC^{*}$ equivariant, we obtain a morphism $[X/\CC^{*}] \rightarrow [Y/\CC^{*}]$. Here $[X/\CC^{*}]$ denotes the stack quotient in the sense of Romagny \cite{R}, though our notation is slightly different. Assume for a moment that $[Y/\CC^{*}]$ has the resolution property.  Then, the sheaf $\bar{f}_{*}(\mathcal{O}_{[X/\CC^{*}]})$, which is coherent since $f$ is finite, is the quotient of a locally free sheaf $\mathcal{E}$ on $[Y/\CC^{*}]$. In other words, $[X/\CC^{*}]$ embeds into a vector bundle over $[Y/\CC^{*}]$. That is, there is a $\CC^{*}$ equivariant morphism of $X$ into a $\CC^{*}$ equivariant vector bundle over $Y=[V/G]$, hence a $\CC^{*}$ equivariant  embedding of $X$ into a smooth DM stack. In particular, $X$ admits a $\CC^{*}$ equivariant embedding into a stack of the form $[U/G]$, where $U$ is a smooth vector bundle over $V$, so it is, in fact, a quotient. To see that $X$ satisfies SE, then, the only thing that remains to be shown is that $U$ embeds $\CC^* \times G$ equivariantly into a smooth and projective variety as a locally closed subset. We give the proof after proving the resolution property for $[Y/\CC^*]$, as it requires essentially the same argument.\\

We thus show that $[Y/\CC^{*}]$=$[[V/G]/\CC^{*}]$ does, indeed, have the resolution property. Equivalently, we prove that any $\CC^{*}$ equivariant coherent sheaf $\mathcal{F}$ on $[V/G]$  admits a $\CC^{*}$ equivariant surjection from a $\CC^{*}$ equivariant locally free sheaf.\\ 

Let $p:V\rightarrow [V/G]$ be the projection. Since we assumed above that $V$ is smooth and quasiprojective, by  \cite[Corollary 1.6]{GIT}, we can find a $G\times\CC^*$ immersion $i:V\rightarrow \PP^n$. Let $\mathcal {O}_V(1)=i^*\mathcal{O}_{\PP^n}(1)$ and consider a $\CC^*$ equivariant coherent sheaf $\mathcal {F}$ on $[V/G]$. Since the $G\times \CC^*$ action on $V$ descends to the $\CC^*$ action on the quotient $[V/G]$, the pullback $p^*\mathcal{F}$ is $G\times \CC^*$ equivariant. It is also coherent. Therefore, since $V$ is quasiprojective, we may pick a large $N$ so that the twisted sheaf $p^*\mathcal{F}(N)$ is generated by global sections, say $s_1, s_2, \cdots s_m \in H^{0}(V,p^*\mathcal{F}(N))$. Let $V_1=<{s_1,s_2,\dots, s_m}>$ be their linear span in $H^0(V,p^*\mathcal{F}(N))$. Following the argument in the proof of \cite[ pp.25-26, Lemma $^*$]{GIT}, we can find \\
\begin{center}
$V_1\subset V_2\subset H^{0}(V,p^*\mathcal{F}(N))$
\end{center}
\noindent where $V_2$ is a $\CC^{*} \times G$ equivariant finitely generated subspace. We therefore obtain a natural $\CC^* \times G$ equivariant surjection\\
~\\
\begin{center}
$V_2 \rightarrow p^*\mathcal{F}(N) $
\end{center}
~\\
\noindent hence a natural $\CC^* \times G$ equivariant surjection
~\\
\begin{center}
$V_2 (-N) \rightarrow p^*\mathcal{F}$
\end{center}
~\\
\noindent which  descends to a $\CC^*$ equivariant surjection from a $\CC^*$ equivariant locally free sheaf on $[V/G]$ to $\mathcal{F}$ . This shows that $[Y/\CC^*]$ has the resolution property. Note that this argument also suffices to complete the proof that $X$ satisfies SE: we have already seen that $X$ embeds into a vector bundle $[U/G]$; the argument just given shows that $U$ admits a $\CC^* \times G$ equivariant surjection $(\mathcal{O}_V(-M))^k \rightarrow U$ for some integers $M$,$k$. Therefore, $[U/G]$ admits a $\CC^*$ equivariant locally closed immersion to the projectivized bundle $\PP(\mathcal{O}_{\PP^n}(-M)^k \oplus \mathcal{O}_{\PP^n})$ over ${\PP}^n$, that is, a smooth projecive variety.  

\end{proof}

\begin{lem}\label{lem2}
The morphism $\Kim \rightarrow \Li$ is finite. 
\end{lem}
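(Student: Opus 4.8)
The plan is to establish the two defining properties of a finite morphism separately: properness and quasi-finiteness. Since $\Kim$ and $\Li$ are both finite type algebraic stacks, $\pi^\flat\colon\Kim\to\Li$ is automatically of finite type, and a proper, quasi-finite morphism of Noetherian Deligne-Mumford stacks is finite; so these two properties suffice.

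For properness I would argue formally rather than through the valuative criterion. Jun Li \cite{Li} proves that $\Li$ is a proper, hence separated, Deligne-Mumford stack, and Kim \cite{Kim} proves the same for $\Kim$. Since the target is separated, the graph of $\pi^\flat$ is a closed immersion $\Kim\hookrightarrow\Kim\times\Li$, and composing it with the projection $\Kim\times\Li\to\Li$ --- which is proper because $\Kim$ is --- exhibits $\pi^\flat$ as a composite of proper morphisms. One could instead run the valuative criterion directly, extending a log structure over a trait from its generic fibre, but that would only re-prove part of Kim's properness theorem.

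For quasi-finiteness it suffices to bound the geometric fibres. A geometric point of $\Li$ is a relative stable map $f$ over $\Spec k$, and the fibre of $\pi^\flat$ over it parametrizes the minimal log stable maps whose underlying relative stable map is $f$. Here I would use the explicit local description of the logarithmic data recalled in Section 2: once $f$ is fixed, its combinatorial (tropical) type is determined, and with it the characteristic monoids $\bar N^{C/S}$, $\bar N^{W/S}$, $\bar N$ and the maps $\bar s^{C/S}$, $\bar t^{W/S}$ between them. The only remaining freedom in upgrading $f$ to a log stable map is the lift of the generators to the honest monoids; normalizing the generators of $N^{W/S}$ to map to $1\in k^{*}$, this amounts to a choice of units in the assignment $e_{ij}\mapsto(\Gamma_{ij}e_i,u_{ij})$, and these units are constrained by $u_{ij}^{l_{ij}}=1$ for the square with the target to commute --- a finite set. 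Hence the fibre is a finite groupoid, each point of $\Kim$ is isolated in its fibre over $\Li$, and $\pi^\flat$ is quasi-finite; together with properness this gives finiteness.

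I expect the fibre computation to be routine, as it is essentially carried out already in Section 2. The one point that needs care is that $\pi^\flat$ need not be representable --- the finite groups $\prod_{i,j}\mu_{l_{ij}}$ act on the set of log enhancements of a fixed relative stable map and can contribute genuine automorphisms --- so one should invoke ``proper $+$ quasi-finite $\Rightarrow$ finite'' in a form valid for possibly non-representable morphisms of Deligne-Mumford stacks, for instance by descending the statement to coarse moduli spaces and using that separated Deligne-Mumford stacks have finite inertia.
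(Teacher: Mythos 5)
There is a genuine gap, and it sits exactly where the paper's proof does its real work. Your properness argument (both stacks are proper, so $\pi^\flat$ is) and your fibre count (the units $u_{ij}$ with $u_{ij}^{l_{ij}}=1$ give finitely many log enhancements of a fixed relative stable map $\und{f}$; this is the Section 2 discussion / Kim's Remark 6.3.1) are fine and match the paper's treatment of quasi-finiteness. But a finite morphism of stacks is by definition representable, and your proposal not only omits a proof of representability, it asserts the opposite: that $\prod_{i,j}\mu_{l_{ij}}$ ``can contribute genuine automorphisms'' and that one can get by with a version of ``proper $+$ quasi-finite $\Rightarrow$ finite'' valid for non-representable morphisms. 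No such statement is true: $B(\mathbb{Z}/n)\rightarrow \Spec k$ is proper and quasi-finite with finite coarse space, yet not finite, so passing to coarse moduli spaces cannot rescue the argument. If $\pi^\flat$ really had nontrivial automorphisms in its fibres, the lemma as stated would simply be false (and Lemma 1, which uses finiteness to realize $\bar{f}_{*}\mathcal{O}$ as a coherent quotient of a vector bundle, would not apply).

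The missing step is precisely what occupies the bulk of the paper's proof: showing that for every log enhancement $f$ of $\und{f}$ the map $Aut(f)\rightarrow Aut(\und{f})$ is injective, i.e.\ that an automorphism $A$ of the log structure $N$ compatible with both $s^{C/S}$ and $t^{W/S}$ and lying over the identity of $\und{f}$ is trivial. This is where the minimality condition in Definition \ref{def5} enters: node by node of the target, $A$ sends $e\mapsto(e,v)$, compatibility with $s$ forces $v^{\Gamma_i}=1$ for all distinguished nodes, and minimality (gcd of the $\Gamma_i$ equal to $1$, or $d=1$ when no node maps to the relevant divisor) forces $v=1$; the roots of unity you mention parametrize \emph{distinct} preimages of $\und{f}$, not automorphisms of a single one. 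So you have conflated the finiteness of the fibre with the stabilizers of its points; without the injectivity argument your proof establishes at best that $\pi^\flat$ is proper and quasi-finite, not that it is finite.
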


\begin{proof} We need to show that every geometric point of $\Li$, that is, every relative stable map $\und{f}$, has a finite number of preimages, and that the morphism is representable: for each preimage $f$, the map $Aut(f) \rightarrow Aut(\und{f})$ is injective. The discussion in section 2, or Remark 6.3.1 in \cite{Kim} establishes the finiteness of the preimages. Let then $f$ be a log stable map lying over $\und{f}$. Denote the base by $(S,N)$, where $S=\Spec k$, the source curve by $C$, and the target by $W=X[m'']$. Let $m,m'$ be the number of distinguished and non-distinguished nodes of $f$ respectively, and $m''$ the number of nodes of $W$. We have $N^{C/S} = \NN^m \oplus \NN^{m'} \oplus k^{*}$, $N^{W/S} = \NN^{m''} \oplus k^{*}$, $N = \NN^{m''} \oplus \NN^{m'} \oplus k^{*}$. An automorphism of $f$ lying over the identity automorphism of $\und{f}$ is simply an automorphism of the logarithmic structures, that is, an automorphism $A$ of $N$ that respects the maps $s:=s^{C/S}:N^{C/S} \rightarrow N$, $t:=t^{W/S}:N^{W/S} \rightarrow N$. In other words, we are looking for commutative diagrams

\begin{align*}
\xymatrix{\ar @{} [ldr] |{}
\NN^m \oplus \NN^{m'} \oplus k^{*} \ar[r]^s \ar[d]_{=} & \NN^{m''} \oplus \NN^{m'} \oplus k^{*} \ar[d]^{A} \\
\NN^m \oplus \NN^{m'} \oplus k^{*} \ar[r]_s& \NN^{m''} \oplus \NN^{m'} \oplus k^{*}}, 
\xymatrix{\ar @{} [ldr] |{}
\NN^{m''} \oplus k^{*} \ar[r]^-t \ar[d]_{=} & \NN^{m''} \oplus \NN^{m'} \oplus k^{*} \ar[d]^{A} \\
\NN^{m''} \oplus k^{*} \ar[r]_-t& \NN^{m''} \oplus \NN^{m'} \oplus k^{*}}, 
\end{align*}

\noindent The induced matrix $\bar{s}$ on the level of characteristic monoids is a generalized diagonal matrix which is the identity on the last $m'$ components; and similarily $\bar{t}$ is a diagonal matrix with finite cokernel on the first $m$ components. Therefore $\bar{A}$ must be a diagonal matrix as well - in fact, the identity on characteristic monoids. It follows that each factor of $\NN^{m''}$ contributes to automorphisms separately, and we may thus assume $m''=1$ without loss of generality. In other words, the automorphism group of $N$ splits as a product of the automorphism groups contributed by each node of the target. We are thus reduced to studying three cases: (a) either there are $m$ distinguished nodes mapping to the node of $W$; (b) there is a node in $W$ but no node in $C$, i.e $m=m'=0$ (cf the minimality condition of definition $5$); (c) or, there is one non-distinguished node and no distinguished node, i.e $m=0,m'=1$, since every non-distinguished node contributes precisely one factor of $\NN$ in $N$. The first case is most interesting. In this case, the two diagrams take the form

\begin{align*}
\xymatrix{\ar @{} [ldr] |{}
\NN^m  \oplus k^{*} \ar[r]^s \ar[d]_{=} & \NN \oplus k^{*} \ar[d]^{A} \\
\NN^m  \oplus k^{*} \ar[r]_s& \NN \oplus k^{*}}, 
\xymatrix{\ar @{} [ldr] |{}
\NN \oplus k^{*} \ar[r]^-t \ar[d]_{=} & \NN \oplus k^{*} \ar[d]^{A} \\
\NN \oplus k^{*} \ar[r]_-t& \NN \oplus k^{*}}, 
\end{align*}

\noindent These are homomorphisms of log structures, thus lie over the map to the field $k$, and the factor of $k^*$ maps identically to itself. The automorphism $A$ is determined by its action on the generator $e$ of $\NN$ and thus takes the form $e \mapsto (e,v)$ for a unit $v \in k^*$. On the other hand, we have seen in formula \ref{eq:1'} that 

\begin{align*}
e_i \mapsto (\Gamma_ie,u_i)
\end{align*}

\noindent and thus commutativity of the diagram implies $(\Gamma_ie,u_iv^{\Gamma_i}) = (\Gamma_ie,u_i)$, that is, $v^{\Gamma_i}=1$ for all $i$. By minimality, the greatest common divisor of the $\Gamma_i$ is $1$, and thus $v=1$. \\

In the case (b), the diagrams become  

\begin{align*}
\xymatrix{\ar @{} [ldr] |{}
k^{*} \ar[r]^-{id} \ar[d]_{=} & \NN \oplus k^{*} \ar[d]^{A} \\
k^{*} \ar[r]_-{id}& \NN \oplus k^{*}}, 
\xymatrix{\ar @{} [ldr] |{}
\NN \oplus k^{*} \ar[r]^t \ar[d]_{=} & \NN \oplus k^{*} \ar[d]^{A} \\
\NN \oplus k^{*} \ar[r]_t& \NN \oplus k^{*}}, 
\end{align*}

\noindent and the map $t$ is of the form $e \mapsto (de,1)$. However, minimality requires that $d=1$. Hence, $A$, which has the form $e \mapsto (e,v)$ must have $v=1$ and is thus trivial. Finally, in case (c) the diagrams become 

\begin{align*}
\xymatrix{\ar @{} [ldr] |{}
\NN \oplus k^{*} \ar[r]^-{s} \ar[d]_{=} & \NN \oplus k^{*} \ar[d]^{A} \\
\NN \oplus k^{*} \ar[r]_-{s}& \NN \oplus k^{*}}, 
\xymatrix{\ar @{} [ldr] |{}
k^{*} \ar[r]^-{id} \ar[d]_{=} & \NN \oplus k^{*} \ar[d]^{A} \\
k^{*} \ar[r]_-{id}& \NN \oplus k^{*}}, 
\end{align*}

\noindent and $s$ is the identity on the level of characteristics, so $A$ is trivial as well, by the same argument as in (b).

\end{proof}
 
From the two lemmas it follows that the localization formula of \cite{GP} can be applied to $\Kim$. \\

\end{section}

\begin{section}{The Obstruction Theory}
In this section we analyze the obstruction theory of $\Kim$. We will write $M$ for $\Kim$ to ease the notation. There is a forgetful morphism $\tau: M \rightarrow \mathcal{MB}$,  where

\begin{align*}
\mathcal{MB} = \mathfrak{M}_{g,n}^{\textup{log}} \times_{\textup{LOG}} \mathcal{B}^{\textup{etw}}
\end{align*}

Here, the category $\textup{LOG}$ is the category whose objects are log schemes and morphisms are strict log morphisms. The stack $\mathcal{MB}$ parametrizes, over a scheme $S$, pairs $(C,M)/(S,N)$ and $(W,K)/(S,N)$ of a $n$-marked log curve over $(S,N)$ and a FM space in $\mathcal{B}^{\textup{etw}}$ over the same log scheme $(S,N)$. The morphism $\tau$ sends a log stable map to the pair consisting of the source of the map and the target; it is the forgetful morphism forgetting the data of the map. The stack $\mathcal{MB}$ is the analogue of the Artin stack $\mathfrak{M}$ of prestable curves in ordinary Gromov-Witten theory. It is also smooth and, in fact, log smooth, as it further has the structure of a log stack. \\

By standard properties of the cotangent complex, the morphism $\tau$ induces a distinguished triangle

\begin{align*}
\tau^{-1} \mathcal{L}_{\mathcal{MB}} \rightarrow \mathcal{L}_M \rightarrow \mathcal{L}_{M/\mathcal{MB}} \rightarrow
\end{align*}

Consider the diagram

\begin{align*}
\xymatrix{\ar @{} [dr] |{}
\mathcal{C}^{\textrm{univ}} \ar[d]_{\pi} \ar[r]^{f} & \mathcal{U}^{\textrm{etw}} \ar[d] \\
M \ar[r] & \mathcal{B}^{\textrm{etw}} }
\end{align*}

\noindent Here $\mathcal{C}^{\textrm{univ}}$ is the universal family of $M$ and $\mathcal{U}^{\textrm{etw}}$ the universal family over $\mathcal{B}^{\textrm{etw}}$. The morphism $f$ is the evaluation map and $\pi$ the projection. It is proven in section 7 of \cite{Kim} that there is a canonical morphism

\begin{align*}
E^{\bullet} = (R\pi_{*}f^{*}T_{\mathcal{U}^{\textrm{etw}}/\mathcal{B}^{\textrm{etw}}}^{\textup{log}})^{\vee} \rightarrow \mathcal{L}^{\log}_{M/\mathcal{MB}}
\end{align*}

\noindent which is a (relative) perfect obstruction theory. The morphism $\tau$ is strict, therefore the log cotangent complex $\mathcal{L}^{\log}_{M/\mathcal{MB}}$ coincides with the ordinary cotangent complex $\mathcal{L}_{M/\mathcal{MB}}$. Furthermore, the stack $\mathcal{MB}$ is smooth, therefore $\mathcal{L}_{\mathcal{MB}}$ is a two-term complex concentrated in degrees $0$ and $1$ (it is an Artin stack, so it has automorphisms). We therefore have a diagram

\begin{align*}
\xymatrix{\ar @{} [dr] |{}
\mathcal{L}_{M/\mathcal{MB}}[-1] \ar[r] & \tau^{-1} \mathcal{L}_{\mathcal{MB}} \ar[r] & \mathcal{L}_M \ar[r] & \mathcal{L}_{M/\mathcal{MB}}  \\
E^{\bullet}[-1] \ar[u] \ar[r] & \tau^{-1} \mathcal{L}_{\mathcal{MB}} \ar[u] &  & E^{\bullet} \ar[u] }
\end{align*}

We may fill in the lower row by the cone of $E^{\bullet}[-1] \rightarrow \tau^{-1} \mathcal{L}_{\mathcal{MB}}$ to obtain an (absolute) perfect obstruction theory for $M$. Applying the functor $RHom(\bullet,\mathcal{O}_{M})$, we obtain the following lemma:\\

\begin{lem} Over a geometric point $f:(C,M_C)/(k,N) \rightarrow (W,M_W)/(k,N)$, the tangent space $\mathcal{T}^1$ and obstruction space $\mathcal{T}^2$ of $M$ fit into an exact sequence
\begin{align*}
0 \rightarrow \mathfrak{aut}(\tau(f)) \rightarrow H^{0} (f^{*}T^{\log}_W) \rightarrow \mathcal{T}^1 \rightarrow \textrm{Def}(\tau(f)) \rightarrow H^{1}(f^{*}T^{\log}_W) \rightarrow \mathcal{T}^2 \rightarrow 0.
\end{align*}
\end{lem}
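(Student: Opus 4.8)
The plan is to obtain the stated six-term exact sequence purely by applying $R\Hom(-,\OO_M)$ to the absolute perfect obstruction theory $F^\bullet \to \mathcal{L}_M$ constructed just above the lemma, where $F^\bullet$ is the cone of $E^\bullet[-1] \to \tau^{-1}\mathcal{L}_{\mathcal{MB}}$. By definition, $\mathcal{T}^1 = \textup{Ext}^1(F^\bullet,\OO_M)$ and $\mathcal{T}^2 = \textup{Ext}^2(F^\bullet,\OO_M)$, evaluated at the geometric point $f$. First I would record the two inputs. On one hand, the distinguished triangle $E^\bullet[-1] \to \tau^{-1}\mathcal{L}_{\mathcal{MB}} \to F^\bullet \to E^\bullet$ gives, after $R\Hom(-,\OO_M)$ and passing to cohomology at $f$, a long exact sequence relating $\textup{Ext}^i(E^\bullet,\OO)$, $\textup{Ext}^i(F^\bullet,\OO)$ and $\textup{Ext}^i(\tau^{-1}\mathcal{L}_{\mathcal{MB}},\OO)$. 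On the other hand, since $\tau$ is strict the relative obstruction theory is $E^\bullet = (R\pi_* f^* T^{\log}_{\mathcal{U}^{\textrm{etw}}/\mathcal{B}^{\textrm{etw}}})^\vee$, so by Grothendieck–Serre duality on the curve $\pi$ its dual's hypercohomology at $f$ is exactly $H^\bullet(C, f^* T^{\log}_W)$; concretely $\textup{Ext}^0(E^\bullet,\OO) = H^0(f^*T^{\log}_W)$, $\textup{Ext}^1(E^\bullet,\OO) = H^1(f^*T^{\log}_W)$, and these vanish outside degrees $0,1$ because $C$ is a curve. Here $T^{\log}_W$ denotes the log tangent sheaf pulled back along the log morphism; the key point, already invoked in the text, is that the relative log cotangent complex agrees with the ordinary one because $\tau$ is strict.

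Next I would identify the $\mathcal{MB}$ terms. Since $\mathcal{MB}$ is smooth (indeed log smooth) and is an Artin stack, $\mathcal{L}_{\mathcal{MB}}$ is perfect in degrees $[-1,0]$, so $\tau^{-1}\mathcal{L}_{\mathcal{MB}}$ is perfect in amplitude $[-1,0]$ and hence $\textup{Ext}^i(\tau^{-1}\mathcal{L}_{\mathcal{MB}},\OO_M)$ is concentrated in degrees $0$ and $1$. Evaluated at the point $f$ lying over $\tau(f)$, these are by definition the infinitesimal automorphisms and deformations of the object $\tau(f)$ of $\mathcal{MB}$, i.e. $\textup{Ext}^0 = \mathfrak{aut}(\tau(f))$ and $\textup{Ext}^1 = \textup{Def}(\tau(f))$ — this is just the standard deformation-theoretic interpretation of the (truncated) cotangent complex of an algebraic stack at a point, and there is no higher obstruction because $\mathcal{MB}$ is smooth. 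Then I assemble: the long exact $\textup{Ext}$ sequence of the triangle reads
\begin{align*}
0 \to \textup{Ext}^0(F^\bullet,\OO) \to \textup{Ext}^0(\tau^{-1}\mathcal{L}_{\mathcal{MB}},\OO) \to \textup{Ext}^0(E^\bullet,\OO) \to \textup{Ext}^1(F^\bullet,\OO) \to \cdots
\end{align*}
but this is not yet in the advertised form, so I would instead use the \emph{other} rotation of the triangle, or equivalently the triangle $\tau^{-1}\mathcal{L}_{\mathcal{MB}} \to F^\bullet \to E^\bullet \to$, so that applying $R\Hom(-,\OO)$ yields the long exact sequence beginning $0 \to \textup{Ext}^0(E^\bullet,\OO) \to \textup{Ext}^0(F^\bullet,\OO) \to \textup{Ext}^0(\tau^{-1}\mathcal{L}_{\mathcal{MB}},\OO) \to \textup{Ext}^1(E^\bullet,\OO) \to \cdots$. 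Hmm — the direction needs care; I will simply pick the rotation that produces, after substituting the identifications above and using that everything vanishes outside degrees $0,1$, precisely
\begin{align*}
0 \to \mathfrak{aut}(\tau(f)) \to H^0(f^*T^{\log}_W) \to \mathcal{T}^1 \to \textup{Def}(\tau(f)) \to H^1(f^*T^{\log}_W) \to \mathcal{T}^2 \to 0,
\end{align*}
noting the sequence terminates because $\textup{Ext}^2(\tau^{-1}\mathcal{L}_{\mathcal{MB}},\OO)=0$ (smoothness) and $\textup{Ext}^i(E^\bullet,\OO)=0$ for $i\ge 2$ (curve), forcing $\textup{Ext}^i(F^\bullet,\OO)=0$ for $i \ge 3$ and giving the surjection onto $\mathcal{T}^2 = \textup{Ext}^2(F^\bullet,\OO)$.

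I expect the only genuinely delicate points to be bookkeeping ones: getting the direction of the connecting maps right so that $\mathfrak{aut}$ maps into $H^0$ rather than the reverse (this is forced once one remembers that $E^\bullet$ sits in the triangle as the relative term $\mathcal{L}_{M/\mathcal{MB}}$'s obstruction theory, so $E^\bullet[-1] \to \tau^{-1}\mathcal{L}_{\mathcal{MB}}$ and the cone is the absolute one), and justifying the identification of the hypercohomology of $E^{\bullet\vee}$ with $H^\bullet(C,f^*T^{\log}_W)$ via relative duality for the family of nodal curves $\pi$ — which is standard but should be stated. A secondary subtlety is that $T^{\log}_W$ here must be read as $f^*T^{\log}_{\mathcal{U}^{\textrm{etw}}/\mathcal{B}^{\textrm{etw}}}$ restricted to the fibre, i.e. the log tangent bundle of the target FM space relative to its moduli; I would remark that over the geometric point this is exactly what is written $f^*T^{\log}_W$. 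The rest is the formal machinery of perfect obstruction theories (Behrend–Fantechi) applied verbatim, so the proof is short: set up the triangle, dualize, plug in the three identifications, and read off the six-term sequence.
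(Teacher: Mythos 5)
Your proposal is correct and follows essentially the same approach as the paper, which gives this lemma essentially without argument — it just says the exact sequence is obtained by applying $R\mathrm{Hom}(-,\mathcal{O}_M)$ to the cone construction. Two small remarks to tighten the bookkeeping you flag as delicate: the identification $\mathrm{Ext}^i(E^\bullet,\mathcal{O})\cong H^i(C,f^*T^{\log}_W)$ is not a Grothendieck--Serre duality statement but simply biduality of the perfect complex $R\pi_*f^*T^{\log}$, since $E^\bullet$ is defined as its dual; and with the paper's (correct) convention that $\mathcal{L}_{\mathcal{MB}}$ for a smooth Artin stack sits in cohomological degrees $[0,1]$, one has $\mathrm{Ext}^{-1}(\tau^{*}\mathcal{L}_{\mathcal{MB}},\mathcal{O})_f=\mathfrak{aut}(\tau(f))$ and $\mathrm{Ext}^{0}=\mathrm{Def}(\tau(f))$, rather than $\mathrm{Ext}^0$ and $\mathrm{Ext}^1$ as you wrote. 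Once that shift is made, the rotation $\tau^{*}\mathcal{L}_{\mathcal{MB}}\to F^\bullet\to E^\bullet$ is forced (no guessing needed), and dualizing it produces exactly the six-term sequence with the advertised ordering, with $\mathcal{T}^1=\mathrm{Ext}^0(F^\bullet,\mathcal{O})$ and $\mathcal{T}^2=\mathrm{Ext}^1(F^\bullet,\mathcal{O})$ and the ends vanishing because $F^\bullet$ has amplitude $[-1,0]$ and $C$ is a curve.
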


The term $\mathfrak{aut}(\tau(f))$ refers to the group of first order infinitesimal automorphisms of $\tau(f)$. To carry out localization calculations, we need to know the equivariant Euler classes of the sheaves $\mathcal{T}^1,\mathcal{T}^2$. Since the Euler class is a K-theory invariant, it is enough to understand the other four terms in the exact sequence. The terms $H^{k} (f^{*}T^{\log}_W)$, $k=0,1$ are the cohomology groups of explicit locally free sheaves on the curve $C$, which may be calculated by hand. In fact, more can be said: 

\begin{lem}
Suppose $\pi: W \rightarrow X$ denotes the canonical contraction map. Then $T^{\log}_W = \pi^{*}T^{\log}_X = \pi^{*}T_X(-D)$.  
\end{lem}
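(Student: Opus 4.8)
The plan is to prove this locally on $W$ and then glue, using the explicit étale-local model of an FM space together with the description of the canonical log structures. Recall that $\pi: W \to X$ is the contraction morphism: on each expansion $W = X[m]$ it collapses the "bubble" components $\PP(N_{D/X}\oplus 1)$ back onto $X$ along $D$. Away from the singular locus of the fibers, $W \to X$ is an isomorphism over $X \setminus D$, and the claim $T^{\log}_W = \pi^* T_X(-D)$ there is just the standard identification of the log tangent sheaf $T_X(-\log D)$ with $T_X(-D)$ for a smooth divisor — this follows from the local model, where $T_X(-\log D)$ is generated by $x\partial_x, \partial_{z_1},\dots$ if $D = \{x=0\}$. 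So the real content is a local statement around the singular locus of a fiber $W_s$, where $W$ étale-locally looks like $\Spec k[x,y,z_1,\dots,z_{r-1}]/(xy)$ fibered over a base, with log structure given by the pushout diagram $\NN \to \NN^2$, $N^{W/S} \to M^{W/S}$ from Definition 3.

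First I would compute $T^{\log}_{W/S}$, the relative log tangent sheaf, directly from the chart. In the local model $xy = t$ (where $t$ is the image of the generator of $N^{W/S}$, i.e. the smoothing parameter), the log structure on $W$ is generated by $x$ and $y$ with the relation that $xy$ equals a monomial downstairs; the log derivations are generated by $x\partial_x$ (which equals $-y\partial_y$ modulo the relation, since $d\log x + d\log y$ is pulled back from the base), together with $\partial_{z_1},\dots,\partial_{z_{r-1}}$. Crucially, $x\partial_x$ is a regular vector field on all of $\Spec k[x,y]/(xy)$ — it vanishes to order one at the node but is globally defined — so $T^{\log}_{W/S}$ is locally free of rank $r-1$ even across the node, and one checks it is exactly the pullback under $\pi$ of $T_X(-\log D) = T_X(-D)$: the contraction $\pi$ sends the local coordinate on $X$ transverse to $D$ to $x$ (or $y$) on the appropriate branch, and $x\partial_x$ on $W$ pulls back from $u\partial_u$ on $X$ where $D = \{u=0\}$. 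Then I would pass from the relative to the absolute log tangent sheaf via the exact sequence $0 \to T^{\log}_{W/S} \to T^{\log}_W \to \pi_W^* T^{\log}_S \to 0$ (or rather the pullback of the log tangent of the base), and use that this sequence matches the corresponding one on $X$ — but since for us $X$ is an honest smooth projective variety with trivial log structure and $T_X$ is already absolute, and the log structures on $W$, $S$ all come from the expansion data which $\pi$ contracts, the base terms cancel: $T^{\log}_W$ and $\pi^* T_X(-D)$ agree because the log structure on $W$ relative to $X$ is "vertical," supported on the contracted locus.

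More structurally, the cleanest route is: the morphism $\pi: W \to X$ is log étale when $X$ is given the trivial log structure? No — rather, $\pi$ is a composition of the structural contraction maps $X[n] \to X[n-1]$, each of which is a blowdown along a $\PP^1$-bundle, and each such map is log smooth of relative log dimension zero once one accounts correctly for the divisor at infinity; equivalently, the canonical log structure on each expansion $X[n]$ is pulled back, in the appropriate sense, so that the log cotangent complex $\mathcal{L}^{\log}_{X[n]/X}$ vanishes. This is essentially the statement in Olsson's paper \cite{OlssonSemistable} that the expansions form a log étale (indeed log smooth with zero-dimensional fibers) modification. Granting that, $T^{\log}_W = \pi^* T^{\log}_X$, and then $T^{\log}_X = T_X(-\log D) = T_X(-D)$ is the standard computation for a smooth variety with a smooth divisor. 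I would state it in this order: (1) reduce to the identity $T^{\log}_X = T_X(-D)$ plus the claim that $\pi$ is log étale; (2) prove $\pi$ log étale by the local chart computation at the node as above; (3) conclude.

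The main obstacle I expect is step (2) — verifying carefully, in the given chart for the log FM space, that the relative log differentials $\Omega^{\log}_{W/X}$ vanish, i.e. that $x\, d\log x = -\,y\, d\log y$ and that this one relation among $d\log x, d\log y$ together with the identification of $d\log x + d\log y$ as pulled back from $X$ (where it corresponds to $d\log u$ for the coordinate $u$ transverse to $D$, which $\pi$ sends to, say, $x$ on one branch) leaves no relative log differentials. One must be slightly careful about which branch maps to which side of $D$ and about the twisting datum $d = (d_1,\dots,d_m)$ in Definition 4 — a priori the "twisted" structure could contribute, but since twisting only rescales the log structure on the base $S$ (it is a root-stack operation on $N^{W/S}$, not on $M^{W/S}$ relative to $W$), it does not affect $T^{\log}_W$ as a sheaf on $W$, only the map to $S$. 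Once that branch bookkeeping is done the identification is forced, and gluing over $X$ is immediate since everything is canonical.
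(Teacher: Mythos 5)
Your structural route is, in essence, the paper's own proof: the lemma is proved not by a fiberwise computation but by recalling that $X[n]$ is the central fiber of the family $Y[n]\rightarrow \Aff^n$, where $Y[n]$ is built from $X\times\Aff^n$ by an iterated blow-up along boundary centers; such blow-ups are log \'etale, so $T^{\log}_{Y[n]/\Aff^n}$ is pulled back from $T^{\log}_{X\times\Aff^n/\Aff^n}=T^{\log}_X$, and since log \'etaleness is stable under base change the same holds for the fiber $X[n]$ over its base log point. Your remark that the twisting datum $d=(d_1,\dots,d_m)$ is irrelevant is also correct, and for the right reason: the twisted log structure on $(W,M)/(S,N)$ is obtained by base change along $(S,N)\rightarrow(S,N^{W/S})$, and relative log differentials are unchanged under such base change. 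Note also that the lemma, as it is used, concerns the log tangent bundle of $W$ \emph{relative to the base log point} (it is the restriction of $T^{\log}_{\mathcal{U}^{\textup{etw}}/\mathcal{B}^{\textup{etw}}}$), so your relative-to-absolute step is unnecessary; one works relatively throughout.

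The weak point is the execution you actually propose for the key step, namely verifying log \'etaleness by a chart computation on the fiber itself. That computation is fine at the node $D[0]$ adjacent to the $X$-component, where indeed $\pi^{*}u=x$ for $u$ a local equation of $D$. But at the deeper nodes $D[i]$, $i\geq 1$, both branches are contracted into $D$, so $\pi^{*}u$ vanishes identically as a function near the node: there is no branch on which $u$ ``becomes $x$ or $y$.'' The identification $\pi^{*}(d\log u)\equiv d\log x$ modulo differentials from the base only makes sense at the level of log structures, where $u$ pulls back to a product of a branch coordinate with smoothing parameters coming from the base (e.g. $u\mapsto x\,t_1$ in the first nontrivial case), and to see this one needs the family $Y[n]$ (equivalently Olsson's description of expansions) --- i.e. you are pushed back to the structural argument. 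A complete chart proof would also have to treat the generic points of the components and the divisor at infinity $D[n]$, which is part of the log structure and is exactly why $T_X(-\log D)$ rather than $T_X$ appears. Two small slips: with the local model $k[x,y,z_1,\dots,z_{r-1}]/(xy)$ the relative log tangent sheaf has rank $r$, not $r-1$ (consistent with your own generators $x\partial_x,\partial_{z_1},\dots,\partial_{z_{r-1}}$); and the identification $T_X(-\log D)=T_X(-D)$ is the paper's notational convention for the subsheaf of vector fields tangent to $D$, not the twist $T_X\tensor\mathcal{O}_X(-D)$.
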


\begin{proof} 
We recall the explicit construction of the log schemes $X[n]$. For details of the construction, the reader is referred to \cite{Li}, \cite{OlssonSemistable}. Set $Y[0]=X[0]=X$; let $Y[1]$ be the blow up of $Y[0] \times \Aff^1 = X \times \Aff^1$ along the divisor $D \times 0$, with a divisor $D[1] \subset Y[1]$ defined as the proper transform of $D \times \Aff^1$. We view $Y[1]$ as a family of log schemes over $\Aff^1$, with logarithmic structure on $\Aff^1$ coming from the divisor $0 \in \Aff^1$, the product log structure on $X \times \Aff^1$, and the natural log structure on the blow up $Y[1]$. The log scheme $X[1]$ over $\Spec k$ is the fiber of this family over $0$, with the induced logarithmic structures on $X[1]$ and the base $\Spec k$. Next, $Y[2]$ is the blow up of $Y[1] \times \Aff^1$   along $D[1] \times 0$, with a divisor $D[2]$ the proper tranform of $D[1] \times \Aff^1$. This is viewed as a family over $\Aff^2$ , with the standard toric log structure on $\Aff^2$ coming from the axes, and the log structure on $Y[2]$ again arising from the product log structure on $Y[1] \times \Aff^1$ and blowing up. The scheme $X[2]$ is the fiber over $0 \in \Aff^2$. In general, $Y[n]$ is constructed from $Y[n-1] \times \Aff^1$ by blowing up $D[n-1] \times 0$; this is viewed as a family over $\Aff^n$ with the toric logarithmic structure on $\Aff^n$ and the logarithmic structure on $Y[n]$ coming from $Y[n-1] \times \AA^1$ and blowing up; $D[n]$ is the proper transform of $D[n-1] \times \Aff^1$; and $X[n]$ is the fiber over $0 \in \Aff^n$. Since blowing up along a divisor yields a log \'{e}tale map to the originial scheme, the morphism $\pi$ in the diagram

\begin{align*}
\xymatrix{\ar @{} [dr] |{}
Y[n] \ar[r]^-{\pi} \ar[d] & X \times \Aff^n \ar[d]  \\
 \Aff^n  \ar[r]_{=} & \Aff^n }
\end{align*}

\noindent is log \'{e}tale. It follows that the log tangent bundle of $Y[n]$ over $\AA^n$ is the pullback of the log tangent bundle of $X \times \Aff^n$ over $\Aff^n$, which is simply the log tangent bundle of $X$. Since log \'{e}tale morphisms are stable under base change, it follows that the tangent bundle of $X[n]$ over the base $\Spec k$ is pulled back from $X$ as well, as claimed. 
\end{proof}
 
Thus the two terms $H^{k} (f^{*}T^{\log}_W)$ only depend on the logarithmic map from $C$ to $X$. What we have to understand are the two terms $\mathfrak{aut}(\tau(f))$, $\textrm{Def}(\tau(f))$, that is, the infinitesimal automorphism group and tangent space of a point of $\mathcal{MB}$. In the discussion that follows, \emph{we restrict attention to the stack $\mathcal{MB}$ when $\mathcal{B}$ is the stack of expansions} of $X$, discussed in remark 2.   \\

We will understand the deformation group in terms of the stack of twisted stable curves $\mathfrak{M}_{g,n}^{\textup{tw}}$ of Abramovich-Vistoli \cite{AV}, which is well understood. To do so, we must digress a bit. First, it will be easier for technical reasons to compare the deformation theory of $\mathcal{MB}$ with the deformation theory of the stack of log twisted curves of Olsson \cite{Otw}, which we denote by $\mathfrak{M}_{g,n}^{\textup{logtw}}$. It is shown in \cite{Otw} that $\mathfrak{M}_{g,n}^{\textup{logtw}} \cong \mathfrak{M}_{g,n}^{\textup{tw}}$. Recall the definition of $\mathfrak{M}_{g,n}^{\textup{logtw}}$: \\

\begin{definition} A log twisted curve over a scheme $S$ is a log curve $C \rightarrow S, N^{C/S} \stackrel{\alpha}{\rightarrow} N$ where $N$ is locally free and $\alpha$ is an injection that is locally given by a diagonal matrix. 
\end{definition}  

Let us fix some notation. Denote the set of non-distinguished nodes of the curve by $R$, the set of distinguished nodes by $S$, and the nodes of the target by $T$. In the notations above, we would have $|T|=m,|R|=m',|S|=m''$. Furthermore, let

\begin{align*}
\mathcal{A} = [\Aff^1/\mathbb{G}_m]
\end{align*}

\noindent denote the ''universal target'': this is the moduli space that over a scheme $S$ parametrizes line bundles $L$ over $S$, together with a section $s \in \Gamma (S,L)$, up to isomorphism. \\

Consider now a family $F$ of log stable maps over a scheme $S$, which specializes over a geometric point $\Spec k$ to a map $f:((C,M_C)/(k,N),\vec{x}) \rightarrow (W,M_W)/(k,N)$. We must now define several morphisms. \\

First, consider the image of $F$ in $\mathfrak{M}_{g,n}^{\textup{log}}$ under the natural forgetful morphism, which forgets the data of the target and the map. \'{E}tale locally around $(C,M_C)/(k,N)$, we have a map $\mathfrak{M}_{g,n}^{\textup{log}} \rightarrow \mathfrak{M}_{g,n}^{\textup{logtw}}$. The morphism is defined as follows; for an \'{e}tale neighborhood $(C',M_{C'})/(S',N)$ of $(C,M_C)/(k,N)$ in which all log structures $N^{C'/S'}$ and $N$ are actually free, the map $s^{C'/S'}: N^{C'/S'} \rightarrow N$ factors as $N^{C'/S'} \stackrel{r^{C'/S'}}{\rightarrow} N^{C'/S'} \rightarrow N$. This factorization has the following description on the level of characteristic monoids: we have seen in formula \ref{(1)} of section 2 that the morphism $\bar{N}^{C'/S'}=\NN^{m''} \oplus \NN^{m'} \rightarrow \bar{N}=\NN^{m} \oplus \NN^{m'}$ has the form $(\Gamma, id)$, with $\Gamma$ a generalized diagonal matrix. This factors as  

\begin{align*}
\NN^{m''} \oplus \NN^{m'} \stackrel{(\gamma,id)}{\rightarrow} \NN^{m''} \oplus \NN^{m'} \stackrel{p}{\rightarrow} \NN^{m} \oplus \NN^{m'}
\end{align*}

where $\gamma$ is the matrix $\Gamma$ 'made diagonal', i.e 

\begin{align*}
\gamma =
\begin{pmatrix}
\Gamma_{1,1} & 0 & 0 & \cdots & 0 & 0 & \cdots & 0 \\
0 & \Gamma_{1,2} & 0 & \cdots & 0 & 0 & \cdots & 0 \\
\vdots & \vdots & \ddots & \vdots & \vdots &\vdots &\vdots &\vdots \\
0 & 0 & \cdots & \Gamma_{1,k_1} & 0 & \cdots & \cdots & 0 \\
0 & 0 & \cdots & 0 & \Gamma_{2,1} & 0 & \cdots & 0 \\
\vdots & \vdots & \vdots & \vdots & \vdots & \ddots & \vdots & \vdots\\
0 & \cdots & \cdots & \cdots & \cdots & \cdots & \cdots &\Gamma_{m,k_m}
\end{pmatrix}
\end{align*}

\noindent and the map $p$ is the projection that sends the first $k_1$ coordinates of $\NN^{m''}$ to the first coordinate of $\NN^{m}$, the next $k_2$ coordinates to the second coordinate of $\NN^{m}$, and so forth. It remains to explain how the morphism lifts from the level of characteristic monoids to the actual log structures; this is the evident extension of formula \ref{eq:1'} of remark 2. If $s^{C'/S'}$ mapped

\begin{align*}
e_{ij} \mapsto (\Gamma_{ij}e_i,u_{ij})
\end{align*}

\noindent we now have that $r^{C'/S'}$ maps

\begin{align*}
e_{ij} \mapsto (\Gamma_{ij}e_{ij},u_{ij})
\end{align*}

\noindent Therefore, from the data $(C' \rightarrow S', N^{C'/S'} \rightarrow N)$ we obtain a log twisted curve $(C' \rightarrow S', r^{C'/S'}:N^{C'/S'} \rightarrow N^{C'/S'})$. This defines the required morphism. When we compose with the isomorphism $\mathfrak{M}_{g,n}^{\textup{logtw}} \cong \mathfrak{M}_{g,n}^{\textup{tw}}$, the twisted curve $\mathcal{C}$ we obtain is the curve $C$ with the $j$-th node of $C$ mapping to the $i$-th node of $W$ twisted by $\Gamma_{ij}$.  \\

Next, \'{e}tale locally around the image of $f$ in $\mathfrak{M}_{g,n}^{\textup{logtw}}$, we have a morphism $\mathfrak{M}_{g,n}^{\textup{logtw}} \rightarrow \mathcal{A}^{m''}$. \'{e}tale locally around $\Spec k$ in $S$, the $m''$ nodes of $C$ determine $m''$ divisors $D_i$, the locus of points in $S$ over which the node persists. A divisor determines a line bundle with a section, hence, the divisors $D_i$ determine an element of $\mathcal{A}^{m''}$. We may describe this map alternatively as follows: The $m''$ nodes signify that \'{e}tale locally the tautological map $S \rightarrow \mathfrak{M}_{g,n}$ maps $C$ to the intersection of $m''$ divisors in $\mathfrak{M}_{g,n}$ intersecting normally, which corresponds to $m''$ line bundles with sections in $\mathfrak{M}_{g,n}$. We may pull these line bundles with sections to $S$, to obtain the desired element of $\mathcal{A}^{m''}$. We may further obtain a morphism $\mathcal{B}^{etw} \rightarrow \mathcal{A}^{m}$ in a similar fashion. \\

Putting everything together, we obtain an \'{e}tale neighborhood $\mathcal{MB}_f$ of $\tau(f)$ in $\mathcal{MB}$ and a morphism $\phi$
  
\begin{align*}
\xymatrix{\ar @{} [dr] |{}
\phi: \mathcal{MB}_f \ar[r] & \mathfrak{M}^{\textup{logtw}} \times_{\mathcal{A}^{m''}}\mathcal{A}^{m}  \ar[d] \ar[r] & \mathcal{A}^{m} \ar[d] \\
 & \mathfrak{M}^{\textup{logtw}} \ar[r] & \mathcal{A}^{m''} }
\end{align*}

\begin{lem}
The morphism $\phi$ is \'{e}tale. 
\end{lem}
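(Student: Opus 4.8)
The plan is to show $\phi$ is \'{e}tale by verifying that it is formally \'{e}tale; since $\mathcal{MB}_{f}$ and $\mathfrak{M}^{\textup{logtw}} \times_{\mathcal{A}^{m''}} \mathcal{A}^{m}$ are algebraic stacks locally of finite presentation, this is enough. Thus I would fix a square-zero closed immersion $T_{0} \hookrightarrow T$ of affine schemes and a $2$-commutative square whose top edge is a $T_{0}$-point of $\mathcal{MB}_{f}$ and whose bottom edge is a $T$-point of the fibre product extending its image, and prove that the groupoid of liftings to a $T$-point of $\mathcal{MB}_{f}$ is equivalent to a point: lifts exist and are unique up to unique isomorphism.

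The first step is to unwind both moduli problems near $\tau(f)$. A $T$-point of $\mathcal{MB}_{f}$ is a prestable curve $C \to T$, a family $W \to T$ of expansions of $X$ together with its contraction $W \to X$, and a locally free log structure $N$ on $T$ with morphisms $s \colon N^{C/T} \to N$ and $t \colon N^{W/T} \to N$ of the form recorded in Section~2; since $N^{C/T}$ and $N^{W/T}$ are the canonical log structures, this amounts to a $T$-point of $\mathfrak{M}_{g,n}^{\textup{log}}$ near $[(C,M_{C})]$ and a $T$-point of $\mathcal{B}^{\textup{etw}}$ near $[W]$ glued over one and the same $N$, which is exactly the meaning of the fibre product over $\textup{LOG}$. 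A $T$-point of $\mathfrak{M}^{\textup{logtw}} \times_{\mathcal{A}^{m''}} \mathcal{A}^{m}$, on the other hand, is (by Olsson's description \cite{Otw} and the constructions recalled above) the coarse curve $C \to T$ with the diagonal refinement $r$ twisting the $j$-th distinguished node by $\Gamma_{ij}$, together with the $m$-tuple of line bundles with sections cutting out the nodal loci of a family $W \to T$, subject to the compatibility that, at each distinguished node, the Deligne--Faltings pair produced by $r$ and the pair attached to the corresponding node of $W$ coincide after the tensor powers dictated by the integers $\Gamma_{ij}$ and $d_{i}$ (these powers being consistent because $d_{i} = \Gamma_{ij}l_{ij}$). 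One then checks that $\phi$ sends a log stable map's source--target pair precisely to this data, via the factorisation $s = p \circ r$ and the morphism $\mathcal{B}^{\textup{etw}} \to \mathcal{A}^{m}$.

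The next step is to analyse the lifting problem piece by piece. Using the earlier Lemma that $T^{\log}_{W} = \pi^{*}T^{\log}_{X}$ --- so the contraction $W \to X$ is log \'{e}tale over $X$ --- deforming $W$ with its contraction is the same as deforming the $m$ smoothing parameters of its nodes, and by the explicit local charts (a neighbourhood of $[W]$ in the stack of expansions is $[\Aff^{m}/\mathbb{G}_{m}^{m}]$, matching $\mathcal{A}^{m}$) this is exactly the datum of the given lift of the $\mathcal{A}^{m}$-point; likewise deforming $C$ with its diagonal refinement $r$ is the datum of the given lift of the $\mathfrak{M}^{\textup{logtw}}$-point. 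What remains is to lift $N$ together with $s$ and $t$: in Deligne--Faltings terms $N$ is a single line bundle with section at each of the $m$ target nodes, and the prescribed shapes of $s$ and $t$ say that this pair is a $\Gamma_{ij}$-th root of each $r$-pair of the twisted curve and has $d_{i}$-th power equal to the $i$-th $W$-pair. The minimality condition of Definition~\ref{def5} --- absence of a common factor among $\Gamma_{i,1},\dots,\Gamma_{i,k_{i}}$, with the exceptional case $d_{1}=1$ of Remark~\ref{rmk1} --- is what forces this pair, and hence $N$, $s$ and $t$, to exist and be unique, the units $u_{ij}$ being pinned down by $u_{ij}^{l_{ij}}=1$; assembling the three pieces produces the unique lift, so $\phi$ is formally \'{e}tale and hence \'{e}tale.

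The part I expect to be the main obstacle is the reconciliation in the second step: matching the fibre product over the category $\textup{LOG}$ that defines $\mathcal{MB}$ with the ordinary fibre product over $\mathcal{A}^{m''}$ on the target side. This rests on the assertion that, at a distinguished node, ``a common locally free log structure $N$ through which $s$ and $t$ factor with the prescribed contact orders'' is, via Deligne--Faltings together with the uniqueness of the minimal such $N$ coming from the minimality condition, the same datum as ``a common line bundle with section after the relevant tensor powers''. Making this precise requires careful bookkeeping of the units $u_{ij}$ and a separate treatment of the case of Remark~\ref{rmk1} in which no node of $C$ maps to $D[0]$, but should need no idea beyond the explicit local models of Section~2 and Olsson's comparison $\mathfrak{M}_{g,n}^{\textup{logtw}} \cong \mathfrak{M}_{g,n}^{\textup{tw}}$.
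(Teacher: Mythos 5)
Your high-level strategy — check formal \'etaleness by lifting against square-zero extensions, and reduce the problem to matching the log structure $(N,s,t)$ once the twisted curve and the $\mathcal{A}^{m}$-data are lifted — is in the same spirit as the paper's proof, which compares tangent spaces directly using smoothness of the source and target. But the crucial step, where you argue that the lift of $(N,s,t)$ ``exists and is unique'' given the other data, contains a real error.

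You write that ``the units $u_{ij}$ \textup{[are]} pinned down by $u_{ij}^{l_{ij}}=1$.'' This is wrong on two counts. First, the equation $u_{ij}^{l_{ij}}=1$ is the constraint required for the \emph{map} $f:(C,M)\to(W,M^W)$ to commute with the structure maps to $(S,N)$; it lives on the moduli space $\Kim$ of log stable maps, not on $\mathcal{MB}$, which only parametrizes the pair (source curve, target expansion) over a common log base and has no map in its data. So you cannot invoke it when deforming in $\mathcal{MB}$. Second, even where the constraint does apply it cuts the unit down to a root of unity, not to a single value, so it would not pin anything down. The correct mechanism, which the paper's proof makes explicit, is that the units $u_{ij}$ and their first-order perturbations $v_{ij}$ are genuine free parameters appearing on \emph{both} sides: on the $\mathcal{MB}$ side inside the structure map $s$, and on the $\mathfrak{M}^{\textup{logtw}}$ side inside the diagonal refinement $r$, identified via the factorization $s = p\circ r$. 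What is killed by isomorphism is the unit for $t$ (by conjugating by $e\mapsto(e,1+c\epsilon)$, using $d\ne 0$), and what is forced to be equal is the common smoothing speed $\alpha$ across all distinguished nodes lying over the same target node; this equality is exactly what the fibre product over $\mathcal{A}^{m''}$ encodes. Without the correct bookkeeping of where the units and smoothing parameters live, the bijection between lifting data — which is the whole content of the lemma — is not established. Replacing your ``pinned down by $u_{ij}^{l_{ij}}=1$'' argument with the matching of $v_{ij}$ through $s = p\circ r$ and the elimination of the $t$-unit by automorphism would repair the proof and bring it in line with the paper's.
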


\begin{proof} Since all stacks in question are smooth, it is sufficient to show that their tangent spaces at $\tau(f)$ are isomorphic. It will be clear from the proof that we may reduce to the case where two distinguished nodes map into a single node of the target, that is, where $m''=2,m=1,m'=0$. The argument for this is the same as the argument in the proof of lemma $2$.  We will restrict attention to this case to simplify the notation. We are therefore given \'{e}tale locally around $\tau(f)$ a diagram 

\begin{align*}
\xymatrix{\ar @{} [dr] |{}
\phi: \mathcal{MB} \ar[r] & \mathfrak{M}^{\textup{logtw}} \times_{\mathcal{A}^{2}}\mathcal{A}^{1}  \ar[d] \ar[r] & \mathcal{A}^{1} \ar[d] \\
 & \mathfrak{M}^{\textup{logtw}} \ar[r] & \mathcal{A}^{2} }
\end{align*}

\noindent and want to show that $\phi$ induces an isomorphism of tangent spaces. Recall that the element $\tau(f) \in \mathcal{MB}(\Spec(k))$ consists of data of a pair $(C/\Spec k, W=X[1]/\Spec[k])$, and two diagrams of log structures 

\begin{align}
\xymatrix{\ar @{} [ldr] |{}
\NN^2 \oplus k^{*} \ar[r]^{s} \ar[rd] & \NN \oplus k^{*} \ar[d]^{\kappa} \\
&k  }, \xymatrix{\ar @{} [ldr] |{}
\NN \oplus k^{*} \ar[r]^{t} \ar[rd] & \NN \oplus k^{*} \ar[d]^{\kappa} \\
&k  }
\end{align} 

\noindent Here the maps $s$ and $t$ are the maps $s^{C/\Spec k}, t^{W/\Spec k}$ respectively. We have by formula \ref{eq:1'} that $e_i \mapsto (\Gamma_ie,u_i)$ (The $\Gamma_i$ here are what we would have called $\Gamma_{1i}$ above, but since there is only one target node, we drop the first index to simplify notation). The map $t$ is given by mapping the generator $e \rightarrow (de,1)$, and the rest of the arrows send the generators of $\NN,\NN^2$ to $0$ in $k$. \\

On the other hand, an element of $\mathfrak{M}^{\textup{logtw}} \times_{\mathcal{A}^{2}}\mathcal{A}^{1}(\Spec k)$ corresponds to a triple $(x,y,\alpha)$ of an element $x$ of $\mathfrak{M}^{\textup{logtw}}(\Spec k)$, an element $y \in \mathcal{A}^{1}(\Spec k)$, and an isomorphism between their images in $\mathcal{A}^{2}(\Spec k)$. The element $x$ corresponds to a pair of a nodal curve $C/\Spec k$ as above and a diagram of log structures  

\begin{align}
\xymatrix{\ar @{} [ldr] |{}
N^{C/k} = \NN^2 \oplus k^{*} \ar[r]^-{r} \ar[rd] & \NN^2 \oplus k^{*} \ar[d]^{\lambda} \\
&k  }
\end{align}

\noindent where the top map $r = r^{C/\Spec k}$ is an injection $e_i \mapsto (\Gamma_ie_i,u_i)$; the  map $\lambda$ sends $e_i \mapsto a_i \in k$; and the diagonal map is determined by commutativity, $e_i \mapsto u_ia_i^{\Gamma_i}$. An element of $\mathcal{A}^{1}(\Spec k)$ is a line bundle over $\Spec k$ together with a section, in other words, an element $a \in \Aff^1$. An element of $\mathcal{A}^{2}(\Spec k)$ is similarily a pair $(a_1,a_2) \in \Aff^2$. The map $\mathcal{A}^1(\Spec k) \rightarrow \mathcal{A}^2(\Spec k)$ sends $a \mapsto (a,a)$ and the map $\mathfrak{M}^{\textup{logtw}}(\Spec k) \rightarrow \mathcal{A}^2(\Spec k)$ sends the data just described to the pair $(a_1,a_2)$. Therefore the triple $(x,y,\alpha)$ has $x$ as above, $y=a \in \Aff^1$, and $\alpha = (c_1,c_2) \in (k^{*})^2$ an isomorphism of $(a,a)$ with $(a_1,a_2)$ in $\Aff^2$, that is 

\begin{itemize}
\item $c_i = \frac{a_i}{a}$ if all $a$ and $a_i$ are non-zero 
\item $c_i$ arbitrary if $a=a_1=a_2=0$. 
\end{itemize}

\noindent The morphism $\phi: \mathcal{MB} \rightarrow \mathfrak{M}^{\textup{logtw}} \times_{\mathcal{A}^{2}}\mathcal{A}^{1}$ then sends the data corresponding to $\tau(f)$ to the triple $(x,0,(1,1))$, where $x$ is the curve $(C/\Spec k)$ and the diagram (3) has $r,\beta$ determined by $r(e_i) = (\Gamma_ie_i,u_i)$ and  $\beta(e_i)= 0$, with $\Gamma_i,u_i$ as in the definition of $s$. \\

To show that $\phi$ induces an isomorphism of tangent spaces we consider isomorphism classes of morphisms from $\Spec k[\epsilon]$ to all stacks in question extending the given data over $\Spec k$. A morphism $\Spec k[\epsilon] \rightarrow \mathfrak{M}^{\textup{logtw}}$ corresponds to a pair of an infinitesimal deformation $C'/\Spec k[\epsilon]$ of $C$ and a diagram 

\begin{align}
\xymatrix{\ar @{} [ldr] |{}
 \NN^2 \oplus k[\epsilon]^{*} \ar[r]^{r[\epsilon]} \ar[rd] & \NN^2 \oplus k[\epsilon]^{*} \ar[d]^{\lambda[\epsilon]} \\
&k[\epsilon]  }
\end{align}

\noindent lying over (3). Therefore, we must have $e_i \rightarrow (\Gamma_ie_i,u_i+v_i\epsilon)$ under $r[\epsilon]$; $\lambda[\epsilon]$ maps $e_i \rightarrow \alpha_i \epsilon$. The diagonal arrow is determined by commutativity $e_i \mapsto (\alpha_i)^{\Gamma_i}(u_i+v_i\epsilon)$. \\

Morphisms $\Spec k[\epsilon] \rightarrow \mathcal{A}^1$ and $\Spec k[\epsilon] \rightarrow \mathcal{A}^2$ lying over the given elements $0 \in \mathcal{A}^{1}(\Spec k), (0,0) \in \mathcal{A}^2(\Spec k)$ are again line bundles over $\Spec k[\epsilon]$, which are thus trivial, together with a section restricting to 0 over $\Spec k$; hence they correspond to $\alpha \epsilon, (\alpha_1\epsilon,\alpha_2\epsilon)$ respectively. Under the morphism $\mathfrak{M}^{\textup{logtw}}(k[\epsilon]) \rightarrow \mathcal{A}^{2}(\Spec k[\epsilon])$ maps the extension of $x$ to the pair $(\alpha_1\epsilon,\alpha_2\epsilon)$, where the $\alpha_i$ are the ones appearing in diagram (4). Isomorphisms between $(\alpha \epsilon, \alpha \epsilon)$ and $(\alpha_1\epsilon,\alpha_2\epsilon)$ restricting to $(1,1)$ over $\Spec k$ are of the form $(1+\beta_1\epsilon,1+\beta_2\epsilon)$. Note that $\beta_1,\beta_2$ can be arbitrary; however, in order for an isomorphism to exist, the condition $\alpha_1=\alpha_2=\alpha$ is forced. Therefore, the choices involved in extending $(x,y,\alpha)$ are the choices of the deformation $C'$ of $C$ and the numbers $\alpha,v_i,\beta_i$. Notice however that the choice of either the $v_i$ or the $\beta_i$ can be eliminated via an isomorphism. For consider two pairs $(1+\beta_1\epsilon,1+\beta_2\epsilon)$ and $(1+\beta_1'\epsilon,1+\beta_2'\epsilon))$. There is an isomorphism 

\begin{align*}
\xymatrix{\ar @{} [ldr] |{}
 \NN^2 \oplus k[\epsilon]^{*}  \ar[d]_{=} &\stackrel{r[\epsilon]:e_i \mapsto (\Gamma_ie_i,u_i+v_i\epsilon)}{\xrightarrow{\hspace*{3cm}}} & \NN^2 \oplus k[\epsilon]^{*} \ar[d]^{A} \ar[r]^-{\lambda[\epsilon]} & k[\epsilon] \ar[d]^{=}\\
 \NN^2 \oplus k[\epsilon]^{*}  &\stackrel{r'[\epsilon]:e_i \mapsto (\Gamma_ie_i,u_i+v'_i\epsilon)}{\xrightarrow{\hspace*{3cm}}} & \NN^2 \oplus k[\epsilon]^{*} \ar[r]_-{\lambda[\epsilon]} &k[\epsilon]  }
\end{align*}

\noindent with the vertical arrow being the isomorphism $A:e_i \mapsto (e_i,1+c_i\epsilon)$ where $c_i=\beta_i - \beta_i'$ and $v_i' = v_i + u_i\Gamma_ic_i$. In other words, if we denote by $x[\epsilon]$ the extension of $x$ determined by choosing $r[\epsilon]$ as the extension of $r$ (that is, with the choice of the unit $v_i$) and by $x[\epsilon]'$ the one with $r'[\epsilon]$ as the extension of $r$ (with the unit $v_i'$) we have an isomorphism between the triple $(x[\epsilon],\alpha\epsilon, (1+\beta_1\epsilon,1+\beta_2\epsilon))$ and $(x[\epsilon]',\alpha\epsilon, (1+\beta_1'\epsilon,1+\beta_2'\epsilon))$. \\

To summarize, the choice of the extension of the image of $\tau(f)$ in $\mathfrak{M}^{\textup{logtw}}$ corresponds to the data of a choice of a deformation $C'$ of $C$, and the choices of the numbers $\alpha,v_i$. \\

On the other hand, a choice of an extension of $\tau(f)$ in $\mathcal{MB}$ corresponds to the data of a deformation $C'$ of $C$, a deformation $W'$ of $W$ in $\mathcal{B}$, which is necessarily trivial, and diagrams  

\begin{align}
\xymatrix{\ar @{} [ldr] |{}
\NN^2 \oplus k[\epsilon]^{*} \ar[r]^{s[\epsilon]} \ar[rd] & \NN \oplus k[\epsilon]^{*} \ar[d]^{\kappa[\epsilon]} \\
&k[\epsilon]  }, \xymatrix{\ar @{} [ldr] |{}
\NN \oplus k[\epsilon]^{*} \ar[r]^{t[\epsilon]} \ar[rd] & \NN \oplus k[\epsilon]^{*} \ar[d]^{\kappa[\epsilon]} \\
&k[\epsilon]  }
\end{align} 

\noindent lying over (2). The extension $s[\epsilon]$ maps $e_i \mapsto (\Gamma_ie,u_i+v_i\epsilon)$, $t[\epsilon]$ maps $e \mapsto (de,1+v\epsilon)$, $\kappa[\epsilon]$ maps $e \mapsto \alpha \epsilon$ and the diagonal arrows are determined by commutativity. Notice that up to isomorphism, there is only one choice for the right diagram, the choice of the number $\alpha$. Again, this is because from the diagram

\begin{align*}
\xymatrix{\ar @{} [ldr] |{}
 \NN \oplus k[\epsilon]^{*}  \ar[d]_{=} &\stackrel{t[\epsilon]:e \mapsto (de_i,1+v\epsilon)}{\xrightarrow{\hspace*{3cm}}} & \NN \oplus k[\epsilon]^{*} \ar[d]^{B} \ar[r]^-{\kappa[\epsilon]} & k[\epsilon] \ar[d]^{=}\\
 \NN \oplus k[\epsilon]^{*}  &\stackrel{t'[\epsilon]:e \mapsto (de,1+v'\epsilon)}{\xrightarrow{\hspace*{3cm}}} & \NN \oplus k[\epsilon]^{*} \ar[r]_-{\kappa[\epsilon]} &k[\epsilon]  }
\end{align*}

\noindent with $B$ the isomorphism $e \mapsto (e,1+c\epsilon)$, and $v' =v+ cd\epsilon$ we get an isomorphism between the extension determined by the extensions $s[\epsilon],\kappa[\epsilon]$ of $s,\kappa$ with the extension determined by $s'[\epsilon],\kappa[\epsilon]$. Since by varying $c$ the expression $v+cd\epsilon$ varies through all elements of $k$, the choice of $s[\epsilon]$ is eliminated up to isomorphism, as claimed. Once the isomorphism $B$ is fixed, though, the diagrams on the left with different choices of $v_i$ remain distinct. In other words, the choices involved in extending $\tau(f)$ are up to isomorphism the extension $C'$ of $C$ and the numbers $\alpha,v_i$. These are precisely the same choices as involved in extending $(x,0,(1,1))$. This concludes the proof of the lemma.
\end{proof}

\begin{remark} The geometric meaning of the number $\alpha_i$ in the map $\NN^2 \oplus k[\epsilon]^{*} \rightarrow k[\epsilon], e_i \mapsto \alpha_i\epsilon$ is that the $i$-th node is smoothed with speed $\alpha_i$ in the moduli space of twisted curves. The geometric significance of lemma 4 then is that in $\mathcal{MB}$, all nodes mapping to the same node of the target must be smoothed simultaneously, with the same speed: the speed with which the node of the target is being smoothed. 
\end{remark}

The lemma in particular implies that we may calculate $\textrm{Def}(\tau(f))$ as follows: Let us write $\sum x_i$ for the divisor of marked points and $\mathcal{C}$ the twisted curve obtained from $f$ as explained. The tangent space to the stack of twisted curves is given by the ext group

\begin{align*}
Ext^{1}((\Omega_{\mathcal{C}}(\sum{x_i}),\mathcal{O}_\mathcal{C})
\end{align*}

\noindent where $\OO_{\mathcal{C}}$ and $\Omega_{\mathcal{C}}$ are the structure sheaf and sheaf of Kahler differentials of the twisted curve, respectively. The ''local-to-global'' spectral sequence for $Ext$ says that the the tangent space fits into the short exact sequence

{\footnotesize
\begin{align}
0 \rightarrow H^{1}(\und{Hom}(\Omega_{\mathcal{C}}(\sum x_i),\OO_{\mathcal{C}}) \rightarrow Ext^{1}((\Omega_{\mathcal{C}}(\sum x_i),\mathcal{O}_\mathcal{C}) \rightarrow H^{0} (\und{Ext^1}(\Omega_{\mathcal{C}}(\sum x_i),\OO_{\mathcal{C}})) \rightarrow 0
\label{(6)}
\end{align}
}%

\noindent Here Hom and Ext are underlined to indicate that we are taking the sheaf Hom and Ext respectively. The rightmost group in the exact sequence has a canonical description as follows. Let $R,S,T$ denote the set of non-distinguished nodes of the curve, the set of distinguished nodes of the curve, and the set of nodes of the target, as above. Furthermore, given a node $x$ in the curve $\mathcal{C}$, let $\mathcal{C}^i_x, i=1,2$ denote the two components of $\mathcal{C}$ at $x$. Then we have

\begin{align*}
H^{0} (\und{Ext^1}(\Omega_{\mathcal{C}}(\sum x_i),\OO_{\mathcal{C}})) \cong \bigoplus_{\textup{nodes of C}} T_x\mathcal{C}^1_x \tensor T_x\mathcal{C}^2_x \cong \CC^{m'+m''}
\end{align*}

There is a diagonal map $\CC^{m} \rightarrow \CC^{m''}$, which simply sends the coordinate $e_y$ cooresponding to a node $y \in T$ to the sum of the coordinates corresponding to the nodes in $\mathcal{C}$ mapping to $y$, that is, $\sum_{x;f(x)=y} e_x$. This is in fact the map of tangent spaces of the map $\mathcal{A}^{m} \rightarrow \mathcal{A}^{m''}$ described above. Just as the tensor product $\bigoplus_{\textup{nodes of C}} T_x\mathcal{C}^1_x \tensor T_x\mathcal{C}^2_x$ describes intrinsically the part of the deformations of the curve that smooth the nodes, the group $\oplus_{i=1}^nH^0(\mathcal{W},N_{\mathcal{D}[i]/\mathcal{X}[i-1]}\tensor N_{\mathcal{D}[i]/\mathcal{X}[i]})$ describes the part of the deformations of $\mathcal{W}=\mathcal{X}[n]$ that smooth the nodes of $\mathcal{W}$. Here, $\mathcal{W}$ is obtained from $W$ by twisting along the divisor $D[i]$ by the integer $d_i$ via the map $t^{W/S}:N^{W/S} \rightarrow N$, just as $\mathcal{C}$ is obtained from $C$ via the map $r^{C/S}$. Then, the fiber diagram of lemma 4 implies:

\begin{cor}\label{cor1} The tangent space $\textrm{Def}(\tau(f))$ to $\mathcal{MB}$ is the fiber product

\begin{align*}
\xymatrix{\ar @{} [dr] |{}
\textrm{Def}(\tau(f)) \ar[d] \ar[r] & \CC^{m} \cong \bigoplus_{i=1}^{n} H^{0}(N_{\mathcal{D}[i]/\mathcal{X}[i-1]}\tensor N_{\mathcal{D}[i]/\mathcal{X}[i]}) \ar[d] \\
Ext^{1}((\Omega_{\mathcal{C}}(\sum x_i),\mathcal{O}_\mathcal{C}) \ar[r] & \CC^{m''} \cong \bigoplus_{\textup{distinguished nodes of C}} T_x\mathcal{C}^1_x \tensor T_x\mathcal{C}^2_x }
\end{align*}
\end{cor}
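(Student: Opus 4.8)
The plan is to deduce Corollary \ref{cor1} by combining the étale-local description of $\mathcal{MB}$ from Lemma \ref{lem2}'s successor — the statement that $\phi: \mathcal{MB}_f \to \mathfrak{M}^{\textup{logtw}} \times_{\mathcal{A}^{m''}} \mathcal{A}^{m}$ is étale — with the standard deformation-theoretic descriptions of the three stacks $\mathfrak{M}^{\textup{logtw}}$, $\mathcal{A}^{m''}$, and $\mathcal{A}^{m}$. Since $\phi$ is étale, the tangent space $\textrm{Def}(\tau(f))$ is canonically isomorphic to the tangent space of the fiber product at the image point, and tangent spaces commute with fiber products; so it suffices to identify each of the three tangent spaces together with the maps between them.

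The key steps, in order, are as follows. First, I would recall the identification $\mathfrak{M}^{\textup{logtw}} \cong \mathfrak{M}^{\textup{tw}}_{g,n}$ of Olsson cited above, so that the tangent space to $\mathfrak{M}^{\textup{logtw}}$ at the image of $f$ is the tangent space to the stack of twisted curves at the twisted curve $\mathcal{C}$, which is $Ext^1(\Omega_{\mathcal{C}}(\sum x_i), \OO_{\mathcal{C}})$; this is where the bottom-left corner of the square comes from. Second, I would compute the tangent space to $\mathcal{A}^{m''} = ([\Aff^1/\mathbb{G}_m])^{m''}$ at the point determined by the $m''$ nodal divisors $D_i$: deforming a line bundle with a section restricting to the zero section over $\Spec k$ gives, at each factor, a one-dimensional space (the coefficient $\alpha_i$ of $\epsilon$ in the section, exactly as computed in the proof of Lemma \ref{lem2}'s successor), so the tangent space is $\CC^{m''}$, and via the deformation-of-nodes dictionary this is $\bigoplus_{\text{distinguished nodes } x} T_x \mathcal{C}^1_x \tensor T_x \mathcal{C}^2_x$; this is identified with the image of the local-to-global map $Ext^1(\Omega_{\mathcal{C}}(\sum x_i), \OO_{\mathcal{C}}) \to H^0(\und{Ext^1}(\Omega_{\mathcal{C}}(\sum x_i), \OO_{\mathcal{C}}))$ of \ref{(6)} restricted to the distinguished nodes — this gives the bottom arrow of the square, and identifies the bottom-right corner. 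Third, I would do the analogous computation for $\mathcal{A}^m$: the tangent space is $\CC^m$, and by the explicit construction of the expansions $X[n]$ recalled in the proof of Lemma 3 (each $D[i] \times 0$ blown up), the smoothing parameter of the $i$-th node of $\mathcal{W}$ is identified with a section of $N_{\mathcal{D}[i]/\mathcal{X}[i-1]} \tensor N_{\mathcal{D}[i]/\mathcal{X}[i]}$, giving the top-right corner. Finally, I would check that the right-hand vertical map $\CC^m \to \CC^{m''}$ induced by $\phi$ composed with projection is exactly the diagonal assignment $e_y \mapsto \sum_{x: f(x)=y} e_x$ — this is the content of Remark 4 (all curve-nodes over a given target-node smooth with the same speed as the target-node) and follows from the commutativity of the diagrams (2)–(7) in the proof of the étale-ness of $\phi$, where the single unit-parameter $\alpha$ feeding the target-node deformation is forced to equal all the parameters $\alpha_i$ attached to the curve-nodes lying over it.

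Assembling: the tangent space to $\mathfrak{M}^{\textup{logtw}} \times_{\mathcal{A}^{m''}} \mathcal{A}^{m}$ is the fiber product of $Ext^1(\Omega_{\mathcal{C}}(\sum x_i), \OO_{\mathcal{C}}) \to \CC^{m''} \leftarrow \CC^m$ with the two maps just identified, and by étale-ness of $\phi$ this is $\textrm{Def}(\tau(f))$, which is exactly the claimed square. I expect the only genuinely delicate point to be the bookkeeping that pins down the bottom arrow $Ext^1(\Omega_{\mathcal{C}}(\sum x_i), \OO_{\mathcal{C}}) \to \CC^{m''}$ as the composite of the local-to-global edge map of \eqref{(6)} with the projection onto the distinguished-node summands, and the matching verification that the right vertical map is the diagonal; both amount to unwinding the chart-level formulas $e_i \mapsto (\Gamma_i e, u_i)$ and $e \mapsto (de, 1)$ already spelled out in the proof of the preceding lemma, together with the elementary fact that over $[\Aff^1/\mathbb{G}_m]$ the tangent direction records precisely the speed of vanishing of the section. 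Everything else is formal once one invokes: (i) étale morphisms induce isomorphisms on tangent spaces, (ii) tangent spaces commute with $2$-fiber products of smooth stacks, and (iii) the local-to-global $Ext$ spectral sequence \eqref{(6)}.
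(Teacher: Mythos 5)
Your proposal is correct and follows the same route the paper takes: invoke the étale-ness of $\phi$ from the preceding lemma, pass to tangent spaces using the local-to-global $Ext$ sequence \eqref{(6)} for $\mathfrak{M}^{\textup{logtw}}\cong\mathfrak{M}^{\textup{tw}}_{g,n}$, identify the tangent spaces of $\mathcal{A}^{m}$ and $\mathcal{A}^{m''}$ with the respective smoothing parameters, and observe that the tangent map of $\mathcal{A}^m\to\mathcal{A}^{m''}$ is the diagonal. The paper does not isolate these verifications into a formal proof but presents them in the paragraph preceding the corollary; your write-up is a faithful unwinding of exactly that discussion.
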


The proof of lemma 5 also allows us to understand the automorphism group $\mathfrak{aut}(\tau(f)):$

\begin{cor}
An infinitesimal automorphism of $f$ in $\mathcal{MB}$ consists of an infinitesimal automorphism of the source curve $C$ fixing the marked points and an infinitesimal automorphism of $W$ in $\mathcal{B}$: 
\begin{align*}
\mathfrak{aut}(\tau(f)) = \mathfrak{aut}(C,\vec{x}) \oplus \mathfrak{aut}_\mathcal{B}(W) = \mathfrak{aut}(C,\vec{x}) \oplus \CC^{m}
\end{align*}
\end{cor}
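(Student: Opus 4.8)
The plan is to read this off from the proof of the preceding lemma, in parallel with the way the description of $\textrm{Def}(\tau(f))$ was obtained in the preceding corollary. First, conceptually: the morphism
$$\phi\colon \mathcal{MB}_f \longrightarrow \mathfrak{M}^{\textup{logtw}}\times_{\mathcal{A}^{m''}}\mathcal{A}^{m}$$
is \'etale, hence formally \'etale, so it induces an isomorphism on the full first-order infinitesimal data at $\tau(f)$ — not only on tangent spaces, as was used for $\textrm{Def}$, but also on infinitesimal automorphism groups. Thus $\mathfrak{aut}(\tau(f))$ is the group of infinitesimal automorphisms of the image of $\tau(f)$ in the target, and for a $2$-fibre product of algebraic stacks this is the fibre product $\mathfrak{aut}(\mathcal{C})\times_{\CC^{m''}}\CC^{m}$ of the infinitesimal automorphism groups of the three factors, computed at the twisted curve $\mathcal{C}$ and at the points $(0,\dots,0)\in\mathcal{A}^{m},\mathcal{A}^{m''}$ (all distinguished nodes of $C$ and all nodes of $W$ persist over the geometric point, so the relevant sections vanish and these groups are $\mathrm{Lie}((\mathbb{G}_m)^{m})=\CC^{m}$ and $\CC^{m''}$).

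I would then identify the three factors. The $\CC^{m}$ coming from the $\mathcal{A}^{m}$-factor is canonically $\mathfrak{aut}_{\mathcal{B}}(W)$, because the composite $\mathcal{B}^{\textup{etw}}\to\mathcal{A}^{m}$ identifies $\mathrm{Aut}_{\mathcal{B}}(W)$ with the torus $(\mathbb{G}_m)^{m}$ scaling the $m$ bubble components of $W=X[m]$. For the twisted curve, using Olsson's isomorphism $\mathfrak{M}^{\textup{logtw}}\cong\mathfrak{M}^{\textup{tw}}$ one has $\mathfrak{aut}(\mathcal{C})=H^{0}(\mathcal{C},T_{\mathcal{C}}(-\sum x_i))$; since the coarsening $\mathcal{C}\to C$ is an isomorphism away from the distinguished nodes and the residual gerbes $\mu_{\Gamma_{ij}}$ there are finite \'etale, a vector field on $C$ vanishing at the marked points lifts uniquely to $\mathcal{C}$ and the stacky structure supports no first-order automorphism, so $\mathfrak{aut}(\mathcal{C})=H^{0}(C,T_{C}(-\sum x_i))=\mathfrak{aut}(C,\vec x)$.

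The remaining, and I expect the most delicate, point is that the fibre product over $\CC^{m''}$ imposes no condition, so that $\mathfrak{aut}(\tau(f))$ is the full direct sum and not a proper subspace. I would establish this by reading the explicit $\Spec{k}[\epsilon]$-analysis in the proof of the preceding lemma with the chosen deformations of $C$ and of $W$ set equal to the constant ones: there one sees that over the Artinian base the exponents $\Gamma_{ij}$ and $d_i$ relating the various smoothing parameters act invertibly on the unit parts of the log structures, so the ambient log-structure automorphisms $e\mapsto(e,1+c\epsilon)$ can be chosen freely and independently so as to make the squares of log structures commute. Hence an infinitesimal automorphism of $\tau(f)$ in $\mathcal{MB}$ is exactly an infinitesimal automorphism of $(C,\vec x)$ together with an independent one of $W$ in $\mathcal{B}$, the induced automorphisms of $N^{C/S}$, $N^{W/S}$ and $N$ being then uniquely determined — which is the asserted splitting. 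Together with the preceding corollary this determines both of the non-classical terms $\mathfrak{aut}(\tau(f))$ and $\textrm{Def}(\tau(f))$ in the exact sequence of Lemma 3, as required for the computation of $\mathcal{T}^{1}$ and $\mathcal{T}^{2}$.
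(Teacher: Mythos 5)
The paper's proof of this corollary does not pass through the \'{e}tale model of Lemma~5 at all. It argues directly on $\mathcal{MB}$: an infinitesimal automorphism of $\tau(f)$ over $k[\epsilon]$ is unwound as a triple (an infinitesimal automorphism of $(C,\vec x)$, an infinitesimal automorphism of $W$ in $\mathcal{B}$, an automorphism $A$ of the log structure $N$ compatible with the structure maps $s,t$), and then the compatibility of $A$ with the diagram for $t$, namely $A(t(e))=(de,(1+c\epsilon)^{d})=(de,1)=t(e)$, forces $c=0$, so the log automorphism contributes nothing. Your route is genuinely different, and unfortunately I do not think it goes through as written.

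There are two concrete problems. First, deducing the isomorphism on infinitesimal automorphism groups from \'{e}taleness of $\phi$ requires an isomorphism on $H^{-1}$ of the tangent complex, not just on tangent spaces; the proof of Lemma~5 in the paper checks only the tangent spaces, so the \'{e}taleness you invoke is strictly more than what that lemma establishes (and is not automatic for morphisms of Artin stacks --- compare $\mathcal{A}^{1}\to\mathcal{A}^{1}$, $x\mapsto x^{2}$, which is an isomorphism on automorphism Lie algebras but not on tangent spaces). Second, and more seriously, the fibre product you write down cannot equal the asserted direct sum. If $\mathfrak{aut}(\mathcal{C})=\mathfrak{aut}(C,\vec x)$, as you argue (and I agree: the compatibility $\psi\circ r=r\circ\sigma^{\ast}$ with $r(e_{i})=(\Gamma_{i}e_{i},u_{i})$ determines the $N'$-automorphism $\psi$ uniquely from the curve automorphism $\sigma$, since $\Gamma_{i}$ is invertible in $k[\epsilon]$), and the map on Lie algebras $\CC^{m}\to\CC^{m''}$ induced by $\mathcal{A}^{m}\to\mathcal{A}^{m''}$ is the injective diagonal map (which is what the paper's own computation $a\mapsto(a,a)$ for $m=1$, $m''=2$ gives), then the fibre product $\mathfrak{aut}(\mathcal{C})\times_{\CC^{m''}}\CC^{m}$ sits inside $\mathfrak{aut}(C,\vec x)$ as a subspace and so has dimension at most $\dim\mathfrak{aut}(C,\vec x)$, strictly less than $\dim\mathfrak{aut}(C,\vec x)+m$ as soon as $m>0$. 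The appeal to ``the exponents $\Gamma_{ij}$ and $d_{i}$ acting invertibly'' does not help: invertibility of the exponents is exactly what pins $\psi$ down from $\sigma$ and is the source of the constraint, not a mechanism for freedom. So the delicate Step 4 you flag is not merely unverified; as stated it is contradicted by the dimension count, which means either the factor $\mathfrak{aut}(\mathcal{C})$ has to be re-identified, or the reduction to the \'{e}tale model is not the right mechanism here. The paper avoids this entirely by never using the fibre-product presentation for the automorphism computation, and you should too.
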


\begin{proof} We give the details for the case when two nodes of $C$ map into a node of $W$, as in lemma 5. The general case reduces to this as in the proof of lemma 2. The group of infinitesimal automorphisms of $\tau(f)$ consists of the group of automorphism of the trivial extension of $\tau(f)$ over $k[\epsilon]$ lying over the identity automorphism of $\tau(f)$. This is an infinitesimal automorphism of $C$, an infinitesimal automorphism of $W$ in $\mathcal{B}$, which consists of a copy of $\CC$ for each expanded component of $W$, and an automorphism $A$ of the log structure $N = \NN \oplus k[\epsilon]^*$ which lies over the identity automorphism over $\Spec k$ and which is compatible with both diagrams 

\begin{align*}
\xymatrix{\ar @{} [ldr] |{}
N^{C/S} = \NN^2 \oplus k[\epsilon]^{*} \ar[r]^-{s} \ar[rd] & N = \NN \oplus k[\epsilon]^{*} \ar[d] \\
&k[\epsilon]  }, \xymatrix{\ar @{} [ldr] |{}
N^{W/S} = \NN \oplus k[\epsilon]^{*} \ar[r]^-{t} \ar[rd] & N = \NN \oplus k[\epsilon]^{*} \ar[d] \\
&k[\epsilon]  }
\end{align*}

\noindent Here $s(e_i)= (\Gamma_ie,u_i)$, $t(e) = (de,1)$ as always. An automorphism $A$ of $N$ must map the generator of $\NN$ $e \mapsto (e,(1 + c\epsilon))$ for some $c \in k$ if it is to reduce to the identity over $\Spec k$.  In order for the automorphism to be compatible with the second diagram, that is, in order for

\begin{align*}
\xymatrix{\ar @{} [ldr] |{}
\NN \oplus k[\epsilon]^{*} \ar[r]^t \ar[d]_{=} & \NN \oplus k[\epsilon]^{*} \ar[d]^{A} \\
\NN \oplus k[\epsilon]^{*} \ar[r]^t& \NN \oplus k[\epsilon]^{*}},  
\end{align*}

\noindent to commute, we must have $At(e) = (de, (1+c\epsilon)^d)=(de, 1+cd\epsilon) = (de,1) = t(e)$, which implies that $c=0$. So the logarithmic structures contribute no infinitesimal automorphisms. 
\end{proof} 

\end{section}

\begin{section}{The Virtual Localization Formula}
We are now in a position to derive the virtual localization formula for $\Kim$, in the case when the pair $(X,D)$ carries a $T=\CC^{*}$-action leaving $D$ pointwise fixed. The ideas of this section can essentialy be found in the paper of Graber-Vakil \cite{GV}. For the convenience of the reader, we recall the form of relative virtual localization formulas and refer the reader to the paper of Graber-Pandharipande \cite{GP} for details. \\

\begin{subsection}{Graber-Pandharipande Virtual Localization} Suppose $M$ is a DM stack with a $\CC^{*}$-action equipped with a $\CC^{*}$-equivariant perfect obstruction theory $C^{\bullet} \rightarrow \mathcal{L}_{M}$. Let $F_a$ denote the connected components of the fixed locus of $M$, which we refer to as the fixed loci for brevity, and denote the natural inclusion by $i_a:F_a \rightarrow M$. The relative virtual localization formula reads    

\begin{align*}
\int_{[M]^{\textup{vir}}}{\omega} = \sum_{\alpha} \int_{[F_{\alpha}]^{\textup{vir}}}{\frac{i_{\alpha}^{*}\omega}{e^T(N_F^{\textup{vir}})}}
\end{align*}

\noindent Here $\omega$ is a class in $A_T^{*}(M)$, the equivariant Chow ring of $M$ (or in equivariant cohomology). The integral $\int_{[M]^{\textup{vir}}}$ is the proper pushforward map from $A_T^{*}(M) \rightarrow A_T^{*}(pt)=A^{*}(BT)\cong \CC[u]$. The term $e^{T}$ denotes the equivariant Euler class of a vector bundle, in this case, of the virtual normal bundle $N_{F_{\alpha}}^{\textup{vir}}$ of $F_{\alpha}$ in $M$. The qualification that the normal bundle is virtual means that we are not only taking the ordinary normal bundle in the tangent space, but rather that also keeping track of the obstruction bundle. More precisely, $N_F^{\textrm{vir}}$ is defined as $\mathcal{T}_1^m - \mathcal{T}_2^m$, the moving part of the tangent space minus the moving part of the obstruction space (the moving part of a representation is the subrepresentation where $T$ acts non-trivially). The Euler class of a sum of vector bundles is by definition the product of the Euler classes; the Euler class of the difference is thus the quotient. Finally, the virtual fundamental class of a fixed locus $[F]^{\textrm{vir}}$ is by definition the virtual fundamental class arising from the fixed part of the tangent/obstruction theory: $\mathcal{T}_1^f - \mathcal{T}_2^f$. Therefore, in order to give a localization formula we must identify the fixed loci and calculate the classes $e^T(N_F^{\textup{vir}})$ for each of them. \\

\end{subsection}

\begin{subsection}{Types of Fixed Loci}
Consider a pair $(X,D)$ as above. This defines a stack of expanded targets $\mathcal{B}$, as explained in remark x. We fix discrete data $\Gamma = (g, m, h, \vec{c}=(c_1,\cdots,c_h))$, consisting of the genus of a curve, $m+h$ marked points, and the contact orders of the last $h$ marked points at the divisor at infinity $D[n]$ of the target $W=X[n]$. In this section we study the fixed loci of $\Kim = M$. In the localization formula we will distinguish between two different types of fixed loci; the first type consists of morphisms to $X$ itself, rather than an expansion of $X$. Such a locus is much simpler to understand than a general locus. The other type of fixed loci consist of those with targets $W=X[n]$ with $n > 0$. The idea of the localization formula is to express the virtual fundamental classes of these loci recursively, in terms of the simple loci and moduli spaces of log stable maps to the expanded part of $W$ only. We formalize this below.

\begin{definition}
A fixed locus $F \subset M$ is called \emph{simple} if the general, hence every, element $f \in F$ has target $W = X[0] = X$. We denote a simple fixed locus with discrete data $\Gamma$ by $\Kim^{\textup{sim}} = M^{\textup{sim}}$.  
\end{definition}

\noindent Similarily we define composite loci: 

\begin{definition}
A fixed locus $F \subset M$ is called \emph{composite} if the elements $f \in F$ map to targets $W=X[n]$ with $n >0$. 
\end{definition}

The simple loci $\Kim^{\textup{sim}}$ are open substacks of $\Kim$; therefore, all results of section 4 apply to the simple loci without change. \\

To understand composite loci, we need to understand log stable maps to the expanded part of $W = X[n]$, that is, to $Y = \cup_{i=1}^{n} X[i]$. The formal definition of such a log stable map is identical to the one given in Definition 5. The only differences in the theory of such maps arise from the fact that the rigidifying map to $X$ is much simpler, contracting all components to $D$. In the case of $X = \PP^1$, which was the first case to be studied in the literature, the rigidifying map to $X$ becomes trivial. We thus call these maps unrigidified log stable maps. We denote the stack parametrizing the expanded part of expansions of $X$ by $\mathcal{B}^{\sim}$ and its universal family by $\mathcal{U}^{\sim}$; similarily to $\mathcal{B}$, the stack $\mathcal{B}^{\sim}$ is isomorphic to the open substack of $\mathfrak{M}_{0,2}$ where the first marking is on the first component and the second marking is on the last component - see remark 2. We will abusively denote the space of log stable maps to targets in $\mathcal{B}^{\sim}$ by $\Kim^{\sim}$. The analogous stack of unrigidified relative stable maps $\Li^{\sim}$ is introduced and studied in \cite{GV}. 

\begin{remark} The stack $\Kim^{\sim}$ is very similar and often simpler than $\Kim$. For instance, observe that 
\begin{itemize}
\item The minimality condition in Definition 5 is actually simpler: it simply requires that the log curve $(C,M) \rightarrow (S,N)$ is minimal, since it is not possible to have a node of the target with no distinguished node mapping to it anymore. \\
\item The analogous map $\Kim^{\sim} \rightarrow \Li^{\sim}$ of lemma 2 is also finite, as the map to $X$ is never required in the proof of lemma 2. \\
\item Theorem 1 applies without change. This is because it is shown in \cite{GV} that $\Li^{\sim}$ satisfies the property SE, so Lemma 1 applies. \\
\end{itemize}
\end{remark}

For the purposes of localization, we need to understand the deformation theory of $\Kim^{\sim}$ carefully. This is essentially the same as that of $\Kim$. Specifically, introduce the analogue of the stack $\mathcal{MB}$, 

\begin{align*}
\mathcal{MB}^{\sim} = \mathfrak{M}_{g,n}^{\textup{log}} \times_{\textup{LOG}} (\mathcal{B}^{\sim})^{\textup{etw}}
\end{align*}

\noindent and its canonical morphism 

\begin{align*}
\tau^{\sim}: \Kim^{\sim} \rightarrow \mathcal{MB}^{\sim} 
\end{align*}

\noindent The analogues of lemmas 3,5 and of corollaries 1,2 are then as follows:

\begin{lem} Over a geometric point $f:(C,M_C)/(k,N) \rightarrow (Y,M_Y)/(k,N)$, the tangent space $\mathcal{T}^1$ and obstruction space $\mathcal{T}^2$ of $M^{\sim}$ fit into an exact sequence
\begin{align*}
0 \rightarrow \mathfrak{aut}(\tau^{\sim}(f)) \rightarrow H^{0} (f^{*}T^{\log}_Y) \rightarrow \mathcal{T}^1 \rightarrow \textrm{Def}(\tau^{\sim}(f)) \rightarrow H^{1}(f^{*}T^{\log}_Y) \rightarrow \mathcal{T}^2 \rightarrow 0.
\end{align*}
\end{lem}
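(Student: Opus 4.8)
The plan is to mirror the construction of the absolute perfect obstruction theory for $M = \Kim$ carried out earlier in Section 4, applied verbatim to $M^{\sim} = \Kim^{\sim}$, and then to take cohomology of the resulting distinguished triangle. Concretely, I would first observe that the forgetful morphism $\tau^{\sim}: M^{\sim} \to \mathcal{MB}^{\sim}$ is strict (for the same reason $\tau$ is), and that $\mathcal{MB}^{\sim}$ is smooth, so $\mathcal{L}_{\mathcal{MB}^{\sim}}$ is concentrated in degrees $0$ and $1$. Then I would invoke the analogue of Kim's construction (Section 7 of \cite{Kim}, which applies to targets in $(\mathcal{B}^{\sim})^{\textup{etw}}$ with no change, since the definition of log stable map to such targets is identical to Definition \ref{def5}): there is a relative perfect obstruction theory
\begin{align*}
(E^{\sim})^{\bullet} = \bigl(R\pi_{*}f^{*}T^{\log}_{\mathcal{U}^{\sim}/\mathcal{B}^{\sim}}\bigr)^{\vee} \to \mathcal{L}^{\log}_{M^{\sim}/\mathcal{MB}^{\sim}} = \mathcal{L}_{M^{\sim}/\mathcal{MB}^{\sim}},
\end{align*}
where $\pi: \mathcal{C}^{\textup{univ}} \to M^{\sim}$ is the universal curve and $f$ the evaluation map. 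Filling in the cone of $(E^{\sim})^{\bullet}[-1] \to (\tau^{\sim})^{-1}\mathcal{L}_{\mathcal{MB}^{\sim}}$ exactly as before produces an absolute perfect obstruction theory for $M^{\sim}$.

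Next I would apply $R\Hom(-,\mathcal{O}_{M^{\sim}})$ and restrict to a geometric point $f:(C,M_C)/(k,N) \to (Y,M_Y)/(k,N)$. The long exact sequence of the triangle gives
\begin{align*}
0 \to H^{0}\bigl((E^{\sim})^{\vee}\bigr) \to \mathcal{T}^1 \to \textup{Def}(\tau^{\sim}(f)) \to H^{1}\bigl((E^{\sim})^{\vee}\bigr) \to \mathcal{T}^2 \to 0,
\end{align*}
together with the identification of the kernel of $H^0((E^{\sim})^\vee) \to \mathcal{T}^1$ with $\mathfrak{aut}(\tau^{\sim}(f))$. It then remains to identify $H^{k}((E^{\sim})^{\vee})$ with $H^{k}(f^{*}T^{\log}_Y)$ for $k = 0,1$. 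This is where I would use the analogue of Lemma 4 (the identity $T^{\log}_W = \pi^* T^{\log}_X$): the universal target $\mathcal{U}^{\sim} \to \mathcal{B}^{\sim}$ is again obtained by a sequence of blowups along divisors, hence the relative log tangent bundle is pulled back appropriately, and by cohomology-and-base-change $R\pi_{*}f^{*}T^{\log}_{\mathcal{U}^{\sim}/\mathcal{B}^{\sim}}$ restricts at the geometric point to $R\Gamma(C, f^{*}T^{\log}_Y)$. Dualizing back gives the claimed four terms.

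The main obstacle — really the only nontrivial point — is verifying that Kim's relative obstruction theory construction genuinely goes through for targets in $(\mathcal{B}^{\sim})^{\textup{etw}}$, i.e. that nothing in Section 7 of \cite{Kim} uses the presence of the non-expanded component $X = X[0]$ or the rigidifying map to $X$ in an essential way. I expect this to be routine: the construction is local on the target and only sees the log structure and the contraction map, both of which behave identically for $\mathcal{B}^{\sim}$ (indeed the contraction map is simpler, collapsing everything to $D$, as noted in the discussion preceding the lemma). Everything else — strictness of $\tau^{\sim}$, smoothness of $\mathcal{MB}^{\sim}$, the cone construction, and the long exact sequence — is formal and identical to the treatment of $M$. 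One should also remark that the minimality condition for $M^{\sim}$ is the simpler one (only minimal log curves occur, since every target node now carries a distinguished node), so the bookkeeping of the local log structures is a strict simplification of the case handled in Lemma 3, not a complication.
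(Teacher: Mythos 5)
Your proposal is correct and matches the paper's own proof: both invoke the relative perfect obstruction theory $(E^{\sim})^{\bullet} = (R\pi_{*}f^{*}T^{\log}_{\mathcal{U}^{\sim}/\mathcal{B}^{\sim}})^{\vee} \to \mathcal{L}_{M^{\sim}/\mathcal{MB}^{\sim}}$, note that Kim's construction in Section 7 of \cite{Kim} applies to any stack of FM spaces (including unrigidified expansions), and then reuse the cone construction and long exact sequence from Section 4. The only cosmetic difference is that you spell out the intermediate steps (strictness of $\tau^{\sim}$, smoothness of $\mathcal{MB}^{\sim}$, identification of $H^{k}((E^{\sim})^{\vee})$ with $H^{k}(f^{*}T^{\log}_Y)$) which the paper leaves implicit as ``the discussion of section 4 applies.''
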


\begin{proof} The deformation theory of $\Kim^{\sim}$ is the same as in section 4, induced by the relative perfect obstruction theory

\begin{align*}
(E^{\sim})^{\bullet} = (R\pi_{*}f^{*}T_{\mathcal{U^{\sim}}^{\textrm{etw}}/\mathcal{B^{\sim}}^{\textrm{etw}}}^{\textup{log}})^{\vee} \rightarrow \mathcal{L}^{\log}_{M^{\sim}/\mathcal{MB^{\sim}}}
\end{align*}

\noindent where $M$ has been replaced by $M^{\sim}$ and $\mathcal{B}$ by $\mathcal{B}^{\sim}$. This is true since Kim's proof works for any stack of Fulton-Macpherson spaces, thus, expansions and unrigidified expansions alike. Therefore, the discussion of section 4 applies as well, which results in the six term exact sequence. \end{proof}

\noindent Furthermore, with the notations of corollary \ref{cor1}, we have:  

\begin{cor} The tangent space $\textrm{Def}(\tau^{\sim}(f))$ to $\mathcal{MB}$ is the fiber product

\begin{align*}
\xymatrix{\ar @{} [dr] |{}
\textrm{Def}(\tau^{\sim}(f)) \ar[d] \ar[r] & \CC^{m} \cong \bigoplus_{i=2}^{n} H^{0}(N_{\mathcal{D}[i]/\mathcal{X}[i-1]}\tensor N_{\mathcal{D}[i]/\mathcal{X}[i]}) \ar[d] \\
Ext^{1}((\Omega_{\mathcal{C}}(\sum x_i),\mathcal{O}_\mathcal{C}) \ar[r] & \CC^{m''} \cong \bigoplus_{\textup{distinguished nodes of C}} T_x\mathcal{C}^1_x \tensor T_x\mathcal{C}^2_x }
\end{align*}

\end{cor}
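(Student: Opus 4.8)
The strategy is to transcribe Section 4 almost word for word, with $\mathcal{B}$ replaced by $\mathcal{B}^{\sim}$, the target $W$ by $Y=\cup_{i=1}^{n}X[i]$, and $\tau$ by $\tau^{\sim}$. This is legitimate because every construction in Section 4 used only that $\mathcal{B}$ is a stack of twisted expanded log FM type spaces, never the specific geometry of $X$ or of the rigidifying map $W\to X$; and $\mathcal{B}^{\sim}$, being the open substack of $\mathfrak{M}_{0,2}$ recalled above, is again such a stack. In particular $\mathcal{MB}^{\sim}=\mathfrak{M}_{g,n}^{\textup{log}}\times_{\textup{LOG}}(\mathcal{B}^{\sim})^{\textup{etw}}$ is smooth, so $\textrm{Def}(\tau^{\sim}(f))$ is its Zariski tangent space at $\tau^{\sim}(f)$, and Lemma 6 already provides the six-term sequence into which it fits.

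First I would produce, \'etale-locally around $\tau^{\sim}(f)$, the analogue $\phi^{\sim}$ of the morphism $\phi$ of Section 4,
\[
\phi^{\sim}\colon\mathcal{MB}^{\sim}_{f}\longrightarrow\mathfrak{M}^{\textup{logtw}}\times_{\mathcal{A}^{m''}}\mathcal{A}^{m},
\]
built exactly as there: the forgetful morphism to $\mathfrak{M}_{g,n}^{\textup{log}}$, followed by the local morphism $\mathfrak{M}_{g,n}^{\textup{log}}\to\mathfrak{M}_{g,n}^{\textup{logtw}}$ obtained by factoring $s^{C/S}$ through $r^{C/S}$ as in formula \ref{eq:1'}, together with the morphisms $\mathfrak{M}_{g,n}^{\textup{logtw}}\to\mathcal{A}^{m''}$ and $(\mathcal{B}^{\sim})^{\textup{etw}}\to\mathcal{A}^{m}$ coming from the divisors cut out by the nodes of the source curve and by the nodes of the target. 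Here $m''=|S|$ is the number of distinguished nodes of $C$ and $m=|T|$ is the number of nodes of $Y$; since $Y$ is a chain of $n$ components, $m=n-1$.

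Next I would show that $\phi^{\sim}$ is \'etale, by repeating verbatim the deformation-theoretic computation that proved $\phi$ \'etale. As there, one first reduces to the case $m''=2$, $m=1$, $m'=0$ exactly as in the proof of Lemma \ref{lem2}; in that case the local models for $N^{C/S},N^{W/S},N$ and for the maps $s,t$ between them are literally identical to the ones in Section 4 — the rigidifying contraction $Y\to X$ plays no part in any computation over $\Spec{k[\epsilon]}$ — so the same enumeration of first-order deformations of $\tau^{\sim}(f)$ (a deformation $C'$ of $C$ together with the scalars $\alpha$ and $v_i$, with either the $v_i$ or the $\beta_i$ eliminated by an isomorphism) matches the enumeration of first-order deformations of the image triple, and $\phi^{\sim}$ is an isomorphism on tangent spaces, hence \'etale since source and target are smooth.

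Finally, since $\phi^{\sim}$ is \'etale, $\textrm{Def}(\tau^{\sim}(f))$ is the tangent space at the image point of $\mathfrak{M}^{\textup{logtw}}\times_{\mathcal{A}^{m''}}\mathcal{A}^{m}$, i.e. the fibre product of $Ext^{1}(\Omega_{\mathcal{C}}(\sum x_i),\mathcal{O}_{\mathcal{C}})$ and $\CC^{m}=T\mathcal{A}^{m}$ over $\CC^{m''}=T\mathcal{A}^{m''}$, where $\mathcal{C}$ is the twisted curve associated to $f$. The identification of $T\mathcal{A}^{m''}$ with $\bigoplus_{\textup{distinguished nodes}}T_x\mathcal{C}^1_x\otimes T_x\mathcal{C}^2_x$ via the local-to-global spectral sequence for $Ext$, the identification of $T\mathcal{A}^{m}$ with $\bigoplus_{i}H^{0}(N_{\mathcal{D}[i]/\mathcal{X}[i-1]}\otimes N_{\mathcal{D}[i]/\mathcal{X}[i]})$ over the internal divisors of the twisted target $\mathcal{Y}$, and the description of the diagonal map $\CC^{m}\to\CC^{m''}$ are all exactly as in Corollary \ref{cor1}; the one and only new point is that the internal divisors of $\mathcal{Y}=\cup_{i=1}^{n}\mathcal{X}[i]$ are $\mathcal{D}[2],\dots,\mathcal{D}[n]$ — the divisor $\mathcal{D}[1]$ gluing the expanded part to $X$ is no longer a node of the target — so the index runs from $2$ to $n$, producing the stated diagram. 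I expect the only (very mild) obstacle to be keeping this index bookkeeping straight and checking that no term associated to $\mathcal{D}[1]$ survives in the unrigidified setting.
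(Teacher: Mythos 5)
Your proposal is correct and follows essentially the same route as the paper: the paper's proof simply observes that Lemma 5 (the \'etale-ness of $\phi$) applies verbatim because the map to $X$ is used nowhere in its proof, and then deduces the fiber-product description exactly as in Corollary \ref{cor1}; your write-up just spells out that verbatim transfer, including the correct count $m=n-1$ and the index range $i=2,\dots,n$ for the internal nodes of $\mathcal{Y}$.
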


\begin{proof} The corollary follows from lemma 5; the lemma applies verbatim, as the map to $X$ is required nowhere in the proof.  
\end{proof}

Finally, 

\begin{cor}
An infinitesimal automorphism of $f$ in $\mathcal{MB}^{\sim}$ consists of an infinitesimal automorphism of the source curve $C$ fixing the marked points and an automorphism of $W$ in $\mathcal{B}^{\sim}$: 
\begin{align*}
\mathfrak{aut}(\tau(f)) = \mathfrak{aut}(C,\vec{x}) \oplus \mathfrak{aut}_{\mathcal{B}^{\sim}}(W) = \mathfrak{aut}(C,\vec{x}) \oplus \CC^{m}
\end{align*}
\end{cor}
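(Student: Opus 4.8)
The plan is to mimic the proof of Corollary 2 (the rigidified version), since the statement is the unrigidified analogue and the argument should transfer almost verbatim; the key point, as stated in the proofs of Corollary 4 and Lemma 6', is that the contraction map to $X$ plays no role in any of these deformation-theoretic computations. First I would reduce, exactly as in the proof of Lemma 2, to the local situation where the target $W$ has a single node and either two distinguished nodes of $C$ map to it, or $C$ has no node mapping to it (the minimality bookkeeping is even simpler here, as noted in Remark 4, since every node of $Y$ must have a distinguished node mapping to it). An infinitesimal automorphism of $\tau^{\sim}(f)$ in $\mathcal{MB}^{\sim}$ is, by definition of $\mathcal{MB}^{\sim}$ as a fiber product over $\textup{LOG}$, a triple consisting of an infinitesimal automorphism of the marked log curve $(C,M_C)/(k,N)$, an infinitesimal automorphism of the FM space $(W,M_W)/(k,N)$ in $(\mathcal{B}^{\sim})^{\textup{etw}}$, and an automorphism $A$ of the log structure $N = \NN \oplus k[\epsilon]^{*}$ reducing to the identity over $\Spec k$, compatible with both structure maps $s = s^{C/S}$ and $t = t^{W/S}$.

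Next I would run the same computation as in Corollary 2: write $A: e \mapsto (e, 1+c\epsilon)$ for the generator $e$ of $\NN$, with $c \in k$, since $A$ must reduce to the identity over $\Spec k$. Compatibility with the diagram for $t$, where $t(e) = (de,1)$, forces $At(e) = (de,(1+c\epsilon)^d) = (de, 1+cd\epsilon)$ to equal $(de,1) = t(e)$, hence $cd = 0$. Here I would invoke the minimality condition from Definition 5 together with the simpler form it takes for unrigidified maps (Remark 4): there is always a distinguished node mapping to each node of $Y$, and minimality forces the relevant contact data to have no common factor, which in the reduced local case pins down $d$ and forces $c=0$. Thus $A$ is trivial, so the log structure contributes no infinitesimal automorphisms, and $\mathfrak{aut}(\tau^{\sim}(f))$ splits as the automorphisms of $(C,\vec{x})$ fixing the marked points plus the automorphisms $\mathfrak{aut}_{\mathcal{B}^{\sim}}(W)$ of $W$ in $\mathcal{B}^{\sim}$.

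Finally I would identify $\mathfrak{aut}_{\mathcal{B}^{\sim}}(W) \cong \CC^{m}$: an infinitesimal automorphism of $W = X[n]$ as an object of $\mathcal{B}^{\sim}$ (equivalently, of the open substack of $\mathfrak{M}_{0,2}$ described in the excerpt) contributes one copy of $\CC$ for each expanded component, i.e. for each node of the target, hence $\CC^{m}$ where $m = |T|$ is the number of nodes of $W$; this is the fiberwise $\mathbb{G}_m$-scaling of each $\PP(N \oplus 1)$ bridge, and it is identical to the rigidified case since these scalings act trivially on $X$ anyway. The general case (arbitrarily many distinguished nodes mapping to possibly several target nodes) then reduces to the single-target-node case precisely as in the proof of Lemma 2, because the automorphism group of $N$ decomposes as a product over the nodes of the target. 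The only real subtlety — and the place I would be most careful — is making sure the minimality clause is invoked correctly in the unrigidified setting, since the case ``no node of $C$ maps to the original divisor $D[0]$'' no longer occurs; but this only makes the argument strictly easier, so I expect no genuine obstacle.
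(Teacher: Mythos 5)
Your proposal is correct and takes essentially the same route the paper intends: the corollary is stated without proof precisely because the computation of Corollary 2 --- the log-structure automorphism $A\colon e \mapsto (e,1+c\epsilon)$ killed by compatibility with $t(e)=(de,1)$, after reducing to one target node as in Lemma 2 --- nowhere uses the contraction to $X$, and that is exactly the argument you reproduce, together with the identification $\mathfrak{aut}_{\mathcal{B}^{\sim}}(W)\cong \CC^{m}$ by the fiberwise scalings of the expanded components. One minor remark: your appeal to minimality to conclude $c=0$ is superfluous (and slightly misplaced), since $cd=0$ with $d\ge 1$ in characteristic zero already forces $c=0$; minimality is only genuinely needed in the finiteness argument of Lemma 2, where honest roots of unity $v^{\Gamma_i}=1$ appear rather than infinitesimal units.
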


The two main differences between $\Kim$ and $\Kim^{\sim}$ are the following: First, the action of $T=\CC^*$ on $\Kim^{\sim}$ is trivial. This is because dilation of each component of $Y= \cup_{i=1}^{n} X[i]$ by $c \in T$ is an automorphism of $Y$, and so the map $cf$ is isomorphic to $f$. Second, the log tangent bundle $T_Y^{\textup{log}}$ is trivial. \\

\end{subsection}

\begin{subsection}{Description of the fixed loci in terms of known stacks} 

Consider now an element $f \in \Kim$ in a composite fixed locus, mapping to a target $W=X[n]$. Define $C_1 = f^{-1}(X), C_2 = f^{-1}(Y)$ with $Y = \cup_{i=1}^n X[i]$ as above, and let $f_1:C_1 \rightarrow X, f_2:C_2 \rightarrow Y$ denote the restriction maps. Furthermore, $f_1$ and $f_2$ are naturally logarithmic maps, from logarithmic data determined only from $f$. Specifically, recall from definition \ref{def5} and remark \ref{rmk1} following it, that the additional information on the underlying map $\und{f}$ that make $f$ into a logarithmic map are: (1) A locally free sheaf of monoids $N = N_{q} \oplus N_{D_0}\oplus N_{D_1} \oplus \cdots N_{D_n}$, where $N_q$ is the locally free log structure with one factor of $\NN$ for each non-distinguished node of $C$, and $N_{D_i}$ is a rank 1 locally free log structure corresponding to the $i$-th node of $W$, and (2) homomorphisms 

\begin{align*}
N^{C/S} \rightarrow N, N^{W/S} \rightarrow N
\end{align*} 

\noindent We define a logarithmic structure $N_1$ on $S$ by taking the sub-log structure of $N$ corresponding to the non-distinguished nodes of $C_1$, together with the natural restriction homomorphisms 

\begin{align*}
N^{C_1/S} \rightarrow N_1, N^{X/S}=0 \rightarrow N_1
\end{align*}

\noindent This data turns $f_1$ into a logarithmic map. Similarily, define a log structure $N_2$ which is the sub-log structure of $N$ corresponding to the non-distinguished nodes of $C_2$ and the nodes $D[1],D[2],\cdots,D[n]$ of $Y$, together with the restriction homomorphisms

\begin{align*}
N^{C_2/S} \rightarrow N_2, N^{Y/S} \rightarrow N_2
\end{align*}

\noindent The logarithmic data from $f_1$ and $f_2$ are in fact almost equivalent to the logarithmic data of $f$. First, $N$ is determined by $N_1$ and $N_2$, as it is $N_1 \oplus N_2 \oplus N_{D_0}$, where $N_{D_0}$ is the locally free log structure corresponding to deformation of the node $D=D[0]$. The homomorphisms $N^{C/S} \rightarrow N, N^{W/S} \rightarrow N$ are determined from those of $N_1,N_2$, except for the part of the map $N^{C/S} \rightarrow N_{D_0}$ corresponding to the $k$ nodes over $D$. The only additional data $f$ carries is thus a homomorphism $\NN^k \rightarrow N_{D_0} = \NN \oplus O^{*}_{S}$, which corresponds to a collection of units $u_1,\cdots,u_k$ - see equation \ref{eq:1'}. Here $u_i$ is an $\alpha_i$-th root of unit, where $\alpha_i$ is the degree of $f$ at the $i$-th node over $D$. Note that on a composite fixed locus the node $D[0]$ has to persist. Furthermore, since $\mu_{\alpha_i}$ is a discrete group, and the units $u_i$ must vary continuously, the $u_i$ determined by $f$ are constant on the connected fixed locus containing $f$. Recall that by remark 6.3.1 of \cite{Kim}, the units $\vec{u}$ determine the same log map if they differ by multiplication by an element of $\mu_d$, where $d$ is the lcm of the $\alpha_i$. Briefly, then, the logarithmic data of $f$ is determined by the logarithmic data of $f_1$ and $f_2$ and the collection $\vec{u}=(u_1,\cdots,u_k) \in \prod \mu_{\alpha_i}/\mu_{d}$.  \\  

The discrete data $\Gamma = (g,h,m,\vec{c})$ also splits into two sets of discrete data: $\Gamma_1=(g_1, \vec{\alpha} = (\alpha_1,\cdots,\alpha_k), S_1, \beta_1)$, consisting of the genus of $C_1$, a partition describing the behavior of $f_1$ over $D=D[0]$, the subset $S_1$ of marked points on $C_1$, and the homology class $(f_1)_*[C_1]$; and $\Gamma_2 = (g_2, \vec{\alpha}, \vec{c}, S_2, \beta_2)$, consisting of the genus of $C_2$, the same partition $\vec{\alpha}$, the original partition $\vec{c}$ describing the behavior along the divisor at infinity, and the homology class determined by $f_2$. Then, the map $f_1$ belongs to $\Kims{\Gamma_1}$ and $f_2$ belongs to $\Kimc{\Gamma_2}$. \\

Summarizing, the data of $f$ is equivalent to the data $(f_1,f_2,\vec{u})$ of the pair of the maps and the units, which can be glued to a log stable map $f$ to $W$, provided that $f_1$ and $f_2$ map the marked points of $C_1$ and $C_2$ to the same points of the divisor $D$. In other words, if we denote by $F_{\Gamma_1,\Gamma_2,\vec{u}}$ the fixed locus for which $f_1 \in \Kims{\Gamma_1}$, $f_2 \in \Kimc{\Gamma_2}$, we get a map 

\begin{align*}
\Kims{\Gamma_1} \times_{D^k} \Kimc{\Gamma_2} \times \prod{\mu_{\alpha_i}/\mu_d}  \rightarrow F_{\Gamma_1,\Gamma_2,\vec{u}} 
\end{align*}

\noindent This map identifies $F_{\Gamma_1,\Gamma_2,\vec{u}}$ with the quotient of $\Kims{\Gamma_1} \times_{D^k} \Kimc{\Gamma_2} \times \prod{\mu_{\alpha_i}/\mu_d}$ by the group of automorphisms of the partition $\vec{\alpha}$, that is, the number of bijections $\phi:[k] \rightarrow [k]$ with $\alpha_{\phi(i)} = \alpha_i$. We write $Aut(\Gamma_1,\Gamma_2)$ for the order of this group. We thus have a description of the fixed loci in terms of known stacks. What is not a priori clear is the relation between the natural obstruction theory induced on $F_{\Gamma_1,\Gamma_2}$ from the obstruction theory of $\Kim$ and those of the two factors $\Kims{\Gamma_1},\Kimc{\Gamma_2}$. \\

Let us denote by $\mathcal{T}^1$ and $\mathcal{T}^2$ the tangent and obstruction space for $\Kim$ at a point $f$, as in lemma \ref{lem2}, and by $\mathcal{T}_i^1,\mathcal{T}_i^2$ the tangent and obstruction spaces of the two component maps $f_i$. Denote by $y_1,\cdots,y_k$ the nodes connecting $C_1$ with $C_2$, that is, the nodes over $D=D[0]$. Then $y_1,\cdots,y_k$ become marked points in $C_1,C_2$. 

\begin{lem}\label{lem7}
At the point $f\in F_{\Gamma_1,\Gamma_2,\vec{u}}$ we have the equality 
\begin{align*}
\mathcal{T}^1-\mathcal{T}^2 = \mathcal{T}_1^1+\mathcal{T}_2^1 - \mathcal{T}_1^2-\mathcal{T}_2^2 - (T_D)^k + H^{0}(N_{\mathcal{D}/\mathcal{X}} \tensor N_{\mathcal{D}/\mathcal{Y}})
\end{align*}
in $K$-theory
\end{lem}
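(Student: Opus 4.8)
The plan is to evaluate both sides in $K$-theory starting from the six-term exact sequence of Lemma 3. Taking the alternating sum in that sequence (for $f$, with target $W$) and in its counterparts (for $f_1$, whose target is the smooth variety $X$, so that the $\mathcal{MB}$-terms reduce to those of the associated twisted curve alone, and for $f_2$, via Lemma 6 and the unrigidified log tangent bundle $T^{\log}_Y$) gives, in $K$-theory,
\[
\mathcal{T}^1-\mathcal{T}^2=\chi(C,f^{*}T^{\log}_W)-\mathfrak{aut}(\tau(f))+\textrm{Def}(\tau(f)),
\]
together with the analogous identities for $\mathcal{T}^1_1,\mathcal{T}^2_1$ and $\mathcal{T}^1_2,\mathcal{T}^2_2$. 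So it suffices to compare, under the decompositions $C=C_1\cup C_2$ and $W=X\cup_D Y$, the Euler characteristics of the log tangent bundles and the classes $\textrm{Def}(\tau(\cdot))-\mathfrak{aut}(\tau(\cdot))$ term by term.

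For the Euler characteristics I would tensor the normalization sequence $0\to\OO_C\to\OO_{C_1}\oplus\OO_{C_2}\to\bigoplus_{j=1}^{k}\OO_{y_j}\to 0$ with the locally free sheaf $f^{*}T^{\log}_W$. By the lemma identifying $T^{\log}_W$ with $\pi^{*}T^{\log}_X$, and since $\pi$ restricts to the identity on $X=X[0]$ and contracts $Y$ onto $D$, one has $f^{*}T^{\log}_W|_{C_1}=f_1^{*}T^{\log}_X$ and $f^{*}T^{\log}_W|_{C_2}=f_2^{*}(T^{\log}_W|_Y)$. The residue sequence $0\to\OO_D\to T^{\log}_X|_D\to T_D\to 0$ and the relative log tangent sequence of the contraction $c:Y\to D$ show $[T^{\log}_W|_Y]=[\OO_Y]+[c^{*}T_D]=[T^{\log}_Y]$ in $K(Y)$ (in accordance with the triviality of $T^{\log}_Y$), whence $\chi(C_2,f^{*}T^{\log}_W|_{C_2})=\chi(C_2,f_2^{*}T^{\log}_Y)$; and each fibre $(f^{*}T^{\log}_W)_{y_j}$ equals $(T^{\log}_X)_{\pi f(y_j)}$, which by the same residue sequence has class $(T_D)_{\pi f(y_j)}+\CC$. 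Altogether
\[
\chi(C,f^{*}T^{\log}_W)=\chi(C_1,f_1^{*}T^{\log}_X)+\chi(C_2,f_2^{*}T^{\log}_Y)-(T_D)^k-\CC^k .
\]

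For the deformation–automorphism part I would use Corollary 1 and Corollary 2 (and their unrigidified analogues) together with the geometry of the composite fixed loci described above: the twisted curve $\mathcal{C}$ attached to $f$ is $\mathcal{C}_1\cup\mathcal{C}_2$ glued along the $k$ twisted distinguished nodes $y_1,\dots,y_k$ over $D=D[0]$, and the twisted target is $\mathcal{W}=\mathcal{X}\cup_{\mathcal{D}}\mathcal{Y}$. The nodes of $W$ are exactly the nodes of $Y$ together with the one extra node $D[0]$, so in Corollary 1 the factor $\bigoplus_i H^{0}(N_{\mathcal{D}[i]/\mathcal{X}[i-1]}\otimes N_{\mathcal{D}[i]/\mathcal{X}[i]})$ carries precisely one summand more, namely $H^{0}(N_{\mathcal{D}/\mathcal{X}}\otimes N_{\mathcal{D}/\mathcal{Y}})$, than the corresponding factor for $Y$; the distinguished nodes of $C$ are those of $C_2$ together with the $y_j$; and the standard gluing sequence for the nodal twisted curve $\mathcal{C}=\mathcal{C}_1\cup_{\vec y}\mathcal{C}_2$ expresses $\textrm{Ext}^{\bullet}(\Omega_{\mathcal{C}}(\sum x_i),\OO_{\mathcal{C}})$ in terms of the $\textrm{Ext}$-groups of $\mathcal{C}_1,\mathcal{C}_2$ (with the $y_j$ adjoined as marked points) up to the smoothing lines $T_{y_j}\mathcal{C}^1\otimes T_{y_j}\mathcal{C}^2$. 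Substituting these descriptions into the three expressions of the first paragraph, the $\CC^k$'s produced by the $k$ distinguished nodes over $D[0]$ and by the gluing of $\mathcal{C}$ cancel the leftover $-\CC^k$ of the Euler characteristic computation, the remaining curve Euler characteristics and $\textrm{Ext}$-terms reassemble into $\mathcal{T}^1_1-\mathcal{T}^2_1$ and $\mathcal{T}^1_2-\mathcal{T}^2_2$, and what survives is $-(T_D)^k+H^{0}(N_{\mathcal{D}/\mathcal{X}}\otimes N_{\mathcal{D}/\mathcal{Y}})$.

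The hard part will be exactly this final cancellation. One must keep simultaneous track of every one-dimensional contribution in the count — the residue lines $\CC^k$ in $T^{\log}_X|_D$, the node-smoothing lines $T_{y_j}\mathcal{C}^1\otimes T_{y_j}\mathcal{C}^2$, the $\CC$ per distinguished node produced by the fibre product in Corollary 1 and by Corollary 2, and the single extra target node $D[0]$ — and verify that they cancel not merely in dimension but $\CC^{*}$-equivariantly. This requires knowing the $\CC^{*}$-weights of each of these lines on the composite fixed locus, in particular that $N_{\mathcal{D}/\mathcal{X}}\otimes N_{\mathcal{D}/\mathcal{Y}}$ carries the trivial character, which is the $D$-semistability relation $N_{D/X}\otimes N_{D/Y}\cong\OO_D$ read equivariantly. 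Everything else is formal manipulation of the exact sequences above.
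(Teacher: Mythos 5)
Your overall route is the paper's route (the six-term sequences, the splitting of $\mathfrak{aut}$ and $\textrm{Def}$ via Corollaries 1--2 and the gluing of $Ext^{1}(\Omega_{\mathcal{C}}(\sum x_i),\mathcal{O}_{\mathcal{C}})$, and the normalization sequence for the Euler characteristics), and on the cohomology side you are in fact more scrupulous than the paper: the fibre of $f^{*}T^{\log}_W$ at a node over $D[0]$ is the full rank-$\dim X$ space $T^{\log}_X|_{D}$, of class $T_D+\CC$, so the normalization sequence genuinely produces $-(T_D)^k-\CC^k$, whereas the paper simply writes $(T_D)^k$ for the middle term. The gap is that you never perform the cancellation of this trivial $\CC^k$, and the partners you name cannot perform it. In the $\textrm{Def}$/$\mathfrak{aut}$ comparison the smoothing lines $T_{y_j}\mathcal{C}_1\otimes T_{y_j}\mathcal{C}_2$ occur exactly twice with opposite signs: once with $+$ from the gluing formula for $Ext^{1}$ of $\mathcal{C}=\mathcal{C}_1\cup\mathcal{C}_2$, and once with $-$ as the distinguished-node coordinates of $\CC^{m''}$ in the fibre product of Corollary 1; they cancel against each other, not against your $\CC^k$, and in any case they carry the nontrivial weight $w/\alpha_j$ (up to twisting), so equivariantly they could never absorb a trivial factor. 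The extra target node $D[0]$ contributes exactly the $H^{0}(N_{\mathcal{D}/\mathcal{X}}\tensor N_{\mathcal{D}/\mathcal{Y}})$ that must survive. So if you run your plan with Corollaries 1 and 2 taken at face value you end with an uncancelled $-\CC^k$, i.e.\ a formula contradicting the lemma (and making the virtual dimension jump on composite loci; test it on degree $d$ maps to $(\PP^1,\infty)$ with full tangency).

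The missing trivial $\CC^k$ has to come from the $\mathcal{A}$-factors in Lemma 5. Since $\mathcal{MB}$ is \'etale over $\mathfrak{M}^{\textup{logtw}}\times_{\mathcal{A}^{m''}}\mathcal{A}^{m}$, its tangent and automorphism spaces are those of a fibre product of stacks, computed by the exact sequence $0\to\mathfrak{aut}\to\mathfrak{aut}(\mathcal{C},\vec{x})\oplus\CC^{m}\to\CC^{m''}\to\textrm{Def}\to Ext^{1}\oplus\CC^{m}\to\CC^{m''}\to 0$, in which the automorphisms $\CC^{m''}$ of the $\mathcal{A}^{m''}$-point (trivial weight) appear; Corollary 1 and Corollary 2 quoted verbatim suppress exactly this term. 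Taking the difference of this count for $f$ and for $f_2$ leaves $+\CC^{k}$ with trivial weight, one copy for each node over $D[0]$, and that is what cancels your residue lines: geometrically, the residue direction of $T^{\log}_X$ along $D$ is the infinitesimal dilation of the normal bundle and is matched by the torus direction attached to the node $D[0]$ of the target. (The paper short-circuits this by declaring the fibre of $T^{\log}_X$ along $D$ to be $T_D$, which builds the compensation into the $\chi$-step.) Without this input --- i.e.\ without redoing the $\textrm{Def}$/$\mathfrak{aut}$ bookkeeping of $\mathcal{MB}$ as a homotopy fibre product instead of citing Corollaries 1--2 as stated --- your computation cannot close, so the proposal as written has a genuine gap at precisely the step you flag as the hard part.
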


\begin{proof} Let $g=\pi \circ f: C \rightarrow W \rightarrow X$ be the composition of $f$ with the contraction $W \rightarrow X$, and let $g_i =\pi \circ f_i$. By lemmas 2 and 3 combined we have that $\mathcal{T}^1,\mathcal{T}^2$ fit into the six term exact sequence

\begin{align*}
0 \rightarrow \mathfrak{aut}(\tau(f)) \rightarrow H^{0} (g^{*}T^{\log}_X) \rightarrow \mathcal{T}^1 \rightarrow \textrm{Def}(\tau(f)) \rightarrow H^{1}(g^{*}T^{\log}_X) \rightarrow \mathcal{T}^2 \rightarrow 0.
\end{align*}

We have two similar six term exact sequences for $f_i$ with $C$ replaced by $C_i$ and $g$ by $g_i$, and $\tau$ replaced by $\tau^{\sim}$ for $f_2$. Write $\vec{x}$ for the vector of marked points on $C$, as above, and $(S_i,y_1, \cdots, y_k)$ for the vector of marked points in $C_i$. Observe that

\begin{align*}
\mathfrak{aut}(C,\vec{x}) = \mathfrak{aut}(C_1,S_1,y_1,\cdots,y_k) \oplus \mathfrak{aut}(C_2,y_1,\cdots,y_k,S_2)
\end{align*}

\noindent -vector fields that vanish on the nodes and marked points of $C$ are simply vector fields that vanish on the nodes and marked points of $C_1$ and $C_2$ and the nodes connecting the two. Furthermore,

\begin{align*}
\mathfrak{aut}_{\mathcal{B}}(W) = \mathfrak{aut}_{\mathcal{B}^{\sim}}(Y)
\end{align*} 

\noindent and thus 

\begin{align*}
\mathfrak{aut}(\tau(f)) = \mathfrak{aut}(\tau(f_1)) \oplus \mathfrak{aut}(\tau^{\sim}(f_2))
\end{align*}

\noindent Similarily, from the local-to-global sequence \ref{(6)} we have an equality in $K$-theory 

\begin{align*}
Ext^{1}(\Omega_{\mathcal{C}}(\sum{x_i}),\mathcal{O}_\mathcal{C}) =& Ext^{1}(\Omega_{\mathcal{C}_1}(\sum{S_1} + y_1 +\cdots y_k),\mathcal{O}_{\mathcal{C}_1})\\ &\bigoplus  Ext^{1}(\Omega_{\mathcal{C}_2}(\sum{y_i} + \sum{S_2}),\mathcal{O}_{\mathcal{C}_2}) \\
&\bigoplus_{i=1}^{k}T_{y_i}\mathcal{C}_1 \tensor T_{y_i}\mathcal{C}_2 
\end{align*}

On the other hand, we have by Corollary \ref{cor1} that $\textrm{Def}(\tau(f))$ differs in $K$-theory from $Ext^{1}((\Omega_{\mathcal{C}}(\sum{x_i}),\mathcal{O}_\mathcal{C})$ by replacing $\bigoplus_{\textup{nodes over D[i]}} T_x\mathcal{C}^1_x \tensor T_x\mathcal{C}^2_x$ with $H^{0}(N_{\mathcal{D}[i]/\mathcal{X}[i-1]} \tensor N_{\mathcal{D}[i]/\mathcal{X}[i]})$. All nodes of $C$ persist as nodes over some target node in $C_1$ and $C_2$, except precisely the nodes over $D$, as $D$ is not a target node for either $f_1$ or $f_2$. Therefore, 

\begin{align*}
Def(C,x) = Def(\tau(f_1)) \oplus Def(\tau^{\sim}(f_2)) \oplus H^{0}(N_{\mathcal{D}/\mathcal{X}} \tensor N_{\mathcal{D}/\mathcal{Y}})
\end{align*} 

\noindent It remains to analyze the cohomology groups $H^{i}(g^{*}T^{\log}_X)$. We have the normalization sequence 

\begin{align*}
0 \rightarrow \mathcal{O}_{C} \rightarrow \mathcal{O}_{C_1} \oplus \mathcal{O}_{C_2} \rightarrow \bigoplus_{i=1}^k \mathcal{O}_{y_i} \rightarrow 0
\end{align*}

\noindent The log tangent bundle $T_X(-\log D)$ fits into the short exact sequence 

\begin{align*}
0 \rightarrow T_X(-\log D) \rightarrow T_X \rightarrow N_{D/X} \rightarrow 0
\end{align*}

\noindent and thus coincides at a point of $D$ with the tangent space at that point in $D$. Therefore, twisting the normalization sequence by $g^{*}T^{\log}_X$ and taking cohomology we get 

\begin{align*}
0 \rightarrow H^{0}(g^{*}T^{\log}_X) \rightarrow H^{0}(g_1^{*}T^{\log}_X) \oplus H^{0}(g_2^{*}T^{\log}_X) \rightarrow (T_D)^k \\
\rightarrow H^{1}(g^{*}T^{\log}_X) \rightarrow H^{1}(g_1^{*}T^{\log}_X) \oplus H^{1} (g_2^{*}T^{\log}_X) \rightarrow 0
\end{align*}

\noindent Therefore, the difference between $H^{0}(g^{*}T^{\log}_X) - H^{1}(g^{*}T^{\log}_X)$ and $\sum H^{0}(g_i^{*}T^{\log}_X) - H^{1}(g_i^{*}T^{\log}_X)$ is precisely $(T_D)^k$. Putting everything together yields the lemma.

\end{proof}

From the lemma it follows that in $K$-theory the vector bundles $\mathcal{T}^{1}-\mathcal{T}^2$ and $\sum \mathcal{T}_i^1 - \mathcal{T}_i^2$ differ by two bundles; the first is the bundle with fiber $(T_D)^k$, which may be identified with the pullback of the tangent bundle $(T_D)^k$ under the evaluation map $\Kims{\Gamma_1} \times_{D^k} \Kimc{\Gamma_2} \rightarrow D^k$; the second one is the bundle with fiber $H^{0}(N_{\mathcal{D}/\mathcal{X}} \tensor N_{\mathcal{D}/\mathcal{Y}})$. This is the line bundle $\mathcal{L}$ that parametrizes deformations of the node $D$; it may be identified with the pullback $p_1^{*}(\mathcal{L}_1) \tensor p_2^{*}(\mathcal{L}_2)$, where $p_1$ and $p_2$ are the projections of $\Kims{\Gamma_1} \times_{D^k} \Kimc{\Gamma_2}$ to the two factors, and $\mathcal{L}_i$ are the respective similar bundles. Note that $\mathcal{L}_1$ is a trivial bundle with nontrivial action, while $\mathcal{L}_2$ is a non-trivial bundle with trivial action. To keep consistent with existing literature, we write $e^T(\mathcal{L}_1)=\frac{w}{d},e^{T}(\mathcal{L}_2)=-\frac{\psi}{d}$. We then obtain: 

\begin{cor}
If $F = F_{\Gamma_1,\Gamma_2,\vec{u}}$ and $N_{\Gamma_1}=(\mathcal{T}_1^1-\mathcal{T}_1^2)^\textup{m}$, we have
\begin{align*}
e^{T}(N_F^{\textup{vir}}) = e^{T}(N_{\Gamma_1})(\frac{w-\psi}{d})
\end{align*}
\end{cor}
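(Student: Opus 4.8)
The goal is to compute the equivariant Euler class of the virtual normal bundle $N_F^{\textup{vir}}$ at a point of a composite fixed locus $F = F_{\Gamma_1,\Gamma_2,\vec{u}}$, and the plan is to feed Lemma \ref{lem7} through the definition $N_F^{\textup{vir}} = (\mathcal{T}^1 - \mathcal{T}^2)^{\textup{m}}$, the moving part of the virtual tangent space. The first step is to take the $K$-theory identity of Lemma \ref{lem7},
\begin{align*}
\mathcal{T}^1-\mathcal{T}^2 = \mathcal{T}_1^1+\mathcal{T}_2^1 - \mathcal{T}_1^2-\mathcal{T}_2^2 - (T_D)^k + H^{0}(N_{\mathcal{D}/\mathcal{X}} \tensor N_{\mathcal{D}/\mathcal{Y}}),
\end{align*}
and extract moving parts termwise, using that moving part is additive in $K$-theory. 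So I would identify, term by term, which summands are fixed and which are moving under the $T = \CC^*$ action.

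The key observations, most of which are already recorded in the text just before the corollary, are: (1) the $T$-action on $\Kim^{\sim}$ is trivial, hence $\mathcal{T}_2^1, \mathcal{T}_2^2$ are entirely fixed and contribute nothing to $N_F^{\textup{vir}}$; (2) $(T_D)^k$ is fixed, since $T$ acts trivially on $D$ (the action leaves $D$ pointwise fixed by hypothesis), so it too drops out of the moving part; (3) therefore $(\mathcal{T}^1 - \mathcal{T}^2)^{\textup{m}} = (\mathcal{T}_1^1 - \mathcal{T}_1^2)^{\textup{m}} + (H^0(N_{\mathcal{D}/\mathcal{X}} \tensor N_{\mathcal{D}/\mathcal{Y}}))^{\textup{m}}$. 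By definition $N_{\Gamma_1} = (\mathcal{T}_1^1 - \mathcal{T}_1^2)^{\textup{m}}$, so the first piece contributes exactly $e^T(N_{\Gamma_1})$. For the second piece I would invoke the identification made in the paragraph preceding the corollary: the line bundle with fiber $H^0(N_{\mathcal{D}/\mathcal{X}} \tensor N_{\mathcal{D}/\mathcal{Y}})$ is $p_1^*\mathcal{L}_1 \tensor p_2^*\mathcal{L}_2$, where $\mathcal{L}_1$ is trivial-as-a-bundle with nontrivial $T$-weight and $\mathcal{L}_2$ is nontrivial-as-a-bundle with trivial $T$-weight. Its moving part is thus the whole line bundle (the weight is nonzero), and $e^T(p_1^*\mathcal{L}_1 \tensor p_2^*\mathcal{L}_2) = e^T(\mathcal{L}_1) + e^T(\mathcal{L}_2) = \frac{w}{d} - \frac{\psi}{d} = \frac{w-\psi}{d}$ by the normalizing conventions $e^T(\mathcal{L}_1) = \frac{w}{d}$, $e^T(\mathcal{L}_2) = -\frac{\psi}{d}$.

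Assembling: since Euler class is multiplicative over direct sums in $K$-theory,
\begin{align*}
e^T(N_F^{\textup{vir}}) = e^T\big((\mathcal{T}_1^1-\mathcal{T}_1^2)^{\textup{m}}\big) \cdot e^T\big(p_1^*\mathcal{L}_1 \tensor p_2^*\mathcal{L}_2\big) = e^T(N_{\Gamma_1})\Big(\frac{w-\psi}{d}\Big),
\end{align*}
which is the claimed formula. One should double check that passing to the quotient by $\mathrm{Aut}(\vec{\alpha})$ and the finite factor $\prod \mu_{\alpha_i}/\mu_d$ does not disturb the computation — it does not, since these are finite group quotients and equivariant Euler classes are insensitive to them (they affect the global formula through the $\frac{1}{\mathrm{Aut}(\Gamma_1,\Gamma_2)}$ and $\prod \alpha_i / d$ factors in Theorem \ref{thm:2}, not through $e^T(N_F^{\textup{vir}})$ pointwise).

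The main obstacle, and the only genuinely nontrivial input, is Lemma \ref{lem7} itself, which has already been established; given it, the corollary is essentially bookkeeping about which representations are fixed versus moving. The one subtlety to be careful about is the direction of the sign in $H^0(N_{\mathcal{D}/\mathcal{X}} \tensor N_{\mathcal{D}/\mathcal{Y}})$: it enters Lemma \ref{lem7} with a plus sign (it is part of $\mathcal{T}^1$, a deformation, not an obstruction), so it appears in the numerator of $e^T(N_F^{\textup{vir}})$ — i.e. it multiplies rather than divides — which is exactly why the factor $\frac{w-\psi}{d}$ shows up multiplying $e^T(N_{\Gamma_1})$ rather than in a denominator. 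Tracking that sign correctly, together with the conventions $e^T(\mathcal{L}_1) = \frac{w}{d}$ and $e^T(\mathcal{L}_2) = -\frac{\psi}{d}$, is what makes the final expression come out matching the Graber–Vakil normalization used in Theorem \ref{thm:2}.
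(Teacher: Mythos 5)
Your proposal is correct and matches the paper's own proof in essence: the paper likewise reads off the moving part of the $K$-theory identity from Lemma \ref{lem7}, observes that $(T_D)^k$ is fixed because $D$ is pointwise $T$-fixed and that the $\mathcal{T}_2^j$ are entirely fixed because the torus action on $\Kimc{\Gamma_2}$ is trivial, and concludes that the only remaining moving contribution beyond $N_{\Gamma_1}$ is the line bundle $\mathcal{L}=p_1^*\mathcal{L}_1\otimes p_2^*\mathcal{L}_2$ with $e^T(\mathcal{L})=\tfrac{w-\psi}{d}$. Your additional remarks on the sign of the $H^0$ term and on the finite-group quotients being harmless are correct elaborations of points the paper leaves implicit.
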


\begin{proof} Lemma 7 implies that $N_F^{\textup{vir}} = (\mathcal{T}^1-\mathcal{T}^2)^{\textup{m}}$ differs from the sum of the $(\mathcal{T}_i^1-\mathcal{T}_i^2)^{\textup{m}}$ only by the bundle $\mathcal{L}$, since $T_D$ has trivial action. Furthermore, since the torus action on $\Kimc{\Gamma_2}$ is trivial, the bundles $\mathcal{T}_2^j$ have no moving part.
\end{proof}

From lemma \ref{lem7} it also follows that the map $\Kims{\Gamma_1} \times_{D^k} \Kimc{\Gamma_2} \rightarrow F_{\Gamma_1,\Gamma_2,\vec{u}}$ respects virtual fundamental classes. Let us be more precise. The natural virtual fundamental class of $F_{\Gamma_1,\Gamma_2,\vec{u}}$ is that induced by the fixed part of the perfect obstruction theory for $\Kim$, restricted to $F_{\Gamma_1,\Gamma_2,\vec{u}}$, as explained in the beginning of the section. The natural virtual fundamental class for the fiber product is determined from the diagram

\begin{align*}
\xymatrix{\ar @{} [dr] |{}
\Kims{\Gamma_1} \times_{D^k} \Kimc{\Gamma_2} \ar[d] \ar[r] & \Kims{\Gamma_1} \times \Kimc{\Gamma_2} \ar[d] \\
D^k \ar[r]_v & D^{2k} }
\end{align*}

\noindent as $v^{!}([\Kims{\Gamma_1}]^{\textup{vir}} \times [\Kimc{\Gamma_2}]^{\textup{vir}})$. This is explained in \cite{BF}, section 5. Lemma 5 then implies the corollary: 

\begin{cor}
The virtual fundamental class of $F_{\Gamma_1,\Gamma_2,\vec{u}}$ coincides with the induced virtual fundamental class of the fiber product $\Kims{\Gamma_1} \times_{D^k} \Kimc{\Gamma_2}$. 
\end{cor}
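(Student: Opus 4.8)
The plan is to upgrade the $K$-theoretic identity of Lemma \ref{lem7} to an identification of perfect obstruction theories, and then to apply the compatibility of virtual classes with the fiber-square construction of \cite{BF}.

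First I would decompose the fixed part of the absolute perfect obstruction theory of $\Kim$ along $F = F_{\Gamma_1,\Gamma_2,\vec{u}}$. The essential point is that each identification used in the proof of Lemma \ref{lem7} --- the normalization sequence for $\mathcal{O}_C \to \mathcal{O}_{C_1} \oplus \mathcal{O}_{C_2}$ twisted by $g^{*}T^{\log}_X$, the local-to-global sequence \ref{(6)}, the fiber square of Corollary \ref{cor1}, and the splitting $\mathfrak{aut}(\tau(f)) = \mathfrak{aut}(\tau(f_1)) \oplus \mathfrak{aut}(\tau^{\sim}(f_2))$ --- comes from an actual distinguished triangle of complexes compatible with the maps to the relevant cotangent complexes, not merely from an equality in $K$-theory. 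Taking $T$-fixed parts of these triangles, and using that $T$ fixes $D$ pointwise (so the bundle with fiber $(T_D)^k$ is fixed), that the $T$-action on $\Kimc{\Gamma_2}$ is trivial (so the complex $\mathcal{T}_2^1 - \mathcal{T}_2^2$ is entirely fixed), and that $\mathcal{L} = p_1^{*}\mathcal{L}_1 \tensor p_2^{*}\mathcal{L}_2 \cong H^{0}(N_{\mathcal{D}/\mathcal{X}} \tensor N_{\mathcal{D}/\mathcal{Y}})$ carries a nontrivial $T$-weight and is therefore purely moving, one obtains a quasi-isomorphism
\begin{align*}
(\mathcal{T}^1 - \mathcal{T}^2)^{\textup{f}} \;\cong\; (\mathcal{T}_1^1 - \mathcal{T}_1^2)^{\textup{f}} \,\oplus\, (\mathcal{T}_2^1 - \mathcal{T}_2^2) \,-\, (T_D)^k
\end{align*}
of the fixed part of the obstruction theory of $\Kim$ along $F$. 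Here $(\mathcal{T}_1^1 - \mathcal{T}_1^2)^{\textup{f}}$ is the obstruction theory defining $[\Kims{\Gamma_1}]^{\textup{vir}}$ (the virtual class of the simple fixed locus), and $\mathcal{T}_2^1 - \mathcal{T}_2^2$ is the full obstruction theory defining $[\Kimc{\Gamma_2}]^{\textup{vir}}$, by the analogues for $\Kim^{\sim}$ of Lemma 3 and Corollaries 1, 2 established above.

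Next I would recognize the right-hand side as the obstruction theory produced, via \cite{BF}, section 5, from the fiber square displayed just before the statement of the corollary, in which $v: D^k \to D^{2k}$ is the diagonal, a regular embedding with normal bundle $N_v \cong (T_D)^k$. The compatibility of obstruction theories with such squares identifies $(\mathcal{T}_1^1 - \mathcal{T}_1^2)^{\textup{f}} \oplus (\mathcal{T}_2^1 - \mathcal{T}_2^2) - (T_D)^k$ with an obstruction theory for $\Kims{\Gamma_1} \times_{D^k} \Kimc{\Gamma_2}$ whose virtual class is $v^{!}\big([\Kims{\Gamma_1}]^{\textup{vir}} \times [\Kimc{\Gamma_2}]^{\textup{vir}}\big)$, which is by definition the induced virtual class of the fiber product. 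Finally, the map $\Kims{\Gamma_1} \times_{D^k} \Kimc{\Gamma_2} \times \prod \mu_{\alpha_i}/\mu_d \to F_{\Gamma_1,\Gamma_2,\vec{u}}$ is \'{e}tale, since by the discussion preceding Lemma \ref{lem7} it realizes $F$ as the quotient by the finite group $Aut(\Gamma_1,\Gamma_2)$ of a finite \'{e}tale cover; as virtual classes are compatible with \'{e}tale pullback, the obstruction theories of $F$ and of the fiber product agree under this cover, and the two virtual classes coincide.

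The main obstacle is the first step. Lemma \ref{lem7} is stated only as a $K$-theory identity, whereas the virtual-class assertion requires a genuine quasi-isomorphism of the fixed part of the obstruction theory of $\Kim|_F$ with $(\mathcal{T}_1^1 - \mathcal{T}_1^2)^{\textup{f}} \oplus (\mathcal{T}_2^1 - \mathcal{T}_2^2) - (T_D)^k$ as objects equipped with their maps to the cotangent complexes. One must therefore verify that every exact triangle used in the proof of Lemma \ref{lem7} is compatible with the obstruction-theory morphisms and with the $T$-weight decomposition, and that the summand $(T_D)^k$ removed there is canonically the normal bundle $N_v$ of the diagonal $v$. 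Granting this, the rest is the formal machinery of \cite{BF}.
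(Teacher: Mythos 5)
Your proposal is correct and follows essentially the same route as the paper: identify, via Lemma \ref{lem7}, that the fixed part of the obstruction theory on the fixed locus differs from the sum of the obstruction theories of $\Kims{\Gamma_1}$ and $\Kimc{\Gamma_2}$ exactly by the pullback of $(T_D)^k = N_{D^k/D^{2k}}$, and recognize this as the compatibility condition of \cite{BF}, section 5, which forces the induced virtual classes to agree. The paper's own proof is just this observation stated tersely; your additional care about upgrading the $K$-theoretic statement to triangle-level compatibility and about the finite \'{e}tale cover by $\prod\mu_{\alpha_i}/\mu_d$ fills in details the paper leaves implicit rather than departing from its argument.
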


\begin{proof} The condition that the induced perfect obstruction theory $(\mathcal{T}^1-\mathcal{T}^2)^{\textup{f}}$ differs from the sum of the perfect obstruction theories $(\mathcal{T}_i^1-\mathcal{T}_i^2)^{\textup{f}}$ of the factors by the pullback of $(T_D)^{k} = N_{D^k/D^{2k}}$ under the natural map $\Kims{\Gamma_1} \times_{D^k} \Kimc{\Gamma_2}$ is precisely the compatibility condition on perfect obstruction theories required by section 5 of \cite{BF}, which ensures that the induced virtual fundamental classes coincide. 
\end{proof}

Putting everything together, we obtain 

\begin{thm}{Log Virtual Localization}:
\begin{align*}
[\Kim]^{\textup{vir}} =& [\Kims{\Gamma}]^{\textup{vir}} + \sum_{\Gamma_1,\Gamma_2,\vec{u}} \frac{[\Kims{\Gamma_1} \times_{D^k} \Kimc{\Gamma_2}]^{\textup{vir}}}{Aut(\Gamma_1,\Gamma_2)(\frac{w-\psi}{d})e(N_{\Gamma_1})} = \\
&[\Kims{\Gamma}]^{\textup{vir}} + \sum_{\Gamma_1,\Gamma_2} \frac{\prod{\alpha_i}}{d} \frac{[\Kims{\Gamma_1} \times_{D^k} \Kimc{\Gamma_2}]^{\textup{vir}}}{Aut(\Gamma_1,\Gamma_2)(\frac{w-\psi}{d})e(N_{\Gamma_1})}
\end{align*}
\label{thm:2}
\end{thm}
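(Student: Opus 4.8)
The plan is to assemble the final localization formula by plugging the results of Section 5.3 into the Graber--Pandharipande virtual localization formula recalled in Section 5.1. The starting point is the identity
\begin{align*}
[\Kim]^{\textup{vir}} = \sum_{F} \frac{[F]^{\textup{vir}}}{e^{T}(N_F^{\textup{vir}})},
\end{align*}
where the sum is over all connected components of the fixed locus. By Definitions in Section 5.2, each such $F$ is either the simple locus $\Kims{\Gamma}$ or a composite locus $F_{\Gamma_1,\Gamma_2,\vec u}$; on the simple locus the torus acts trivially on nothing extra, the virtual normal bundle is trivial, and the contribution is exactly $[\Kims{\Gamma}]^{\textup{vir}}$, which gives the first summand.

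Next I would treat the composite loci. For a fixed splitting $(\Gamma_1,\Gamma_2)$ and a fixed collection of units $\vec u \in \prod \mu_{\alpha_i}/\mu_d$, Corollary~9 identifies $[F_{\Gamma_1,\Gamma_2,\vec u}]^{\textup{vir}}$ with the induced virtual class on $\Kims{\Gamma_1} \times_{D^k} \Kimc{\Gamma_2}$ (modulo the $Aut(\Gamma_1,\Gamma_2)$-action, which is accounted for by dividing by its order), and Corollary~8 computes $e^{T}(N_F^{\textup{vir}}) = e^{T}(N_{\Gamma_1})\bigl(\tfrac{w-\psi}{d}\bigr)$. Substituting these into the localization formula, the contribution of each individual $\vec u$ is
\begin{align*}
\frac{1}{Aut(\Gamma_1,\Gamma_2)} \cdot \frac{[\Kims{\Gamma_1} \times_{D^k} \Kimc{\Gamma_2}]^{\textup{vir}}}{(\tfrac{w-\psi}{d})\, e(N_{\Gamma_1})},
\end{align*}
yielding the middle expression in the statement once we sum over $\Gamma_1,\Gamma_2$ and $\vec u$.

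Finally I would perform the sum over $\vec u$. The key observation is that the summand does not depend on $\vec u$ at all: the identification of $F_{\Gamma_1,\Gamma_2,\vec u}$ with the fiber product, the virtual class, and the Euler class of the virtual normal bundle are all independent of the choice of units (the $u_i$ only affect how the log structures on the two pieces are glued, not the underlying geometry or the obstruction theory computed in Section 4). Hence the sum over $\vec u \in \prod_i \mu_{\alpha_i}/\mu_d$ simply multiplies the summand by the cardinality of that set, which is $\bigl(\prod_i \alpha_i\bigr)/d$. This converts the middle expression into the last expression and completes the proof.

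The step I expect to be the main obstacle is the bookkeeping around the units $\vec u$ and the automorphism group $Aut(\Gamma_1,\Gamma_2)$: one must be careful that the $Aut(\Gamma_1,\Gamma_2)$-action on $\Kims{\Gamma_1} \times_{D^k} \Kimc{\Gamma_2} \times \prod \mu_{\alpha_i}/\mu_d$ used in Section 5.3 to present $F_{\Gamma_1,\Gamma_2,\vec u}$ is correctly disentangled from the genuine sum over distinct connected components indexed by $\vec u$, so that neither an overcounting nor an undercounting by a factor of $|Aut(\Gamma_1,\Gamma_2)|$ or $|\mu_d|$ creeps in. Everything else is a direct substitution of Corollaries~8 and~9 into the Graber--Pandharipande formula.
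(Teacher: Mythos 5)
Your overall assembly is the same as the paper's: the proof of Theorem~\ref{thm:2} really is just the Graber--Pandharipande formula combined with the classification of fixed loci into simple and composite ones, the identification of a composite locus with the quotient of $\Kims{\Gamma_1} \times_{D^k} \Kimc{\Gamma_2} \times \prod\mu_{\alpha_i}/\mu_d$ by $Aut(\Gamma_1,\Gamma_2)$, the Euler class computation $e^T(N_F^{\textup{vir}}) = e^T(N_{\Gamma_1})\bigl(\tfrac{w-\psi}{d}\bigr)$ (Corollary 5, not 8), the compatibility of virtual classes (Corollary 6, not 9), and finally replacing the sum over $\vec u$ by the count $\prod\alpha_i/d$, exactly as you do.

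The one genuine flaw is your treatment of the first term. It is not true that ``the virtual normal bundle is trivial'' on the simple locus: $\Kims{\Gamma}$ is the open substack of maps whose target is the unexpanded $X$, the torus acts on it nontrivially, and the actual fixed loci contained in it are proper closed substacks with nontrivial virtual normal bundles (think of fixed loci of ordinary stable maps to $X$). As written, your justification of the term $[\Kims{\Gamma}]^{\textup{vir}}$ would fail. The correct reading, inherited from Graber--Vakil, is that $[\Kims{\Gamma}]^{\textup{vir}}$ is shorthand for the total localization contribution $\sum_{F\subset \Kims{\Gamma}} i_{F*}\bigl([F]^{\textup{vir}}/e^T(N_F^{\textup{vir}})\bigr)$ of all fixed loci lying in the simple locus; because $\Kims{\Gamma}$ is open in $\Kim$, it inherits the perfect obstruction theory and each such $F$ has the same fixed/moving decomposition computed in $\Kims{\Gamma}$ or in $\Kim$ (this is the point of the remark that ``all results of section 4 apply to the simple loci without change''), so this grouped contribution is precisely what the localization expression for the equivariant virtual class of $\Kims{\Gamma}$ would be, and the formula is an identity in localized equivariant Chow groups. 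Your handling of the composite terms, of the $Aut(\Gamma_1,\Gamma_2)$ bookkeeping, and of the sum over $\vec u$ (the contribution is locally constant in $\vec u$, giving the factor $\prod\alpha_i/d$) agrees with the paper.
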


\noindent In the last equality we have replaced the choice of the units $\vec{u}$ with their number $\frac{\prod{\alpha_i}}{d}$. \\

In section 2, we discussed the finite map $\pi: \Kim \rightarrow \Li$ from the moduli space of log stable maps to the moduli space of relative stable maps. In the paper \cite{AMW} it is shown that the pushforward of the virtual fundamental class of $\Kim$ under $\pi$ coincides with the virtual fundamental class of Jun Li's space. We may modify these results to include the maps $\Kims{\Gamma_1} \rightarrow \mathcal{M}_{\Gamma_1}^{\textup{sim}}(X,D)=\mathcal{M}_{\Gamma_1}^{\textup{sim}}$ and $\Kimc{\Gamma_2} \rightarrow \mathcal{M}_{\Gamma_2}^{\sim}$, with the appropriate modifications of the spaces in the setting of relative stable maps as targets. Then, applying $\pi_{*}$ to both sides of the equation in theorem 2 yields the relative virtual localization theorem of Graber-Vakil:

\begin{cor}
The log virtual localization formula becomes the relative virtual localization formula under the functor $\pi_{*}$. 
\end{cor}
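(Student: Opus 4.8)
The plan is to push the log virtual localization formula of Theorem~\ref{thm:2} forward along $\pi_{*}$ and to identify the result, summand by summand, with the relative virtual localization formula of \cite{GV}. The essential input is the comparison theorem of \cite{AMW}, which gives $\pi_{*}[\Kim]^{\textup{vir}} = [\Li]^{\textup{vir}}$, together with its extension --- obtained by the same argument, as indicated above --- to the finite morphisms $\Kims{\Gamma} \rightarrow \mathcal{M}_{\Gamma}^{\textup{sim}}$ and $\Kimc{\Gamma} \rightarrow \mathcal{M}_{\Gamma}^{\sim}$ relating the simple and unrigidified log spaces to their relative analogues. Since all of these morphisms are compatible with the evaluation maps to $D$ and with the $\CC^{*}$-action, they assemble into commuting squares
\begin{align*}
\xymatrix{
\Kims{\Gamma_1} \times_{D^k} \Kimc{\Gamma_2} \ar[d] \ar[r] & \Kims{\Gamma_1} \times \Kimc{\Gamma_2} \ar[d] \\
\mathcal{M}_{\Gamma_1}^{\textup{sim}} \times_{D^k} \mathcal{M}_{\Gamma_2}^{\sim} \ar[r] & \mathcal{M}_{\Gamma_1}^{\textup{sim}} \times \mathcal{M}_{\Gamma_2}^{\sim} }
\end{align*}
lying over the diagonal $v: D^k \rightarrow D^{2k}$.

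First I would dispose of the simple summand. Since $\Kims{\Gamma}$ is an open substack of $\Kim$ carrying the restricted obstruction theory, and $\mathcal{M}_{\Gamma}^{\textup{sim}}$ sits inside $\Li$ in the same way, restricting the comparison of \cite{AMW} gives $\pi_{*}[\Kims{\Gamma}]^{\textup{vir}} = [\mathcal{M}_{\Gamma}^{\textup{sim}}]^{\textup{vir}}$. For a composite summand I would use that proper pushforward commutes with the refined Gysin homomorphism $v^{!}$ that defines the virtual class of the fiber product in the sense of \cite{BF}; applied to the square above this yields
\begin{align*}
\pi_{*}[\Kims{\Gamma_1} \times_{D^k} \Kimc{\Gamma_2}]^{\textup{vir}} = [\mathcal{M}_{\Gamma_1}^{\textup{sim}} \times_{D^k} \mathcal{M}_{\Gamma_2}^{\sim}]^{\textup{vir}}.
\end{align*}
For the denominators, $e^{T}(N_{\Gamma_1})$, $\frac{w}{d}$ and $\frac{\psi}{d}$ are all pulled back equivariantly from the target: by Lemmas~3--5 the virtual normal bundle $N_{\Gamma_1}$ is controlled by $f^{*}T^{\log}_{X}$ and by the deformation theory of $\mathcal{MB}$, both of which correspond under $\pi$ to the analogous data on $\mathcal{M}_{\Gamma_1}^{\textup{sim}}$ by the comparison of obstruction theories in \cite{AMW}; while $\frac{w}{d}$ and $\frac{\psi}{d}$ are the equivariant Euler classes of the smoothing bundles $\mathcal{L}_1$ and $\mathcal{L}_2$ of the node $D$, which are identified with the corresponding smoothing bundles downstairs. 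The projection formula then pushes each composite summand of Theorem~\ref{thm:2} to the corresponding summand of the Graber--Vakil formula.

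It remains to match the combinatorial prefactors. The factor $\frac{\prod \alpha_i}{d}$, which in the log formula replaces the count of gluing units $\vec{u} \in \prod \mu_{\alpha_i}/\mu_{d}$, and the factor $\mathrm{Aut}(\Gamma_1,\Gamma_2)$, the order of the automorphism group of the partition $\vec{\alpha}$, are precisely the prefactors occurring in the relative virtual localization formula of \cite{GV}, where the $\prod \alpha_i$ arises from the node-smoothing gluing map and the $\mu_d$-quotient corresponds to the rigidification convention defining $\mathcal{M}_{\Gamma_2}^{\sim}$. Summing the matched summands over $(\Gamma_1,\Gamma_2)$ reproduces the formula of \cite{GV}. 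The step I expect to be the main obstacle is the second one: verifying that $\pi_{*}$ is compatible with the \emph{virtual}, and not merely coarse, structure of the composite loci. This requires promoting the comparison of \cite{AMW} to the simple and unrigidified settings \emph{$\CC^{*}$-equivariantly}, and checking that it is compatible with the normalization sequence used in Lemma~\ref{lem7} and with the refined Gysin map of \cite{BF} --- equivalently, that the perfect obstruction theories of $\Kims{\Gamma_1} \times_{D^k} \Kimc{\Gamma_2}$ and of $\mathcal{M}_{\Gamma_1}^{\textup{sim}} \times_{D^k} \mathcal{M}_{\Gamma_2}^{\sim}$ correspond under $\pi$ together with their splittings into fixed and moving parts. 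Granting this equivariant refinement of \cite{AMW}, the rest of the argument is formal.
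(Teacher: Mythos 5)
Your overall strategy (push the formula forward summand by summand using the comparison of \cite{AMW}, compatibility of proper pushforward with the Gysin map of \cite{BF} defining the class of the fiber product, and the projection formula) is the same as the paper's, and your treatment of $\pi_{*}$ of the virtual classes is fine; the matching of $e(N_{\Gamma_1})$ is in fact simpler than you make it, since $\pi$ is an isomorphism over the simple locus, so that class is literally unchanged. However, there is a genuine gap exactly at the crux of the corollary: you assert that the node-smoothing bundles $\mathcal{L}_1,\mathcal{L}_2$ (equivalently $\mathcal{L}=p_1^{*}\mathcal{L}_1\otimes p_2^{*}\mathcal{L}_2$, with fiber $H^{0}(N_{\mathcal{D}/\mathcal{X}}\otimes N_{\mathcal{D}/\mathcal{Y}})$) are ``identified with the corresponding smoothing bundles downstairs.'' They are not: the node $\mathcal{D}$ of the log (twisted) target is a $d$-th root of the node $D$, and the correct relation is $\pi^{*}L\cong\mathcal{L}^{\otimes d}$, where $L$ is the smoothing bundle on $\Li$ with fiber $H^{0}(N_{D/X}\otimes N_{D/Y})$ and $e(L)=w-\psi$. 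Consequently $e(\mathcal{L})=\frac{w-\psi}{d}$ is $\frac{1}{d}$ times a class pulled back from $\Li$ --- this is what justifies the notation in Theorem \ref{thm:2} --- and it is precisely this factor of $d$ which, via the projection formula, converts the log prefactor $\frac{\prod\alpha_i}{d}$ and denominator $\frac{w-\psi}{d}$ into the Graber--Vakil prefactor $\prod\alpha_i$ and denominator $w-\psi$.

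With your literal identification the computation does not close: either $e(\mathcal{L})$ would equal $w-\psi$, contradicting the definition of $\frac{w-\psi}{d}$, or each composite summand would push forward to $\frac{\prod\alpha_i}{d}\,\frac{[\mathcal{M}_{\Gamma_1}^{\textup{sim}}\times_{D^k}\mathcal{M}_{\Gamma_2}^{\sim}]^{\textup{vir}}}{Aut(\Gamma_1,\Gamma_2)\,(w-\psi)\,e(N_{\Gamma_1})}$, i.e.\ off from \cite{GV} by a factor of $d$. Relatedly, your claim that $\frac{\prod\alpha_i}{d}$ and $\frac{w-\psi}{d}$ ``are precisely the prefactors occurring in'' the relative virtual localization formula is not correct: the formula of \cite{GV} has $\prod\alpha_i$ and $w-\psi$; the $1/d$ counting the gluing units $\vec{u}$ is a feature of the log formula only, and it must be cancelled by the $d$-th root relation above rather than matched directly. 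The missing ingredient is therefore the comparison $\pi^{*}L\cong\mathcal{L}^{d}$ of smoothing bundles over a composite fixed locus; once that is established, the rest of your argument is essentially the paper's proof.
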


\begin{proof}
By the results of \cite{AMW}, 
\begin{align*}
\pi_{*}[\Kims{\Gamma_1} \times_{D^k} \Kimc{\Gamma_2}]^{\textup{vir}} = [\mathcal{M}_{\Gamma_1}^{\textup{sim}} \times_{D^k} \mathcal{M}_{\Gamma_2}^{\sim}]^{\textup{vir}}
\end{align*}

\noindent  So what remains is to analyze the Euler classes appearing in the denominators of the formula. The term $e(N_\Gamma)$ does not change under $\pi$, as it is the Euler class of the virtual normal bundle of a fixed locus inside the simple locus, and $\pi$ is an isomorphism over the simple locus. On the other hand, let $L$ be the line bundle in $\Li$ parametrizing deformations of the node; its fiber at a point is $H^{0}(C,N_{D/X} \tensor N_{D/Y})$. We denote the Euler class of $L$ by $w-\psi$, as in \cite{GV}. Note that over a fixed locus $F_{\Gamma_1,\Gamma_2,\vec{u}}$, the pullback $\pi^{*}(L) \cong \mathcal{L}^{d}$, where $\mathcal{L}$ is the line bundle of $\Kim$ with fiber $H^{0}(N_{\mathcal{D}/\mathcal{X}} \tensor N_{\mathcal{D}/\mathcal{Y}})$ parametrizing deformations of the node $\mathcal{D}$, which is the $d$-th root of $D$. We thus have $e(\pi^{*}L) = de(\mathcal{L}) = d(\frac{w-\psi}{d})$, which justifies the choice of notation for $e(\mathcal{L})$. The push-pull formula then yields

\begin{align*}
\pi_{*} (\frac{\prod{\alpha_i}}{d} \frac{[\Kims{\Gamma_1} \times_{D^k} \Kimc{\Gamma_2}]^{\textup{vir}}}{Aut(\Gamma_1,\Gamma_2)(\frac{w-\psi}{d})e(N_{\Gamma_1})}) = \prod{\alpha_i} \frac{[\mathcal{M}_{\Gamma_1}^{\textup{sim}} \times_{D^k} \mathcal{M}_{\Gamma_2}^{\sim}]^{\textup{vir}}}{Aut(\Gamma_1,\Gamma_2)(w-\psi)e(N_{\Gamma_1})}
\end{align*}

\noindent Summing over all $\Gamma_1,\Gamma_2$ gives precisely the relative virtual localization formula of \cite{GV}.
\end{proof}

\end{subsection}

\end{section}
\bibliography{refs}{}
\end{document}